\documentclass{amsart}
\usepackage{xypic}
\usepackage{BOONDOX-cal}

\def\spcheck{^\vee}

\newcommand{\VR}{\mathcal{O}}
\newcommand{\invo}{\overline{\rule{2.5mm}{0mm}\rule{0mm}{4pt}}}
\newcommand{\Proj}{\mathbb{P}^1_K}
\newcommand{\E}{\mathcal{E}}
\newcommand{\B}{\mathcal{B}}
\newcommand{\Herm}{\mathcal{H}}
\newcommand{\Q}{\mathcal{Q}}

\newcommand{\Indec}{\mathcal{I}}
\newcommand{\M}{\mathcal{M}}
\newcommand{\N}{\mathcal{N}}
\newcommand{\T}{\mathcal{T}}
\newcommand{\Id}{\operatorname{Id}}
\newcommand{\ovb}{q}
\DeclareMathOperator{\rk}{rk}
\DeclareMathOperator{\GL}{GL}
\DeclareMathOperator{\charac}{char}
\DeclareMathOperator{\Trd}{Trd}
\DeclareMathOperator{\Hom}{Hom}
\DeclareMathOperator{\End}{End}
\DeclareMathOperator{\diag}{diag}
\DeclareMathOperator{\image}{im}
\DeclareMathOperator{\Alt}{Alt}

\DeclareMathOperator{\Sym}{Sym}

\DeclareMathOperator{\coker}{coker}
\DeclareMathOperator{\Spec}{Spec}
\DeclareMathOperator{\ind}{ind}
\DeclareMathOperator{\sw}{\mathsf{sw}}
\DeclareMathOperator{\calEnd}{\mathcal{E\!n\!d}}

\newtheorem{lemma}{Lemma}[section]
\newtheorem{prop}[lemma]{Proposition}
\newtheorem{cor}[lemma]{Corollary}
\newtheorem{thm}[lemma]{Theorem}

\theoremstyle{definition}
\newtheorem{defis}[lemma]{Definitions}
\newtheorem{definition}[lemma]{Definition}

\theoremstyle{remark}
\newtheorem*{remark*}{Remark}
\newtheorem{example}[lemma]{Example}
\newtheorem{remark}[lemma]{Remark}

\hyphenation{an-iso-tropic}

\title{Excellence of function fields of conics} 
\author{A.S. Merkurjev \and J.-P. Tignol}
\address{Department of Mathematics\\
University of California at Los Angeles\\
Los Angeles, California, 90095-1555}
\email{merkurev@math.ucla.edu}
\thanks{The work of the first author has been supported by the NSF
  grant DMS \#1160206.}
\address{ICTEAM Institute\\
Universit\'e catholique de Louvain\\
B-1348 Louvain-la-Neuve, Belgium}
\email{Jean-Pierre.Tignol@uclouvain.be}
\thanks{The second author acknowledges support from the Fonds de la
  Recherche Scientifique--FNRS under grant n$^\circ$~J.0014.15.}
\date{\today}

\begin{document}
\maketitle

A field extension $L/F$ is said to be excellent if for every quadratic
form $q$ over $F$ the anisotropic kernel of the form $q_L$ obtained
from $q$ by scalar extension to $L$ is defined over
$F$. Arason~\cite{Ara} first noticed that function fields of smooth
projective conics have this useful property. As it relies on
Knebusch's Habilitationschrift~\cite{Kn} on symmetric bilinear forms,
Arason's proof 
requires\footnote{Arason's proof can readily be extended to symmetric
  bilinear forms in characteristic~$2$, but this case is uninteresting
  because anisotropic bilinear forms in characteristic~$2$ remain
  anisotropic over the function field of a smooth projective conic by
  \cite[Cor.~3.3]{Lag}.} the hypothesis that $\charac F\neq2$.

Three other proofs have been published; they are due to
Rost~\cite[Corollary]{Rost}, Parimala~\cite[Lemma~3.1]{CTS},
\cite[Proposition~2.1]{PSS}, and Pfister~\cite[Prop.~4]{P}. Pfister's
proof is based on the study of quadratic lattices over the ring of an
affine open set of the conic, while Rost's proof uses ingenious
manipulations of quadratic forms that are isotropic over the function
field. Parimala's proof relies, like Arason's, on vector
bundles over the conic, but it uses the Riemann--Roch theorem instead
of Grothendieck's classification of vector bundles over the projective
line~\cite{Groth}. (Another unpublished proof was obtained by Van Geel
\cite{VG} as 
an application of the Riemann--Roch theorem.) The version of Parimala's
proof in \cite{PSS} has the extra feature to apply to hermitian forms
over division algebras instead of just quadratic forms, but all the
proofs published so far require $\charac F\neq2$.

Our goal in this paper is to prove the excellence of function fields
of smooth projective conics in arbitrary characteristic for hermitian
forms and generalized quadratic forms over division algebras. Our
proof is close in spirit to Arason's original proof: the idea is to
show that the anisotropic kernel of a hermitian or generalized
quadratic form extended to $L$ is the generic fiber of a nondegenerate
hermitian or generalized quadratic form on a vector bundle over the
conic. We then use the classification of these vector bundles to
conclude that the anisotropic kernel is extended from~$F$. Our
approach is completely free of any assumption on the characteristic of
the base field. Therefore, the case of generalized quadratic forms
requires a separate treatment, which is more delicate.

In \S\ref{sec:qf} we revisit the notion of quadratic form as defined
by Tits in \cite{Tits68}. Our goal is to rephrase Tits's definition 
in terms of modules over central simple algebras instead of vector
spaces over division algebras. We thus obtain a notion that is better
behaved under scalar extension. Hermitian forms and generalized
quadratic forms on vector bundles over a conic are discussed in
\S\ref{sec:qfbndl}, and the proof of the excellence result is given
in~\S\ref{sec:excel}.
To make our exposition as elementary as possible, we thoroughly
discuss in an appendix the classification of vector bundles over
smooth projective conics, using a representation of these bundles as
triples consisting of their generic fiber, the stalk at a closed point
$\infty$, and their section over the complement of $\infty$. Thus, we
give an elementary proof of Grothendieck's classification theorem, and
correct Arason's misleading statement\footnote{``Now the proof of the first
  sentence of \cite[Theorem~13.2.2]{Kn} (and the result of
  \cite{Groth} which is cited there) only depends on the projective
  line being a complete regular irreducible curve of genus zero''
  \cite{Ara}.} suggesting that vector bundles over a conic decompose
into line bundles.

We use the following notation throughout: for every linear
endomorphism $\sigma$ such that $\sigma^2=\Id$, we let
\[
  \Sym(\sigma)  = \ker(\Id-\sigma)\quad\text{and}\quad
  \Alt(\sigma)  = \image(\Id-\sigma).
\]
Thus, $\Alt(\sigma)\subset\Sym(-\sigma)$ always, and 
$\Alt(\sigma)=\Sym(-\sigma)$ in characteristic
different from~$2$.

\section{Quadratic forms}
\label{sec:qf}

\subsection{The definition}

Let $A$ be a central simple algebra over an arbitrary field $F$, and
let $\sigma$ be an $F$-linear involution on $A$. Let $M$ be a finitely
generated right $A$-module. The dual module $M^*=\Hom_A(M,A)$ has a
left $A$-module structure given by $(af)(x)=af(x)$ for $a\in A$, $f\in
M^*$, and $x\in M$. Let ${}^\sigma M^*$ be the right $A$-module
defined by
\[
{}^\sigma M^*=\{{}^\sigma f\mid f\in M^*\}
\]
with the operations
\[
{}^\sigma f+{}^\sigma g={}^\sigma(f+g) \quad\text{and}\quad {}^\sigma
f\cdot a ={}^\sigma(\sigma(a)f)
\]
for $a\in A$ and $f$, $g\in M^*$. Identifying ${}^\sigma f$ with the
map $x\mapsto \sigma\bigl(f(x)\bigr)$, we may also consider ${}^\sigma
M^*$ as the $A$-module of additive maps $g\colon M\to A$ such that
$g(xa) = \sigma(a)g(x)$ for $x\in M$ and $a\in A$, i.e., ${}^\sigma
M^*$ is the $A$-module of \emph{$\sigma$-semilinear maps} from $M$ to
$A$. 

Let $B(M)$ be the $F$-space of
sesquilinear forms $M\times M\to A$. Mapping ${}^\sigma f\otimes g$ to
the sesquilinear form $(x,y)\mapsto \sigma(f(x))g(y)$ defines a
canonical isomorphism
\[
{}^\sigma M^*\otimes_A M^*=B(M).
\]
Let $\sw\colon B(M)\to B(M)$ be the $F$-linear map taking a form
$b$ to the form $\sw(b)$ defined by
\[
\sw(b)(x,y)=\sigma\bigl(b(y,x)\bigr).
\]
Thus, $\sw({}^\sigma f\otimes g) = {}^\sigma g\otimes f$ for $f$,
$g\in M^*$. 

\begin{defis}
The space of (generalized) quadratic forms on $M$ is the factor space
\[
Q(M)=B(M)/\Alt(\varepsilon\sw),
\]
where $\varepsilon=1$ if $\sigma$ is orthogonal and $\varepsilon=-1$ if
$\sigma$ is symplectic. For $\delta=\pm1$,
the space of $\delta$-hermitian forms on $M$ is 
\[
H_\delta(M)=\Sym(\delta\sw)\subset B(M).
\]
\end{defis}

To relate this definition of quadratic form to the one given by Tits
in \cite{Tits68}, 
note that $B(M)$ is a free right module of rank~$1$ over $\End_AM$,
for the scalar multiplication defined as follows: for $b\in B(M)$ and
$\varphi\in\End_AM$, 
\begin{equation*}
(b\cdot\varphi)(x,y)=b(x,\varphi(y))\qquad\text{for $x$, $y\in M$.}
\end{equation*}
The pair $(B(M),\varepsilon\sw)$ is a \textit{space of bilinear
  forms for $\End_AM$}, in the sense of \cite[2.1]{Tits68}. With this
choice of space of bilinear forms, the elements of $Q(M)$ as defined
above are exactly the quadratic forms defined in \cite[2.2]{Tits68}.

By definition, the vector spaces $H_\varepsilon(M)$ and $Q(M)$ fit
into the exact sequence
\[
0\to H_\varepsilon(M)\to B(M)\xrightarrow{\Id-\varepsilon\sw} B(M) \to
Q(M)\to 0.
\]
Since $(\Id+\varepsilon\sw)\circ(\Id-\varepsilon\sw)=0$, there is a
canonical ``hermitianization'' map 
\[
\beta\colon Q(M)\to H_\varepsilon(M),
\] 
which associates to each quadratic form
$\ovb=b+\Alt(\varepsilon\sw)$ the $\varepsilon$-hermitian form
\[
\beta(\ovb)=b+\varepsilon\sw(b).
\] 
Thus, by definition the form $\beta(\ovb)$ actually lies in
$\Alt(-\varepsilon\sw)\subset H_\varepsilon(M)$. 

\subsection{Relation with submodules}

For every submodule $N\subset M$, the following exact sequence splits:
\begin{equation}
\label{eq:exseq}
0\to N\to M\to M/N\to 0.
\end{equation}
It yields by duality the split exact sequence
\[
0\to(M/N)^*\to M^*\to N^*\to 0,
\]
which allows us to identify $(M/N)^*$ with the submodule of linear
forms in $M^*$ that vanish on $N$. We thus obtain a canonical split
injective map
\[
B(M/N)={}^\sigma(M/N)^*\otimes_A(M/N)^* \to {}^\sigma M^*\otimes_AM^*
= B(M)
\]
and a canonical split surjective map
\[
B(M)={}^\sigma M^*\otimes_AM^*\to {}^\sigma N^*\otimes_AN^* = B(N).
\]
These canonical maps commute with $\Id-\delta\sw$ for $\delta=\pm1$,
hence they induce canonical maps
\[
H_\delta(M/N)\to H_\delta(M),\qquad H_\delta(M)\to H_\delta(N)\qquad
\text{for $\delta=\pm1$},
\]
and
\[
Q(M/N)\to Q(M),\qquad Q(M)\to Q(N).
\]

\begin{remark}
  \label{rem:Qinj}
  For a fixed splitting of the exact sequence~\eqref{eq:exseq}, the
  corresponding splittings of the injection $B(M/N)\to B(M)$ and the
  surjection $B(M)\to B(N)$ also commute with $\Id-\varepsilon\sw$,
  hence the map $Q(M/N)\to Q(M)$ is split injective and $Q(M)\to Q(N)$
  is split surjective.
\end{remark}

\begin{prop}
  \label{prop:sublagred}
  The canonical embedding $B(M/N)\to B(M)$ identifies $B(M/N)$ with
  the space of sesquilinear forms $b\in B(M)$ such that
  $b(x,y)=b(y,x)=0$ for all $x\in M$ and $y\in N$.
\end{prop}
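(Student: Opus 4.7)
The plan is to prove both inclusions.

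For the forward direction, note that the canonical embedding $B(M/N)\to B(M)$ is induced by the inclusion $(M/N)^*\hookrightarrow M^*$, which identifies $(M/N)^*$ with the submodule of forms in $M^*$ vanishing on $N$. A pure tensor ${}^\sigma f\otimes g$ with $f,g\in(M/N)^*$ corresponds to the sesquilinear form $(x,y)\mapsto\sigma\bigl(f(x)\bigr)g(y)$, which indeed vanishes whenever $x\in N$ or $y\in N$ since then $f(x)=0$ or $g(y)=0$. By $F$-linearity, every element in the image of $B(M/N)\to B(M)$ satisfies the claimed vanishing.

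For the reverse direction, I would fix a splitting of the exact sequence~\eqref{eq:exseq}, yielding a decomposition $M=N\oplus P$ with $P\cong M/N$ as right $A$-modules. Dualizing gives $M^*=N^*\oplus P^*$ (where $P^*$ is identified with $(M/N)^*$), and hence a direct sum decomposition
\[
B(M) = ({}^\sigma N^*\otimes_A N^*) \oplus ({}^\sigma N^*\otimes_A P^*) \oplus ({}^\sigma P^*\otimes_A N^*) \oplus ({}^\sigma P^*\otimes_A P^*),
\]
whose last summand is exactly $B(M/N)$. Writing $b=b_{NN}+b_{NP}+b_{PN}+b_{PP}$ accordingly, each piece $b_{XY}$ depends only on the $X$-component of the first argument and the $Y$-component of the second. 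Now for $n\in N$ and arbitrary $y=y_N+y_P\in M$, the hypothesis $b(n,y)=0$ gives $b_{NN}(n,y_N)+b_{NP}(n,y_P)=0$, and specializing $y$ to lie in $N$, then in $P$, forces $b_{NN}=0$ and $b_{NP}=0$. Symmetrically, $b(x,n)=0$ for all $x\in M$ and $n\in N$ forces $b_{PN}=0$. Thus $b=b_{PP}\in B(M/N)$.

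There is no real obstacle: the proof is essentially bookkeeping with the direct sum decomposition and the bilinearity of~$b$. The only point requiring minor care is to verify that the four-term decomposition of $B(M)$ obtained from the splitting genuinely matches the decomposition of the sesquilinear form $b$ as a sum of four components supported on the appropriate rectangles, so that the two vanishing hypotheses independently kill three of the four summands.
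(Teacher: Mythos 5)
Your proof is correct, but your argument for the hard direction differs from the paper's. You choose a splitting of the exact sequence~\eqref{eq:exseq} (which is legitimate, since $A$ is semisimple, so the sequence does split), write $M=N\oplus P$, and decompose $B(M)={}^\sigma M^*\otimes_A M^*$ into four blocks, using the two vanishing hypotheses to kill the three blocks involving $N$; the only point needing care, which you correctly flag, is that the $P\times P$ block under ``extension by zero'' coincides with the canonical (splitting-independent) image of $B(M/N)$ in $B(M)$. The paper instead argues canonically, without choosing a splitting: it uses the isomorphism~\eqref{eq:canisoB} to view $b$ as $\widehat b\colon M\to{}^\sigma M^*$, observes that the hypotheses force the image of $\widehat b$ to lie in ${}^\sigma(M/N)^*$ and its kernel to contain $N$, so $\widehat b$ factors through a map $M/N\to{}^\sigma(M/N)^*$, which is the sought form in $B(M/N)$. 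Your route is more elementary bookkeeping but depends on a chosen complement; the paper's route is coordinate-free and introduces the adjoint map $\widehat b$, which is reused later (e.g.\ in the nonsingularity arguments of Proposition~\ref{prop:sublagredherm}), so it integrates more smoothly with the rest of the paper. Both establish the statement.
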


\begin{proof}
  It is clear from the definition that the sesquilinear forms in the
  image of $B(M/N)$ vanish in ${}^\sigma M^*\otimes_AN^*$ and in
  ${}^\sigma N^*\otimes_AM^*$, hence they satisfy the stated property.

  For the converse, we use the canonical isomorphism
  \begin{equation}
  \label{eq:canisoB}
  {}^\sigma M^*\otimes_AM^*=\Hom_A(M,{}^\sigma M^*)
  \end{equation}
  mapping ${}^\sigma f\otimes g$ to the homomorphism
  $x\mapsto{}^\sigma f\cdot g(x)$. This isomorphism identifies each 
  sesquilinear form $b\in B(M)$ with the homomorphism
  $\widehat b\colon M\to{}^\sigma M^*$ mapping $x\in M$ to
  $b(\bullet,x)$. If $b(x,y)=b(y,x)=0$ for $x\in M$ and $y\in N$, then
  the image of $\widehat b$ lies in ${}^\sigma(M/N)^*$ and its kernel
  contains $N$. Therefore, $\widehat b$ induces a homomorphism $M/N\to
  {}^\sigma(M/N)^*$, and $b$ is the image of the corresponding
  sesquilinear form in $B(M/N)$.
\end{proof}

\subsection{Sublagrangian reduction of hermitian forms}

Let $\delta=\pm1$. For $h\in H_\delta(M)$ and $N\subset M$ any
$A$-submodule, we 
define the \emph{orthogonal} $N^\perp$ of $N$ by
\[
N^\perp=\{x\in M\mid h(x,y)=0\text{ for all $y\in N$}\}.
\]
The submodule $N$ is said to be a \emph{sublagrangian}, or a
\emph{totally isotropic submodule} of $M$, if $N\subset N^\perp$ or,
equivalently, if $h$ lies in the kernel of the restriction map
$H_\delta(M)\to H_\delta(N)$. The form $h$ is said to be
\emph{isotropic} if $M$ contains a nonzero sublagrangian. It is said
to be \emph{nonsingular} if the corresponding map $\widehat h\colon
M\to {}^\sigma M^*$ under the isomorphism~\eqref{eq:canisoB} is
bijective.

\begin{prop}
  \label{prop:sublagredherm}
  Let $h\in H_\delta(M)$ and let $N\subset M$ be a
  sublagrangian. There is a unique form $h_0\in H_\delta(N^\perp/N)$
  that maps under the canonical map $H_\delta(N^\perp/N)\to
  H_\delta(N^\perp)$ to the restriction of $h$ to $N^\perp$. The form
  $h_0$ is nonsingular if $h$ is nonsingular; it is anisotropic if $N$
  is a maximal sublagrangian.
\end{prop}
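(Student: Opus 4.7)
My plan is to handle the three claims of the proposition in turn. \emph{Existence and uniqueness of $h_0$.} I would apply Proposition~\ref{prop:sublagred} to the pair $N\subset N^\perp$. The restriction $h|_{N^\perp}$ lies in $H_\delta(N^\perp)$ and satisfies $h(x,y)=h(y,x)=0$ for $x\in N^\perp$ and $y\in N$: the first vanishing is the definition of $N^\perp$, and the second follows from the identity $h(y,x)=\delta\sigma(h(x,y))$. By Proposition~\ref{prop:sublagred}, $h|_{N^\perp}$ is therefore the image of a unique $h_0\in B(N^\perp/N)$, and since $B(N^\perp/N)\hookrightarrow B(N^\perp)$ commutes with $\sw$, this uniqueness forces $h_0\in H_\delta(N^\perp/N)$.

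\emph{Nonsingularity.} Assuming $\widehat h\colon M\to{}^\sigma M^*$ is bijective, I would set up a commutative diagram with exact rows, having top row $0\to N\to N^\perp\to N^\perp/N\to 0$ and bottom row $0\to{}^\sigma(M/N^\perp)^*\to{}^\sigma(M/N)^*\to{}^\sigma(N^\perp/N)^*\to 0$ (obtained by dualizing the sequence $0\to N^\perp/N\to M/N\to M/N^\perp\to 0$, which splits since the $A$-modules in sight are semisimple), with verticals induced by $\widehat h$ and the rightmost one being $\widehat{h_0}$. The identification ${}^\sigma(M/N)^*=\ker({}^\sigma M^*\to{}^\sigma N^*)$ combined with the hermitian identity yields $\widehat h^{-1}\bigl({}^\sigma(M/N)^*\bigr)=N^\perp$, so the middle vertical is an isomorphism. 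The same reasoning shows $\widehat h(N)\subset{}^\sigma(M/N^\perp)^*$, and the surjectivity of the leftmost vertical is equivalent to the identity $(N^\perp)^\perp=N$. I would deduce the latter from a length count in the semisimple category of $A$-modules: from the middle vertical being an isomorphism one gets $\ell(N^\perp)=\ell({}^\sigma(M/N)^*)=\ell(M)-\ell(N)$, whence $\ell((N^\perp)^\perp)=\ell(N)$; together with $N\subset(N^\perp)^\perp$ this forces equality. The snake lemma then yields that $\widehat{h_0}$ is an isomorphism.

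\emph{Anisotropy, and main obstacle.} Assuming $N$ is a maximal sublagrangian, I would argue by contradiction: if some nonzero submodule $N'\subset N^\perp/N$ were a sublagrangian for $h_0$, setting $\tilde N'=\pi^{-1}(N')$ with $\pi\colon N^\perp\to N^\perp/N$ the projection would produce a submodule $\tilde N'\supsetneq N$, and for $x,y\in\tilde N'$ one has $h(x,y)=h_0(\bar x,\bar y)=0$ by the very definition of $h_0$, so that $\tilde N'$ is a sublagrangian of $M$ strictly larger than $N$, contradicting maximality. The main obstacle in the whole argument is the identity $(N^\perp)^\perp=N$ in the nonsingular step; it relies on the finite length and semisimplicity of the modules involved, which are available since $A$ is central simple and $M$ is finitely generated.
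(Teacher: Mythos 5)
Your proposal is correct, and two of its three parts (existence/uniqueness via Proposition~\ref{prop:sublagred}, and anisotropy via pulling back a sublagrangian of $h_0$ along $N^\perp\to N^\perp/N$) are essentially the paper's own arguments. Where you genuinely diverge is the nonsingularity step. The paper also starts from the fact that $\widehat h$ restricts to an isomorphism $\varphi\colon N^\perp\to{}^\sigma(M/N)^*$ (your middle vertical), but it then dualizes $\varphi$ and runs the Snake Lemma on a second diagram with rows $0\to N\to M\to{}^\sigma(N^\perp)^*\to0$ and $0\to N^\perp\to{}^\sigma M^*\to{}^\sigma N^*\to0$, obtaining an isomorphism ${}^\sigma(N^\perp/N)^*\to N^\perp/N$ whose inverse is identified with $\delta h_0$; it never invokes $(N^\perp)^\perp=N$, only splitting of the relevant exact sequences, projectivity and double duality. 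You instead work with a single diagram whose right vertical is literally $\widehat{h_0}$, and reduce everything to surjectivity of the left vertical, i.e.\ to $(N^\perp)^\perp=N$, which you prove by a length count; this is valid for the proposition as stated, since finitely generated $A$-modules are semisimple of finite length and duality over the simple algebra $A$ preserves length, and together with $N\subset(N^\perp)^\perp$ the count forces equality. The trade-off is reusability: in the proof of Proposition~\ref{prop:nonsing} the paper reruns this very proof with the $\VR_P\otimes_FA$-modules $\M_P$, $\N_P$ in place of $M$, $N$, where the modules are projective but not semisimple, so your finite-length/duality counting (and hence your route to $(\N_P^\perp)^\perp=\N_P$) does not transfer as written and would need a substitute argument at the stalks; the paper's version goes through verbatim because it only uses projectivity. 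What your route buys in the semisimple setting is brevity and a transparent structural reason ($N=(N^\perp)^\perp$), with the bijectivity of $\widehat{h_0}$ read off directly from the diagram rather than via the paper's ``computation shows'' identification of the snake isomorphism with $\delta h_0$.
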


\begin{proof}
  The existence of $h_0$ readily follows from
  Proposition~\ref{prop:sublagred}. The form $h_0$ is unique because
  the map $B(N^\perp/N)\to B(N^\perp)$ is injective.

  Now, assume $h$ is nonsingular. Since $\widehat h$ carries $N^\perp$
  to ${}^\sigma(M/N)^*$, there is a commutative diagram with exact
  rows:
  \[
  \xymatrix{
  0\ar[r]&N^\perp\ar[r]\ar[d]_{\varphi} & M\ar[r]\ar[d]_{\widehat h} &
  M/N^\perp\ar[r]\ar[d]^{\psi}&0\\
  0\ar[r]& {}^\sigma(M/N)^*\ar[r]& {}^\sigma M^*\ar[r]&
  {}^\sigma N^*\ar[r] &0
  }
  \]
  The map $\psi$ is injective by definition of $N^\perp$, and
  $\widehat h$ is bijective because $h$ is nonsingular, hence
  $\varphi$ is an isomorphism. By duality, $\varphi$ yields an
  isomorphism ${}^\sigma\varphi^*\colon M/N\to
  {}^\sigma(N^\perp)^*$. Composing $\varphi$ with the inclusion
  ${}^\sigma(M/N)^*\subset {}^\sigma M^*$ and ${}^\sigma\varphi^*$
  with the canonical map $M\to M/N$, we obtain maps $\varphi'$,
  $\varphi''$ that fit into the following diagram with exact rows,
  where $i$ is the inclusion:
  \[
  \xymatrix{
  0\ar[r]&N\ar[r]\ar[d]_{i}& M\ar[r]^-{\varphi''}\ar[d]_{\widehat h} &
  {}^\sigma(N^\perp)^*\ar[r]\ar[d]^{{}^\sigma i^*} & 0\\
  0\ar[r]&N^\perp\ar[r]^{\varphi'}& {}^\sigma M^*\ar[r] & {}^\sigma
  N^*\ar[r]& 0
  }
  \]
  Since $\widehat h$ is bijective, the Snake Lemma yields an
  isomorphism ${}^\sigma(N^\perp/N)^*\stackrel{\sim}{\to}
  N^\perp/N$. Computation shows that the inverse of this isomorphism,
  viewed in $B(N^\perp/N)$, is $\sw(h_0)=\delta h_0$. Therefore, $h_0$
  is nonsingular.

  If $L\subset N^\perp/N$ is a sublagrangian for $h_0$, then the
  inverse image $L'\subset N^\perp$ of $L$ under the canonical map
  $N^\perp\to N^\perp/N$ is a sublagrangian for $h$. Therefore, $h_0$
  is anisotropic if $N$ is a maximal sublagrangian.
\end{proof}

When $N$ is a maximal sublagrangian, the anisotropic form $h_0$ is
called an \emph{aniso\-tropic kernel} of $h$. As for quadratic forms (see
Proposition~\ref{prop:uniquanisotker} below), the anisotropic kernel
of a $\delta$-hermitian form is uniquely determined up to isometry.

\subsection{Sublagrangian reduction of quadratic forms}

We say that a quadratic form $\ovb\in Q(M)$
is \emph{nonsingular} if its hermitianized form $\beta(\ovb)$
is nonsingular.\footnote{In \cite{Tits68}, Tits defines non-degenerate
  quadratic forms by a less stringent condition.} The form
$\ovb$ is said to be \emph{isotropic} if there exists a
nonzero submodule $N\subset M$ such that $\ovb$ lies in the kernel of
the restriction map $Q(M)\to Q(N)$; the
submodule $N$ is then said to be \emph{totally isotropic} for
$\ovb$. Clearly, any totally isotropic submodule $N$ for $\ovb$ is
also totally isotropic for the hermitianized form
$\beta(\ovb)$, hence it lies in its orthogonal $N^\perp$ for
$\beta(\ovb)$. 

\begin{prop}
  \label{prop:sublagredquad}
  Let $\ovb\in Q(M)$ and let $N\subset M$ be a
  totally isotropic submodule. There is a unique form $\ovb_0\in
  Q(N^\perp/N)$ that maps under the canonical map $Q(N^\perp/N)\to
  Q(N^\perp)$ to the restriction of $\ovb$ to $N^\perp$. The form
  $\ovb_0$ is nonsingular if $\ovb$ is nonsingular; it is anisotropic
  if $N$ is a maximal totally isotropic submodule.  
\end{prop}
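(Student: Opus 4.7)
The plan is to mimic the proof of Proposition \ref{prop:sublagredherm}, using the hermitianization map $\beta$ to reduce the nonsingularity and anisotropy claims to the hermitian case already treated, while the core work lies in constructing a representative of $\ovb_0$, since $\ovb$ is only defined modulo $\Alt(\varepsilon\sw)$.

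For existence, I would fix a decomposition $N^\perp=N\oplus N'$ of right $A$-modules, which is possible because $A$ is simple Artinian so every $A$-module is projective. This induces a block decomposition
\[
B(N^\perp)=B(N)\;\oplus\;{}^\sigma N^*\otimes_A (N')^*\;\oplus\;{}^\sigma(N')^*\otimes_A N^*\;\oplus\;B(N'),
\]
on which $\varepsilon\sw$ stabilizes the two outer summands and swaps the two mixed ones. Given a representative $b\in B(M)$ of $\ovb$, the aim is to modify its image in $B(N^\perp)$ by an element of $\Alt(\varepsilon\sw)|_{B(N^\perp)}$ so that only the $B(N')$-block survives. Two facts drive the modification. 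First, the $B(N)$-block $b|_{N\times N}$ represents $\ovb|_N=0\in Q(N)$ by total isotropy, hence already lies in $\Alt(\varepsilon\sw)|_{B(N)}$ and can be killed by an alt-correction supported in $B(N)$. Second, the inclusion $N\subset N^\perp$ together with $\varepsilon$-hermiticity of $h:=\beta(\ovb)=b+\varepsilon\sw(b)$ forces $h$ to vanish on $N\times N^\perp$ and $N^\perp\times N$, which when unwound gives exactly the identity $b|_{N\times N'}+\varepsilon\sw\bigl(b|_{N'\times N}\bigr)=0$ needed to recognize the mixed part of $b$ as $(\Id-\varepsilon\sw)(b|_{N'\times N})$. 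After both modifications the resulting representative vanishes on $N\times N^\perp$ and $N^\perp\times N$, so by Proposition \ref{prop:sublagred} applied with $N^\perp$ in the role of $M$ it lies in $B(N^\perp/N)$, and its class defines $\ovb_0\in Q(N^\perp/N)$, mapping to $\ovb|_{N^\perp}$ by construction.

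Uniqueness is immediate from Remark \ref{rem:Qinj} applied with $N^\perp$ in place of $M$: the map $Q(N^\perp/N)\to Q(N^\perp)$ is split injective. For nonsingularity, naturality of $\beta$ under the canonical maps between $Q$'s and $H_\varepsilon$'s implies that $\beta(\ovb_0)\in H_\varepsilon(N^\perp/N)$ maps to $\beta(\ovb)|_{N^\perp}$, hence coincides with the form $h_0$ produced by Proposition \ref{prop:sublagredherm} applied to $h$; since by definition $\ovb_0$ is nonsingular iff $\beta(\ovb_0)=h_0$ is, this reduces to the hermitian statement. For anisotropy, any nonzero totally isotropic submodule $L\subset N^\perp/N$ for $\ovb_0$ has a preimage $L'\subset N^\perp$ with $N\subsetneq L'$; by functoriality of the canonical maps, the image of $\ovb$ in $Q(L')$ equals the image of $\ovb_0|_L=0\in Q(L)$ under $Q(L'/N)\to Q(L')$, hence vanishes, so $L'$ is totally isotropic for $\ovb$, contradicting maximality of $N$.

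The main obstacle is the existence step, which must reconcile two isotropy hypotheses of different natures: total isotropy of $N$ is a condition modulo $\Alt(\varepsilon\sw)$ in $Q(N)$, whereas the orthogonality $N\subset N^\perp$ is a strict vanishing condition for $h$. Translating the latter back to the level of $b$ yields precisely the identity needed to recognize the mixed block of $b$ as lying in $\image(\Id-\varepsilon\sw)$, even though $b$ itself need not vanish on the mixed parts. Once this bookkeeping is done the remaining three claims follow either formally or by transfer via $\beta$ from the hermitian case.
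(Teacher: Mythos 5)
Your proposal is correct and follows essentially the same route as the paper: choosing a splitting $N^\perp=N\oplus N'$ and correcting $b\rvert_{N^\perp}$ by an element of $\image(\Id-\varepsilon\sw)$ (your two block corrections amount to the paper's single form $b_1(x,y)=b(x,\pi(y))-c(\pi(x),\pi(y))$), then invoking Proposition~\ref{prop:sublagred}, Remark~\ref{rem:Qinj}, the transfer to the hermitian case via $\beta$ and Proposition~\ref{prop:sublagredherm}, and preimages of totally isotropic submodules for anisotropy. The four-block bookkeeping is just a tidier presentation of the paper's direct computation, so no further comparison is needed.
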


\begin{proof}
  Let $b\in B(M)$ be a sesquilinear form such that
  $\ovb=b+\Alt(\varepsilon\sw)$. Since $N$ is totally isotropic for
  $\ovb$, there is a form $c\in B(M)$ such that
  \begin{equation}
    \label{eq:sublagredquad1}
    b(x,y)=c(x,y)-\varepsilon\sigma\bigl(c(y,x)\bigr) \qquad\text{for
      all $x$, $y\in N$.}
  \end{equation}
  Because $N^\perp/N$ is a projective module, there is a homomorphism
  $\pi\colon N^\perp\to N$ that splits the inclusion $N\hookrightarrow
  N^\perp$. Define a sesquilinear form $b_1\in B(N^\perp)$ by
  \[
  b_1(x,y)=b\bigl(x,\pi(y)\bigr) - c\bigl(\pi(x),\pi(y)\bigr)
  \qquad\text{for $x$, $y\in N^\perp$.}
  \]
  For $x\in N$ and $y\in N^\perp$, we have
  \begin{align}
    \label{eq:sublagredquad2}
    b(x,y)-b_1(x,y)+\varepsilon\sigma\bigl(b_1(y,x)\bigr) =&\; b(x,y) -
    b\bigl(x,\pi(y)\bigr) + c\bigl(\pi(x),\pi(y)\bigr)\\ & +
    \varepsilon\sigma\bigl(b(y,\pi(x))-c(\pi(y),\pi(x))\bigr). \notag
  \end{align}
  Since $\pi(x)=x$, \eqref{eq:sublagredquad1} yields
  \[
  b\bigl(x,\pi(y)\bigr) = c\bigl(\pi(x),\pi(y)\bigr) -
  \varepsilon\sigma\bigl(c(\pi(y),\pi(x))\bigr),
  \]
  hence three terms cancel on the right side
  of~\eqref{eq:sublagredquad2}, and we have
  \begin{equation}
    \label{eq:sublagredquad3}
    b(x,y)-b_1(x,y)+\varepsilon\sigma\bigl(b_1(y,x)\bigr) = b(x,y)
    +\varepsilon\sigma\bigl(b(y,x)\bigr) = \beta(\ovb)(x,y)=0.
  \end{equation}
  Similarly, for $x\in N$ and $y\in N^\perp$ we have
  \[
  b(y,x)=-\varepsilon\sigma\bigl(b(x,y)\bigr)
  \]
  hence \eqref{eq:sublagredquad3} yields
  \[
  b(y,x)-b_1(y,x)+\varepsilon\sigma\bigl(b_1(x,y)\bigr)=0.
  \]
  Therefore, letting $b\rvert_{N^\perp}$ denote the restriction of $b$
  to $N^\perp$, we may apply Proposition~\ref{prop:sublagred} to get a
  sesquilinear form $b_0\in B(N^\perp/N)$ that maps to
  $b\rvert_{N^\perp}-(\Id-\varepsilon\sw)(b_1)$ in $B(N^\perp)$. Then
  the quadratic form $\ovb_0=b_0+\Alt(\varepsilon\sw)\in Q(N^\perp/N)$
  maps to $\ovb\rvert_{N^\perp}$ in $Q(N^\perp)$. Uniqueness of the
  form $\ovb_0$ is clear since the map $Q(N^\perp/N)\to Q(N^\perp)$ is
  injective (see Remark~\ref{rem:Qinj}).

  Since $N$ is totally isotropic for the hermitianized form
  $\beta(\ovb)\in H_\varepsilon(M)$,
  Proposition~\ref{prop:sublagredherm} yields an
  $\varepsilon$-hermitian form $\beta(\ovb)_0\in
  H_\varepsilon(N^\perp/N)$ that maps to $\beta(\ovb)\rvert_{N^\perp}$
  under the canonical map $H_\varepsilon(N^\perp/N)\to
  H_\varepsilon(N^\perp)$. Since
  $\beta(\ovb)\rvert_{N^\perp}=\beta(\ovb\rvert_{N^\perp})$, we have
  $\beta(\ovb)_0=\beta(\ovb_0)$. If $\ovb$ is nonsingular, then by
  definition $\beta(\ovb)$ is nonsingular. Then $\beta(\ovb)_0$ is
  nonsingular by Proposition~\ref{prop:sublagredherm}, hence $\ovb_0$
  is nonsingular.

  If $L\subset N^\perp/N$ is a totally isotropic submodule for
  $\ovb_0$, then the inverse image $L'\subset N^\perp$ of $L$ under
  the canonical map $N^\perp\to N^\perp/N$ is totally isotropic for
  $\ovb$. Therefore, $\ovb_0$ is anisotropic if $N$ is a maximal
  totally isotropic submodule.
\end{proof}

When $N$ is a maximal totally isotropic submodule of $M$, the quadratic
form $\ovb_0$ is called an \emph{anisotropic kernel} of $\ovb$. The
following result shows that, up to isometry, the anisotropic kernel
does not depend on the choice of the maximal totally isotropic
submodule: 

\begin{prop}
  \label{prop:uniquanisotker}
  All the maximal totally isotropic submodules of $M$ (for a given
  quadratic form $\ovb$) are isomorphic. If the form is nonsingular,
  then for any two isomorphic totally isotropic submodules $N$,
  $N'\subset M$ there is an isometry $\varphi$ of $(M,\ovb)$ such
  that $\varphi(N)=N'$.
\end{prop}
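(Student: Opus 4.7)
The plan is to derive both statements from a Witt-type extension theorem: if $\ovb$ is nonsingular, then any $A$-module isomorphism $\varphi_0\colon N\to N'$ between totally isotropic submodules extends to an isometry of $(M, \ovb)$. The second statement of the proposition is exactly this. For the first, I reduce to the nonsingular case by passing to the quotient by the quadratic radical $R \subseteq M$, defined as the largest $A$-submodule contained in the radical of $\beta(\ovb)$ on which $\ovb$ vanishes in $Q$. One checks that such a largest submodule exists (the family of submodules with this property is stable under sums, using Proposition~\ref{prop:sublagred} and semisimplicity of $A$) and that every maximal totally isotropic submodule of $M$ contains $R$, since $N+R$ is again totally isotropic. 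Passing to $M/R$ and choosing a complement $W$ to the (now anisotropic) residual radical of the induced hermitian form gives a nonsingular restriction of $\ovb$ to $W$, whose maximal totally isotropic submodules are in bijection with those of $M$ containing $R$. Since $A$ is simple, finitely generated $A$-modules are classified up to isomorphism by rank, so the second statement applied to $W$ yields the first.

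I prove the Witt extension theorem by induction on $\rk_A N$. Writing $N = N_1 \oplus P$ with $\rk P = 1$ (possible since $A$ is semisimple) and setting $N_1' := \varphi_0(N_1)$, the inductive hypothesis extends $\varphi_0|_{N_1}$ to an isometry $\psi$ of $M$. Composing with $\psi^{-1}$, I reduce to the case $\varphi_0|_{N_1} = \Id$, where I need an isometry of $M$ fixing $N_1$ pointwise and mapping $P$ to $\varphi_0(P)$. Since both submodules lie in $N_1^\perp$, Proposition~\ref{prop:sublagredquad} applied to the sublagrangian $N_1$ produces a nonsingular $\ovb_1 \in Q(N_1^\perp/N_1)$ in which the images of $P$ and $\varphi_0(P)$ are isomorphic rank-$1$ totally isotropic submodules. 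The task reduces to the rank-$1$ case for $\ovb_1$, plus a standard lifting: choosing a module complement to $N_1^\perp$ in $M$ (isomorphic to ${}^\sigma N_1^*$ by nonsingularity) presents $M$ with a hyperbolic structure relative to $N_1$, from which any isometry of $(N_1^\perp/N_1, \ovb_1)$ lifts to an isometry of $M$ fixing $N_1$ pointwise.

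The rank-$1$ case is the main obstacle. Given nonsingular $(M, \ovb)$ and isotropic elements $x, x' \in M$ with $xA \cong x'A$, nonsingularity of $h := \beta(\ovb)$ yields $y \in M$ with $h(x, y) = 1$, embedding $x$ in a hyperbolic plane, and similarly for $x'$. An isometry of $M$ taking $x$ to $x'$ is assembled from Eichler--Siegel transformations: when $h(x, x')$ is a unit, one writes down an explicit unipotent isometry; when $h(x, x') = 0$, an auxiliary isotropic vector $z$ with both $h(x, z)$ and $h(x', z)$ units allows one to do the transformation in two steps. The main delicacy is characteristic~$2$, where $\ovb \in Q(M)$ carries strictly more data than $h \in H_\varepsilon(M)$: each candidate Eichler transformation must be verified to preserve the full quadratic class in $B(M)/\Alt(\varepsilon\sw)$, not merely the hermitian form. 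This is precisely why Proposition~\ref{prop:sublagredquad} was developed in the quadratic (rather than just hermitian) setting.
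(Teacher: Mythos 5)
The paper does not prove this proposition at all: it simply cites Tits \cite[Prop.~1 and 2]{Tits68}, so any self-contained argument is necessarily a different route. Your outline of the nonsingular case (Witt extension by induction on the length of $N$, sublagrangian reduction to pass to $N_1^\perp/N_1$, Eichler--Siegel transformations for the rank-one case) is the standard and reasonable strategy, but note that its entire difficulty in this paper's setting is the characteristic~$2$ verification that each Eichler transformation preserves the class of $\ovb$ in $B(M)/\Alt(\varepsilon\sw)$ and not just the hermitian form $\beta(\ovb)$ --- and this is exactly the step you defer (``must be verified''). As it stands, the second statement is an outline with the key computation missing rather than a proof; also, existence of $y$ with $h(x,y)=1$ is not available when $xA$ is not a free $A$-module, so the hyperbolic-plane normalization has to be phrased via the pairing with ${}^\sigma(xA)^*$.

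The genuine gap is in your reduction of the first statement to the nonsingular case. The claim that, after killing the quadratic radical $R$ and choosing a complement $W$ to the residual (anisotropic) radical of $\beta(\ovb)$, the maximal totally isotropic submodules of $M$ containing $R$ correspond to those of $W$, is false in characteristic~$2$ --- which is the one case this paper cares about. Take $A=F=\mathbb{F}_2$ (or any field of characteristic~$2$ where $X^2+X+1$ has no root), $\sigma=\Id$, $M=F^3$, and $\ovb$ the class of the upper-triangular form with $q(x,y,z)=x^2+xy+y^2+z^2$. Then $\beta(\ovb)$ has radical $Fe_3$, $q(e_3)=1$ so $R=0$, and $W=Fe_1\oplus Fe_2$ carries a nonsingular anisotropic form, so $W$ has no nonzero totally isotropic submodule; yet $F(e_1+e_3)$ is a nonzero totally isotropic submodule of $M$ (indeed a maximal one). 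Isotropic vectors can mix the $W$- and radical-components, because in characteristic~$2$ the condition $q(w+r)=q(w)+q(r)=0$ does not force $q(w)=q(r)=0$; your bijection would wrongly conclude here that every maximal totally isotropic submodule of $M$ is zero. (In characteristic $\neq2$ the issue disappears, since then the radical of $\beta(\ovb)$ is automatically totally isotropic and equals $R$.) So the degenerate case of the first statement needs a different argument --- this is precisely what Tits's Proposition~1 supplies.
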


\begin{proof}
  See Tits \cite[Prop.~1 and 2]{Tits68}.
\end{proof}

\section{Quadratic forms on $A$-module bundles over a conic}
\label{sec:qfbndl}

Throughout this section, $C$ is a smooth projective conic over an
arbitrary field $F$, which we view as the Severi--Brauer variety of a
quaternion $F$-algebra $Q$. We assume $C$ has no rational point, which
amounts to saying that $Q$ is a division algebra.

\subsection{Vector bundles over $C$}
\label{subsec:vecbndl}

We recall from Roberts~\cite[\S2]{R} or
Biswas--Nagaraj~\cite{BN}\footnote{We are grateful to Van Geel for
  pointing out this reference.} the
description of vector bundles 
over $C$. (See the appendix for an elementary approach to vector
bundles over $C$.) Let $K$ be a separable quadratic extension of $F$ that
splits $Q$. Let $C_K=C\times\Spec K$ be the conic over
$K$ obtained by base change, and let $f\colon C_K\to C$ be the
projection. Since $C_K$ has a rational point, we have $C_K\simeq
\Proj$. By a theorem of Grothendieck, every vector bundle on $C_K$ is
a direct sum of vector bundles $\VR_{\Proj}(n)$ of rank~$1$ (see
Theorem~\ref{thm:Gro}). The vector 
bundle $f_*\bigl(\VR_{\Proj}(n)\bigr)$ is isomorphic to
$\VR_C(n)\oplus\VR_C(n)$ if $n$ is even; it is an indecomposable
vector bundle of rank~$2$ and degree~$2n$ if $n$ is odd
\cite[Theorem~1]{R} (see Corollary~\ref{cor:fO}). Letting
\[
\Indec_C(2n)=f_*\bigl(\VR_{\Proj}(n)\bigr)\qquad\text{for $n$ odd},
\]
it follows that every vector bundle over $C$ decomposes in a unique
way (up to isomorphism) as a direct sum of vector bundles of the type
$\VR_C(n)$ with $n$ even and $\Indec_C(2n)$ with $n$ odd (see
Theorem~\ref{thm:classbnd} or \cite[Theorem~4.1]{BN}). Moreover, we have
\begin{equation}
  \label{eq:EndIndec}
  \End\bigl(\Indec_C(2n)\bigr)\simeq Q \qquad\text{for all odd $n$.}
\end{equation}
(See \eqref{eq:EndIndec2}.) Using the property that $f_*\circ
f^*(\E)\simeq \E\oplus\E$ for every 
vector bundle $\E$ over $C$, and that $f^*\circ
f_*(\E')\simeq\E'\oplus\E'$ for every vector bundle $\E'$ over
$\Proj$ (see Proposition~\ref{prop:ff}), it is easy to see that
\begin{align}
  \label{eq:IndecIndec}
  &\Indec_C(2n)\otimes\Indec_C(2m) \simeq \VR_C(n+m)^{\oplus4}
     \qquad\text{for all odd $n$, $m$, and}\\
  \label{eq:IndecO}
  &\Indec_C(2n)\otimes\VR_C(m) \simeq \Indec_C\bigl(2(n+m)\bigr)  
     \qquad\text{for all $n$ odd and $m$ even.}
\end{align}

For each vector bundle $\E$ over $C$ we write $\E\spcheck =
\mathcal{H\!o\!m}(\E,\VR_C)$ for the dual vector bundle. Since for $n$
even $\VR_C(n)\spcheck$ is a vector bundle of rank~$1$ and
degree~$-n$, we have $\VR_C(n)\spcheck\simeq\VR_C(-n)$ for $n$
even. Similarly, $\Indec_C(2n)\spcheck\simeq\Indec_C(-2n)$ for $n$ odd
(see Corollary~\ref{cor:dual}).

\subsection{$A$-module bundles}

Let $A$ be a central simple algebra over $F$, and let $\E$ be a vector
bundle over $C$. A structure of \emph{right (resp.\ left) $A$-module
  bundle} on $\E$ is defined by a fixed $F$-algebra homomorphism
$A^{\operatorname{op}}\to\End\E$ (resp.\ $A\to\End\E$). Morphisms of
$A$-module bundles are morphisms of vector bundles that preserve the
action of $A$, hence for each $A$-module bundle $\E$ the $F$-algebra
$\End_A\E$ of $A$-module bundle endomorphisms is a subalgebra of the
finite-dimensional $F$-algebra $\End\E$ of vector bundle
endomorphisms. Therefore $\dim_F\End_A\E$ is finite, and by the same
argument as for vector bundles we have a Krull--Schmidt theorem for
$A$-module bundles: every $A$-module bundle over $C$ decomposes into a
direct sum of indecomposable $A$-module bundles, and this
decomposition is unique up to isomorphism. In this subsection, we
obtain information on the indecomposable $A$-module bundles. We
discuss only right $A$-module bundles; the case of left $A$-module
bundles is similar.

For every vector bundle $\E$ over $C$ and every right $A$-module $M$
of finite type, the tensor product over $F$ yields a right $A$-module
bundle $\E\otimes_FM$ with
\begin{equation}
\label{eq:Endotimes}
\End_A(\E\otimes_FM)=(\End\E)\otimes_F(\End_AM).
\end{equation}

\begin{prop}
  \label{prop:Amodirsum}
  Let $\E$ be a right $A$-module bundle over $C$, and let
  $\E^\natural$ be the vector bundle over $C$ obtained from $\E$ by
  forgetting the $A$-module structure. Then
  $\E$ is a direct summand of $\E^\natural\otimes_F A$.
\end{prop}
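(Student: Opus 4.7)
My plan is to construct an explicit right-$A$-linear section of the canonical surjection
\[
\mu\colon \E^\natural\otimes_F A \longrightarrow \E,\qquad e\otimes a\mapsto e\cdot a,
\]
where $\E^\natural\otimes_F A$ is regarded as a right $A$-module bundle via its second tensor factor. The map $\mu$ is a morphism of $A$-module bundles (induced by the action morphism, which is a bundle morphism since $\E$ is an $A$-module bundle), and it is surjective because $e=\mu(e\otimes 1)$. Exhibiting $\E$ as a direct summand of $\E^\natural\otimes_F A$ therefore reduces to producing a right-$A$-linear morphism $s\colon\E\to\E^\natural\otimes_F A$ with $\mu\circ s=\Id_\E$.

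To build $s$, I exploit the separability of the central simple $F$-algebra $A$, which yields a \emph{separability element}, i.e., an element $\sum_i u_i\otimes v_i\in A\otimes_F A$ satisfying
\[
\sum_i u_iv_i=1, \qquad \sum_i au_i\otimes v_i=\sum_i u_i\otimes v_ia \quad\text{for every } a\in A.
\]
In the split case $A=M_n(F)$ one may take $\sum_{i=1}^n e_{i1}\otimes e_{1i}$ in any characteristic, and the general case is obtained by Galois descent from a splitting field, or equivalently from the Azumaya isomorphism $A\otimes_F A^{\mathrm{op}}\cong\End_F(A)$. With such an element fixed, I set
\[
s\colon\E\longrightarrow \E^\natural\otimes_F A,\qquad s(e)=\sum_i (e\cdot u_i)\otimes v_i.
\]
Each summand is the composition of the bundle endomorphism ``action of $u_i$'' on $\E$ with the inclusion $\E^\natural\hookrightarrow \E^\natural\otimes_F A$ along the vector $v_i\in A$, so $s$ is a well-defined morphism of vector bundles.

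The two remaining verifications are routine. The identity $\mu\circ s=\Id_\E$ reduces directly to $\sum_i u_iv_i=1$, and right-$A$-linearity $s(e\cdot a)=s(e)\cdot a$ follows from the bimodule identity $\sum_i au_i\otimes v_i=\sum_i u_i\otimes v_ia$ by applying the $F$-linear map $A\otimes_F A\to\E^\natural\otimes_F A$ that inserts a vector of $\E$ in the first factor via the right $A$-action. I expect the one real technical point to be securing the separability element uniformly in arbitrary characteristic: the naive candidate built from a dual basis for the reduced trace pairing fails when $\mathrm{char}(F)$ divides the degree of $A$, so one must instead descend from the split case or invoke the Azumaya structure. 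Note that the geometry of the conic $C$ plays no role in the argument; it applies verbatim to $A$-module bundles on any $F$-scheme.
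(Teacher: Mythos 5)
Your argument is correct and essentially the same as the paper's: both split the action map $\E^\natural\otimes_F A\to\E$ by the section $e\mapsto\sum_i (e\cdot u_i)\otimes v_i$ built from a separability element of $A\otimes_F A$, verified on sections over affine opens and on stalks. The only difference is the source of that element: you invoke its abstract existence (the route via $A\otimes_F A^{\mathrm{op}}\simeq\End_F(A)$ being semisimple is fine, while the Galois-descent phrasing is looser since separability elements are not canonical), whereas the paper constructs it explicitly in every characteristic as the Goldman element multiplied by $u\otimes1$ with $\Trd_A(u)=1$ --- exactly the characteristic-robust repair of the trace-dual-basis candidate you set aside.
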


\begin{proof}
  Recall from \cite[(3.5)]{KMRT} that $A\otimes_FA$ contains a ``Goldman
  element'' $g=\sum a_i\otimes b_i$ characterized by the following
  property, where $\Trd_A$ denotes the reduced trace of $A$:
  \[
  \sum a_ixb_i=\Trd_A(x)\qquad\text{for all $x\in A$.}
  \]
  The element $g$ satisfies $(a\otimes 1)\cdot g = g\cdot(1\otimes a)$
  for all $a\in A$; see \cite[(3.6)]{KMRT}. Let $u\in A$ be such that
  $\Trd_A(u)=1$, hence $\sum a_iub_i=1$. Since $u\otimes1$ commutes with
  $1\otimes a$ for all $a\in A$, the element 
  \[
  g'=g\cdot(u\otimes1)=\sum a_iu\otimes b_i
  \]
  also satisfies $(a\otimes1)\cdot g'=g'\cdot(1\otimes a)$, hence
  \begin{equation}
  \label{eq:Goldmagic}
  \sum aa_iu\otimes b_i= \sum a_iu\otimes b_ia \qquad\text{for all
  $a\in A$.}
  \end{equation}
  Let $R$ be an arbitrary commutative $F$-algebra, and let
  $Q$ be a right $R\otimes_FA$-module. Let also $Q^\natural$ be the
  $R$-module obtained from $Q$ by forgetting the $A$-module structure.
  Because of \eqref{eq:Goldmagic},
  the map $Q\to Q^\natural\otimes_FA$ defined by $x\mapsto
  \sum(xa_iu)\otimes b_i$ is an $R\otimes_FA$-module
  homomorphism. Since $\sum a_iub_i=1$, this homomorphism is injective
  and split by the multiplication map $Q^\natural\otimes_F A\to
  Q$. This applies in particular to the module of sections of $\E$
  over any affine open set in $C$ and to the stalk of $\E$ at any
  point of $C$, and shows that $\E$ is a direct summand of
  $\E^\natural\otimes_F A$.
\end{proof}

\begin{cor}
  If $\E$ is an indecomposable $A$-module bundle, then all the
  indecomposable vector bundle summands in $\E^\natural$ are
  isomorphic. 
\end{cor}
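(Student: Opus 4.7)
The plan is to combine Proposition~\ref{prop:Amodirsum} with the Krull--Schmidt theorem applied in two different categories: vector bundles over $C$, and $A$-module bundles over $C$ (the latter noted in the paragraph preceding Proposition~\ref{prop:Amodirsum}).

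First, I would invoke Proposition~\ref{prop:Amodirsum} to realize $\E$ as a direct summand of $\E^\natural\otimes_F A$ in the category of right $A$-module bundles. Next, Krull--Schmidt for vector bundles furnishes a decomposition
\[
\E^\natural=\bigoplus_\alpha \mathcal{F}_\alpha^{m_\alpha}
\]
with the $\mathcal{F}_\alpha$ pairwise non-isomorphic indecomposable vector bundles, and tensoring with $A$ yields a decomposition of $A$-module bundles
\[
\E^\natural\otimes_F A=\bigoplus_\alpha (\mathcal{F}_\alpha\otimes_F A)^{m_\alpha}.
\]
Decomposing each $\mathcal{F}_\alpha\otimes_F A$ further into indecomposable $A$-module bundles refines this to a Krull--Schmidt decomposition of $\E^\natural\otimes_F A$ in the $A$-module bundle category.

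Because $\E$ is indecomposable, the uniqueness part of Krull--Schmidt for $A$-module bundles forces $\E$ to be isomorphic to one of the indecomposable summands that appear, hence to some indecomposable $A$-module summand $\mathcal{G}$ of a \emph{single} $\mathcal{F}_{\alpha_0}\otimes_F A$. Forgetting the $A$-action, the underlying vector bundle of $\mathcal{F}_{\alpha_0}\otimes_F A$ is $\mathcal{F}_{\alpha_0}^{\oplus\dim_F A}$, so $\mathcal{G}^\natural$ is a vector bundle summand of $\mathcal{F}_{\alpha_0}^{\oplus\dim_F A}$. By Krull--Schmidt for vector bundles, $\E^\natural\cong\mathcal{G}^\natural\cong\mathcal{F}_{\alpha_0}^{\oplus r}$ for some $r\geq 1$, which is exactly the statement of the corollary.

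I do not anticipate a substantive obstacle: the argument is essentially bookkeeping across two Krull--Schmidt decompositions. The only prerequisite that deserves a moment's thought is the Krull--Schmidt theorem for $A$-module bundles, which rests on the finite-dimensionality of $\End_A\E$ over $F$ and was already invoked in the text preceding Proposition~\ref{prop:Amodirsum}.
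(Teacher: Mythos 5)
Your proposal is correct and follows essentially the same route as the paper: realize $\E$ as a direct summand of $\E^\natural\otimes_F A$ via Proposition~\ref{prop:Amodirsum}, use Krull--Schmidt (for $A$-module bundles) to see that $\E$ is a summand of $\Indec\otimes_F A$ for a single indecomposable vector bundle $\Indec$ occurring in $\E^\natural$, and then forget the $A$-action and apply Krull--Schmidt for vector bundles to $(\Indec\otimes_F A)^\natural\simeq\Indec^{\oplus\dim_F A}$. The paper's proof is merely a more condensed version of the same bookkeeping.
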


\begin{proof}
  Let $\E^\natural=\Indec_1\oplus\cdots\oplus\Indec_r$ be the
  decomposition of $\E^\natural$ into indecomposable vector
  bundles. Then $\E^\natural\otimes A=(\Indec_1\otimes A)\oplus
  \cdots \oplus(\Indec_r\otimes A)$ is a decomposition of
  $\E^\natural\otimes A$ into $A$-module bundles. Since $\E$ is an
  indecomposable direct summand of $\E^\natural\otimes A$, it must be
  isomorphic to a direct summand of one of the $\Indec_i\otimes
  A$. But $(\Indec_i\otimes A)^\natural \simeq \Indec_i^{\oplus d}$,
  where $d=\dim A$, hence $\E^\natural\simeq \Indec_i^{\oplus m}$ for
  some $m$.
\end{proof}

If all the indecomposable direct summands in $\E^\natural$ are
isomorphic to $\Indec$, we say the indecomposable $A$-module bundle
$\E$ is \emph{of type $\Indec$}. Given the classification of
indecomposable vector bundles over $C$ in \S\ref{subsec:vecbndl}, we
may consider indecomposable $A$-module bundles of type $\VR_C(n)$ for
all even~$n$, and of type $\Indec_C(2n)$ for all odd~$n$. They are the
indecomposable $A$-module bundles in the decomposition of
$\VR_C(n)\otimes_FA$ and $\Indec_C(2n)\otimes_FA$ respectively. Since
$A$ is a direct sum of simple $A$-modules, they also are the
indecomposable summands in $\VR_C(n)\otimes_FM$ and
$\Indec_C(2n)\otimes_FM$ for any simple $A$-module $M$.

\begin{prop}
  \label{prop:clasAbndl}
  Let $M$ be a simple $A$-module.
  \begin{enumerate}
  \item[(i)]
  For $n$ even, $\VR_C(n)\otimes_FM$ is the unique indecomposable
  $A$-module bundle of type $\VR_C(n)$ up to isomorphism.
  \item[(ii)]
  For $n$ odd, there is a unique indecomposable $A$-module bundle
  $\E$ of type $\Indec_C(2n)$ up to isomorphism. This
  $A$-module bundle satisfies
  \[
  \Indec_C(2n)\otimes_FM \simeq \E^{\oplus\ell}
  \qquad\text{where $\ell=\frac{2\ind(A)}{\ind(Q\otimes_FA)}$.}
  \]
  \end{enumerate}
\end{prop}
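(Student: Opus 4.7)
My plan is to realize the indecomposable $A$-module bundles as direct summands of $\Indec\otimes_F M$ for a simple right $A$-module $M$, compute their endomorphism algebras by means of \eqref{eq:Endotimes}, and then obtain uniqueness via Krull--Schmidt combined with Proposition~\ref{prop:Amodirsum}. Throughout, one may write $A\simeq M^{\oplus s}$ as a right $A$-module, where $s=\deg(A)/\ind(A)$, so that $\Indec\otimes_F A\simeq(\Indec\otimes_F M)^{\oplus s}$ for every vector bundle $\Indec$.

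For part~(i), since $\VR_C(n)$ is a line bundle over the complete variety $C$ we have $\End\VR_C(n)=F$. Formula~\eqref{eq:Endotimes} then gives $\End_A(\VR_C(n)\otimes_F M)=\End_A M$, which is a division $F$-algebra because $M$ is simple; hence $\VR_C(n)\otimes_F M$ is indecomposable. For uniqueness, any indecomposable $A$-module bundle $\mathcal{F}$ of type $\VR_C(n)$ is, by Proposition~\ref{prop:Amodirsum}, a direct summand of $\mathcal{F}^\natural\otimes_F A\simeq\VR_C(n)^{\oplus m}\otimes_F A\simeq(\VR_C(n)\otimes_F M)^{\oplus ms}$, so the Krull--Schmidt theorem for $A$-module bundles forces $\mathcal{F}\simeq\VR_C(n)\otimes_F M$.

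For part~(ii), I would combine \eqref{eq:Endotimes} with \eqref{eq:EndIndec} to obtain
\[
\End_A(\Indec_C(2n)\otimes_F M)\simeq Q\otimes_F\End_A M,
\]
a central simple $F$-algebra of degree $2\ind(A)$. Its Brauer class is $[Q]+[\End_A M]=[Q\otimes_F A^{\operatorname{op}}]$, and since $Q\simeq Q^{\operatorname{op}}$, this algebra has the same index as $Q\otimes_F A$. Writing the corresponding division algebra as $D'$, we get $Q\otimes_F\End_A M\simeq M_\ell(D')$ with
\[
\ell=\frac{2\ind(A)}{\ind(Q\otimes_F A)}.
\]
A decomposition of $1\in M_\ell(D')$ into $\ell$ conjugate primitive idempotents then produces an isomorphism $\Indec_C(2n)\otimes_F M\simeq\E^{\oplus\ell}$, where $\E$ is an indecomposable $A$-module bundle of type $\Indec_C(2n)$ with $\End_A\E\simeq D'$. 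Uniqueness follows exactly as in~(i): any indecomposable $A$-module bundle of type $\Indec_C(2n)$ is, by Proposition~\ref{prop:Amodirsum}, a summand of some $\Indec_C(2n)^{\oplus m}\otimes_F A\simeq\E^{\oplus ms\ell}$, hence isomorphic to $\E$ by Krull--Schmidt.

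The main delicate point is the Brauer-theoretic identification of $Q\otimes_F\End_A M$, where one must correctly match its index with $\ind(Q\otimes_F A)$ using that $Q$ is isomorphic to its opposite; once this is done and the multiplicity $\ell$ is extracted from a dimension count, the remainder is routine bookkeeping with the Krull--Schmidt theorem.
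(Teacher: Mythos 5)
Your proposal is correct and follows essentially the same route as the paper: the endomorphism computation via \eqref{eq:Endotimes} and \eqref{eq:EndIndec} gives indecomposability in~(i) and the splitting into $\ell$ isomorphic summands in~(ii), while the uniqueness statements come from Proposition~\ref{prop:Amodirsum} combined with Krull--Schmidt, which is exactly how the paper disposes of them in the paragraph preceding the proposition. One harmless slip: for a simple \emph{right} $A$-module $M$ the algebra $\End_AM$ is Brauer-equivalent to $A$ rather than $A^{\operatorname{op}}$, but since $Q\simeq Q^{\operatorname{op}}$ (and $\ind B=\ind B^{\operatorname{op}}$) your degree and index counts, hence the value of $\ell$, are unaffected.
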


Note that $\ind(Q\otimes_FA)$ may take the value $2\ind(A)$, $\ind(A)$
or $\frac12\ind(A)$, hence $\ell=1$, $2$ or $4$.

\begin{proof}
  (i)
  By~\eqref{eq:Endotimes} we have
  \[
  \End_A(\VR_C(n)\otimes_FM)=\bigl(\End\VR_C(n)\bigr)\otimes_F
  (\End_AM) = \End_AM.
  \]
  Since $M$ is simple, $\End_AM$ is a division algebra, hence
  $\VR_C(n)\otimes_FM$ is indecomposable.

  (ii)
  By~\eqref{eq:Endotimes} and \eqref{eq:EndIndec} we have
  \[
  \End_A(\Indec_C(2n)\otimes_FM) = \bigl(\End\Indec_C(2n)\bigr)
  \otimes_F(\End_AM) \simeq Q\otimes_F(\End_AM).
  \]
  This algebra is simple; it is isomorphic to $M_\ell(D)$ for $D$ a
  division algebra, hence $\Indec_C(2n)\otimes_FM$ decomposes into a
  direct sum of $\ell$ isomorphic $A$-module bundles.
\end{proof}

\subsection{Quadratic and Hermitian forms}

We keep the same notation as in the preceding subsections, and assume
$A$ carries an $F$-linear involution $\sigma$ (i.e., an involution of
the first kind). For every right $A$-module bundle $\E$ over $C$, we
define the \emph{dual bundle}
\[
\E^*=\mathcal{H\!o\!m}_{\VR_C\otimes A}(\E,\VR_C\otimes_FA).
\]
The bundle $\E^*$ has a natural structure of left $A$-module
bundle. Twisting the action of $A$ by $\sigma$, we may also consider
the right $A$-module bundle ${}^\sigma\E^*$, and define the vector
bundle
\[
\B(\E)={}^\sigma\E^*\otimes_A\E^*.
\]
As in \S\ref{sec:qf}, there is a switch map
$\sw\colon\B(\E)\to\B(\E)$. The kernel and cokernel of
$\Id\pm\sw$ define vector bundles over $C$. For $\delta=\pm1$, we
let
\[
\Herm_\delta(\E)=\ker(\Id-\delta\sw).
\]
Letting $\varepsilon=1$ if $\sigma$ is orthogonal and
$\varepsilon=-1$ if $\sigma$ is symplectic, we also define
\[
\Q(\E)=\coker(\Id-\varepsilon\sw).
\]

\begin{definition}
  A \emph{sesquilinear form} on the right $A$-module bundle $\E$ is a
  global section of $\B(\E)$. Likewise, a
  \emph{$\delta$-hermitian form} (resp.\ a \emph{quadratic form})
  on $\E$ is a global section of $\Herm_\delta(\E)$ (resp.\
  $\Q(\E)$). We write
  \[
  B(\E)=\Gamma\bigl(\B(\E)\bigr),\quad
  H_\delta(\E)=\Gamma\bigl(\Herm_\delta(\E)\bigr),\quad
  Q(\E)=\Gamma\bigl(\Q(\E)\bigr)
  \]
  for the $F$-vector spaces of sesquilinear, $\varepsilon$-hermitian,
  and quadratic forms respectively.
\end{definition}

\begin{prop}
  \label{prop:qfbndl}
  \begin{enumerate}
  \item[(i)]
  If $\E$ is an indecomposable $A$-module bundle of type $\VR_C(n)$
  with $n$ even, $n>0$, or of type $\Indec_C(2n)$ with $n$ odd, $n>0$,
  then for $\delta=\pm1$
  \[
  B(\E)=H_\delta(\E)=Q(\E)=\{0\}.
  \]
  \item[(ii)]
  If $\E=\VR_C(0)\otimes_FM$ for some right $A$-module $M$, then for
  $\delta=\pm1$
  \[
  B(\E)=B(M),\qquad H_\delta(\E)=H_\delta(M),\qquad Q(\E)=Q(M).
  \]
  \end{enumerate}
\end{prop}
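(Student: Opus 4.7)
The plan is to identify $\B(\E)$, $\Herm_\delta(\E)$, and $\Q(\E)$ explicitly as tensor products of vector bundles over $C$ with the $F$-spaces $B(M)$, $H_\delta(M)$, $Q(M)$ attached to a suitable right $A$-module $M$, and then read off global sections from the known values of $\Gamma$ on line bundles over $C$. The key computational ingredient is the tensor-hom adjunction
\[
\mathcal{H\!o\!m}_{\VR_C\otimes A}(\F\otimes_FM,\VR_C\otimes_FA) \;\simeq\; \F\spcheck\otimes_F\Hom_A(M,A)
\]
for any vector bundle $\F$ over $C$ and any right $A$-module $M$, which gives $(\F\otimes_FM)^* \simeq \F\spcheck\otimes_FM^*$. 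Since $\F\spcheck$ carries no $A$-action, it passes unchanged through $\otimes_{\VR_C\otimes A}$, so
\[
\B(\F\otimes_FM) \;=\; (\F\spcheck\otimes_{\VR_C}\F\spcheck)\otimes_F({}^\sigma M^*\otimes_AM^*) \;=\; (\F\spcheck\otimes_{\VR_C}\F\spcheck)\otimes_FB(M),
\]
with $\sw$ acting as $\Id\otimes\sw_M$. Tensoring with the (flat) $F$-spaces $B(M)$, $H_\delta(M)$, $Q(M)$ is exact, so the same identifications hold for $\Herm_\delta$ and $\Q$.

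For part~(ii), take $\F=\VR_C(0)=\VR_C$ so that $\F\spcheck\otimes\F\spcheck=\VR_C$. The identifications above then read $\B(\E)=\VR_C\otimes_FB(M)$, $\Herm_\delta(\E)=\VR_C\otimes_FH_\delta(M)$, $\Q(\E)=\VR_C\otimes_FQ(M)$; taking global sections and using $\Gamma(\VR_C)=F$ yields the three claimed equalities. For part~(i) in the case of type $\VR_C(n)$ with $n>0$ even, Proposition~\ref{prop:clasAbndl}(i) gives $\E\simeq\VR_C(n)\otimes_FM$ for $M$ a simple right $A$-module; since $\VR_C(n)\spcheck\otimes\VR_C(n)\spcheck\simeq\VR_C(-2n)$, the general formula yields $\B(\E)=\VR_C(-2n)\otimes_FB(M)$ and analogous identities for $\Herm_\delta(\E)$ and $\Q(\E)$. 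Because $-2n<0$, each such bundle has $\Gamma=0$, and all three spaces vanish.

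For $\E$ of type $\Indec_C(2n)$ with $n>0$ odd, Proposition~\ref{prop:clasAbndl}(ii) gives an isomorphism $\Indec_C(2n)\otimes_FM\simeq\E^{\oplus\ell}$ with $M$ simple, and an easy computation shows $\B(\E^{\oplus\ell})\simeq\B(\E)^{\oplus\ell^2}$ (and likewise for $\Herm_\delta$ and $\Q$), so the vanishing on $\E$ follows from the vanishing on $\Indec_C(2n)\otimes_FM$. Taking $\F=\Indec_C(2n)$ in the general formula and using $\Indec_C(2n)\spcheck\simeq\Indec_C(-2n)$ together with \eqref{eq:IndecIndec} gives
\[
\B(\Indec_C(2n)\otimes_FM) \;=\; \bigl(\Indec_C(-2n)\otimes\Indec_C(-2n)\bigr)\otimes_FB(M) \;=\; \VR_C(-2n)^{\oplus4}\otimes_FB(M),
\]
which has no global sections for $n>0$, and similarly for $\Herm_\delta$ and $\Q$. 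The main technical obstacle is checking, once and for all, that the adjunction-based identifications of $(\F\otimes_FM)^*$, $\B$, $\Herm_\delta$, and $\Q$ are canonical and compatible with the switch $\sw$ and with direct-sum decompositions of $A$-module bundles; once that naturality is in place, the degree count produces the vanishing in (i) and the identifications in (ii) immediately.
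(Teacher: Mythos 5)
Your proposal is correct and follows essentially the same route as the paper: identify $\B(\E)$ (and hence $\Herm_\delta(\E)$, $\Q(\E)$) as an explicit vector bundle tensored over $F$ with $B(M)$ (resp.\ $H_\delta(M)$, $Q(M)$), using $(\F\otimes_FM)^*\simeq\F\spcheck\otimes_FM^*$, the reduction $\Indec_C(2n)\otimes_FM\simeq\E^{\oplus\ell}$ from Proposition~\ref{prop:clasAbndl}, the isomorphism \eqref{eq:IndecIndec}, and the values of $\Gamma$ on $\VR_C(-2n)$ and $\VR_C(0)$. If anything, you are slightly more explicit than the paper in noting that the identifications must be compatible with $\sw$ and with exactness of $\otimes_F$ so that the statements for $\Herm_\delta$ and $\Q$ (a kernel and a cokernel) follow, which is a point worth keeping.
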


\begin{proof}
  (i) It suffices to prove $B(\E)=\{0\}$. If
  $\E\simeq\VR_C(n)\otimes_FM$ for some simple $A$-module $M$, then
  $\E^*\simeq\VR_C(n)\spcheck\otimes_FM^*$, hence
  \[
  \B(\E)\simeq \VR_C(n)\spcheck \otimes_F \VR_C(n)\spcheck\otimes_F
  {}^\sigma M^*\otimes_AM^*\simeq \VR_C(-2n)\otimes_FB(M).
  \]
  Since $\Gamma\bigl(\VR_C(-2n)\bigr)=\{0\}$ for $n>0$ (see
  \eqref{eq:dimGamma}), it follows that $B(\E)=\{0\}$.

  If $\E$ is of type $\Indec_C(2n)$ with $n$ odd, then by
  Proposition~\ref{prop:clasAbndl} we have
  \[
  \Indec_C(2n)\otimes_FM\simeq\E^{\oplus\ell} \qquad\text{with
    $\ell=1$, $2$ or $4$,}
  \]
  hence
  \[
  B(\Indec_C(2n)\otimes_FM)\simeq B(\E)^{\oplus\ell^2}.
  \]
  Therefore, it suffices to prove $B(\Indec_C(2n)\otimes_FM)=\{0\}$
  for $n$ odd, $n>0$. As in the previous case we have
  \begin{align*}
  \B(\Indec_C(2n)\otimes_FM) & \simeq \Indec_C(2n)\spcheck\otimes_F
  \Indec_C(2n)\spcheck\otimes_F{}\sigma M^*\otimes_AM^* \\
  & \simeq
  \Indec_C(-2n)\otimes_F\Indec_C(-2n)\otimes_FB(M).
  \end{align*}
  By \eqref{eq:IndecIndec} it follows that
  \[
  \B(\Indec_C(2n)\otimes_FM)\simeq\VR_C(-2n)^{\oplus4}\otimes_FB(M).
  \]
  Since $\Gamma\bigl(\VR_C(-2n)\bigr)=\{0\}$ for $n>0$ (see
  \eqref{eq:dimGamma}), case~(i) of the proposition is proved.

  (ii)
  For $\E=\VR_C(0)\otimes_FM$ we have
  \[
  \B(\E)=\VR_C(0)\spcheck\otimes\VR_C(0)\spcheck\otimes_F{}^\sigma
  M^*\otimes_A M^* = \VR_C(0)\otimes_FB(M).
  \]
  Since $\Gamma\bigl(\VR_C(0)\bigr)=F$, it follows that $B(\E)=B(M)$,
  hence also $H_\delta(\E)=H_\delta(M)$ and $Q(\E)=Q(M)$.
\end{proof}

The property in~(ii) is expressed by saying that sesquilinear,
hermitian, and quadratic forms on $\VR_C(0)\otimes M$ are \emph{extended
  from $A$}.

We define the \emph{degree} of an $A$-module bundle $\E$ as the degree
of the underlying vector bundle $\E^\natural$.

\begin{thm}
  \label{thm:qfbndl}
  Let $\E$ be a right $A$-module bundle with $\deg\E=0$. If
  $\E$ carries a hermitian or quadratic form that is anisotropic on
  the generic fiber then $\E=\VR_C(0)\otimes N$ for some right
  $A$-module $N$.
\end{thm}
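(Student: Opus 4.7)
The plan is to decompose $\E$ via Krull--Schmidt into indecomposable $A$-module bundles and show that every summand must have degree zero, after which Proposition~\ref{prop:clasAbndl}(i) yields the claimed description.

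Write $\E=\bigoplus_\alpha\E_\alpha$ as a direct sum of nonzero indecomposable $A$-module bundles. By the classification recalled in~\S\ref{subsec:vecbndl}, the underlying vector bundle of each $\E_\alpha$ is of the form $\VR_C(n_\alpha)^{\oplus k_\alpha}$ with $n_\alpha$ even, or $\Indec_C(2n_\alpha)^{\oplus k_\alpha}$ with $n_\alpha$ odd. Suppose for contradiction that some $\E_\alpha$ has positive degree, so $n_\alpha>0$. Then Proposition~\ref{prop:qfbndl}(i) gives $H_\delta(\E_\alpha)=\{0\}$ and $Q(\E_\alpha)=\{0\}$. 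Since $\E_\alpha$ is a direct summand of $\E$, there are canonical restriction maps $H_\delta(\E)\to H_\delta(\E_\alpha)$ and $Q(\E)\to Q(\E_\alpha)$, induced by the sheaf-level restriction $\B(\E)\to\B(\E_\alpha)$ which commutes with $\Id-\varepsilon\sw$. The given hermitian or quadratic form on $\E$ therefore restricts to the zero form on $\E_\alpha$ at the bundle level. Because these restriction maps arise from sheaf morphisms and cokernels commute with localisation at the generic point, the generic fibre form restricted to the nonzero submodule $(\E_\alpha)_\eta\subset\E_\eta$ also vanishes. Thus $(\E_\alpha)_\eta$ is a nonzero totally isotropic submodule, contradicting the anisotropy hypothesis.

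Hence no $\E_\alpha$ has positive degree, and the equality $\sum_\alpha\deg\E_\alpha=\deg\E=0$ then forces every summand to have degree zero. Bundles of type $\Indec_C(2n)$ have nonzero degree (since $n$ is odd), so each $\E_\alpha$ is of type $\VR_C(0)$, and Proposition~\ref{prop:clasAbndl}(i) yields $\E_\alpha\simeq\VR_C(0)\otimes_F M_\alpha$ for some simple right $A$-module $M_\alpha$. Setting $N=\bigoplus_\alpha M_\alpha$ gives $\E\simeq\VR_C(0)\otimes_F N$.

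The step most likely to need care is the quadratic-form case of the restriction argument: because $\Q(\E)$ is defined as a cokernel sheaf, one must check that its stalk at the generic point is naturally $Q(\E_\eta)$ and that the bundle-level restriction $\Q(\E)\to\Q(\E_\alpha)$ induces on generic fibres the module-level restriction $Q(\E_\eta)\to Q((\E_\alpha)_\eta)$. Both facts follow from the compatibility of cokernels with localisation, but make the quadratic case slightly more subtle than the hermitian one, where $\Herm_\delta$ is simply a subsheaf of $\B$.
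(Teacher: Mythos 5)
Your proof is correct and follows essentially the same route as the paper: decompose $\E$ by Krull--Schmidt, use Proposition~\ref{prop:qfbndl}(i) together with anisotropy on the generic fiber to rule out summands of type $\VR_C(n)$ or $\Indec_C(2n)$ with $n>0$, and then use $\deg\E=0$ to force every summand to be of type $\VR_C(0)$, concluding via Proposition~\ref{prop:clasAbndl}(i). The only difference is that you spell out the compatibility of the restriction maps with passage to the generic fiber (which the paper leaves implicit), and this added care is sound.
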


\begin{proof}
  Consider the decomposition of $\E$ into a direct sum of
  indecomposable $A$-module bundles. If any of the direct summand is
  of type $\VR_C(n)$ or $\Indec_C(2n)$ with $n>0$, then
  Proposition~\ref{prop:qfbndl}(i) shows that the restriction of any
  hermitian or quadratic form on $\E$ to this summand must be
  $0$. Therefore, if $\E$ carries an anisotropic hermitian or
  quadratic form, then all the summands must be of type $\VR_C(n)$
  with $n\leq0$ or $\Indec_C(2n)$ with $n<0$. But the degree of the
  indecomposable $A$-module bundles of type $\VR_C(n)$ or
  $\Indec_C(2n)$ with $n<0$ is strictly negative. Since $\deg\E=0$,
  all the summands are of type $\VR_C(0)$, hence by
  Proposition~\ref{prop:clasAbndl}(i) they are isomorphic to
  $\VR_C(0)\otimes_FM$ for $M$ a simple right $A$-module. Therefore,
  \[
  \E\simeq (\VR_C(0)\otimes M_1) \oplus\cdots\oplus (\VR_C(0)\otimes
  M_n) = \VR_C(0)\otimes(M_1\oplus\cdots\oplus M_n).
  \qedhere
  \]
\end{proof}

\begin{cor}
  \label{cor:qfbndl}
  If a right $A$-module bundle $\E$ with $\deg\E=0$ carries
  an anisotropic hermitian or quadratic form, then this form is
  extended from~$A$.
\end{cor}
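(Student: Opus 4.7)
The plan is to derive the corollary as an immediate combination of Theorem~\ref{thm:qfbndl} and Proposition~\ref{prop:qfbndl}(ii), since the corollary reassembles exactly their conclusions. The phrase ``carries an anisotropic hermitian or quadratic form'' should be interpreted in the sense used in the theorem, namely anisotropic on the generic fiber of $\E$; this matches the standard meaning for forms on bundles, because the generic fiber is the ordinary $A$-module over the function field on which anisotropy is tested.

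With that interpretation fixed, I would proceed in two lines. First, apply Theorem~\ref{thm:qfbndl} to the $A$-module bundle $\E$: since $\deg\E=0$ and $\E$ carries a hermitian or quadratic form that is anisotropic on the generic fiber, there exists a right $A$-module $N$ such that $\E\simeq\VR_C(0)\otimes_FN$. Second, apply Proposition~\ref{prop:qfbndl}(ii) to $\E\simeq\VR_C(0)\otimes_FN$: this identifies $H_\delta(\E)=H_\delta(N)$ and $Q(\E)=Q(N)$, which is precisely what the paragraph following the proposition calls being \emph{extended from $A$}. Thus the given form on $\E$ comes from a hermitian or quadratic form on the $A$-module $N$, as required.

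There is no substantive obstacle: the content of the corollary is already packaged in the theorem (structural decomposition) and the proposition (identification of global sections). The only tiny point of care is the interpretation of ``anisotropic'' for forms on bundles versus on their generic fibers, which amounts to a convention. Everything else is a direct citation.
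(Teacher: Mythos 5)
Your proposal is correct and is exactly the paper's argument: the corollary is deduced by combining Theorem~\ref{thm:qfbndl} (which gives $\E\simeq\VR_C(0)\otimes_F N$) with Proposition~\ref{prop:qfbndl}(ii) (which identifies the forms on such a bundle as extended from~$A$). Your clarification that anisotropy is tested on the generic fiber matches the paper's convention, so there is nothing missing.
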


\begin{proof}
  This readily follows from Proposition~\ref{prop:qfbndl}(ii) and
  Theorem~\ref{thm:qfbndl}. 
\end{proof}

We complete this section by discussing one case where the condition
$\deg\E = 0$ is necessarily satisfied. 

As for modules (see \eqref{eq:canisoB}), each $\delta$-hermitian form
$h\in H_\delta(\E)$ on a right $A$-module bundle $\E$ yields a morphism of
$A$-module bundles
\[
\widehat h\colon\E\to{}^\sigma\E^*.
\]

\begin{definition}
  The hermitian form $h$ on $\E$ is said to be \emph{nonsingular} if
  the morphism $\widehat h$ is an isomorphism.
\end{definition}

\begin{prop}
  \label{prop:nondeg0}
  If a right $A$-module bundle $\E$ carries a nonsingular
  $\delta$-hermitian form, then $\deg \E=0$.
\end{prop}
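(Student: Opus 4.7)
The plan is to combine the nonsingularity of $h$ with a degree computation based on the classification of $A$-module bundles. Since $\widehat h$ is an isomorphism of $A$-module bundles, it is \emph{a fortiori} an isomorphism of the underlying vector bundles, so $\E^\natural\simeq({}^\sigma\E^*)^\natural$. Twisting the $A$-action by $\sigma$ does not change the underlying sheaf of $\VR_C$-modules, so $({}^\sigma\E^*)^\natural=(\E^*)^\natural$, and hence $\deg\E=\deg\E^*$. It therefore suffices to prove the general identity $\deg\E^*=-\deg\E$ for every $A$-module bundle $\E$ over $C$: combined with $\deg\E=\deg\E^*$, this yields $2\deg\E=0$, whence $\deg\E=0$.

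To establish the identity $\deg\E^*=-\deg\E$, I would reduce via the Krull--Schmidt theorem for $A$-module bundles to the case where $\E$ is indecomposable, since both degree and the functor $(-)^*$ are additive on direct sums. For an indecomposable $A$-module bundle of type $\VR_C(n)$ with $n$ even, Proposition~\ref{prop:clasAbndl}(i) identifies it with $\VR_C(n)\otimes_FM$ for some simple right $A$-module $M$, and the standard compatibility of $\mathcal{H\!o\!m}_{\VR_C\otimes A}(-,\VR_C\otimes_FA)$ with tensor products identifies its dual with $\VR_C(-n)\otimes_FM^*$; the underlying vector bundle is $\VR_C(-n)^{\oplus\dim_FM^*}$, and since $\dim_FM^*=\dim_FM$ its degree is $-n\dim_FM$, opposite to that of $\VR_C(n)\otimes_FM$. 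For an indecomposable summand $\E_0$ of type $\Indec_C(2n)$ with $n$ odd, Proposition~\ref{prop:clasAbndl}(ii) provides an integer $\ell\in\{1,2,4\}$ and a simple right $A$-module $M$ with $\Indec_C(2n)\otimes_FM\simeq\E_0^{\oplus\ell}$. Dualizing and using $\Indec_C(2n)\spcheck\simeq\Indec_C(-2n)$ yields $\Indec_C(-2n)\otimes_FM^*\simeq(\E_0^*)^{\oplus\ell}$; comparing underlying vector bundles via Krull--Schmidt for vector bundles forces the multiplicity of $\Indec_C(-2n)$ in $(\E_0^*)^\natural$ to equal the multiplicity of $\Indec_C(2n)$ in $\E_0^\natural$, and once again $\deg\E_0^*=-\deg\E_0$.

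The main obstacle I anticipate is the $\Indec_C(2n)$ case, where the indecomposable $A$-module bundle $\E_0$ is not itself of the simple tensor form $\VR\otimes M$: one must invoke the uniqueness of the indecomposable $A$-module bundle of each type from Proposition~\ref{prop:clasAbndl}(ii), in both the original and the dualized settings, to pin down the multiplicity of $\Indec_C(-2n)$ in $(\E_0^*)^\natural$ and thereby read off the correct degree.
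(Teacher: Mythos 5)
Your argument is correct and essentially identical to the paper's proof: the paper likewise deduces $\E\simeq{}^\sigma\E^*$ from nonsingularity and proves $\deg{}^\sigma\E^*=-\deg\E$ by reducing, via Proposition~\ref{prop:clasAbndl}, to bundles of the form $\VR_C(n)\otimes_FM$ and $\Indec_C(2n)\otimes_FM$ and using $\VR_C(n)\spcheck\simeq\VR_C(-n)$, $\Indec_C(2n)\spcheck\simeq\Indec_C(-2n)$. Your explicit handling of the multiplicity $\ell$ in the $\Indec_C(2n)$ case is just a spelled-out version of the same reduction.
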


\begin{proof}
  We claim that $\deg{}^\sigma\E^*=-\deg\E$; therefore $\deg\E=0$ when
  $\E\simeq{}^\sigma\E^*$. It suffices to prove the claim for $\E$ an
  indecomposable $A$-module bundle, or indeed by
  Proposition~\ref{prop:clasAbndl}, for $\E$ of the form
  $\VR_C(n)\otimes_FM$ with $n$ even or $\Indec_C(2n)\otimes_FM$ with
  $n$ odd. We have
  \[
  {}^\sigma(\VR_C(n)\otimes_FM)^*=\VR_C(n)\spcheck\otimes_F{}^\sigma
  M^* \simeq \VR_C(-n)\otimes_F{}^\sigma M^*
  \]
  and
  \[
  {}^\sigma(\Indec_C(2n)\otimes_FM)^* = \Indec_C(2n)\spcheck\otimes_F
  {}^\sigma M^*\simeq \Indec_C(-2n)\otimes_F{}^\sigma M^*.
  \]
  The claim follows.
\end{proof}

\section{Excellence}
\label{sec:excel}

We use the same notation as in the preceding sections, and let $L$
denote the function field of the smooth projective conic $C$ over the
arbitrary field $F$. In this section, we prove that $L$ is excellent
for quadratic forms and hermitian forms on right $A$-modules. 

\subsection{Hermitian forms}
\label{subsec:excelherm}

Let $\delta=\pm1$, and let $h$ be a $\delta$-hermitian
form on a finitely generated right $A$-module $M$. Extending scalars
to $L$, we obtain a central simple $L$-algebra $A_L=L\otimes_FA$, a
right $A_L$-module $M_L=L\otimes_FM$, and a 
$\delta$-hermitian form $h_L$ on $M_L$. Scalar extension also yields
the right $A$-module bundle $\M_C=\VR_C(0)\otimes_FM$ over $C$, with
the $\delta$-hermitian form $h_C$ extended from~$h$.

For any $A_L$-submodule $N\subset M_L$, we let $\N$ denote the
intersection of the constant sheaf $N$ on $C$ with $\M_C$. This is a
vector bundle with stack
\[
\N_P=N\cap(\VR_P\otimes_FM)\qquad\text{at each point $P$ of $C$.}
\]
Following the elementary approach to vector bundles developed in the
appendix, the $A$-module bundle $\N$ is defined as follows: choose a
closed point $\infty=\Spec K$ on $C$ for some separable quadratic
extension $K$ of $F$, let $U=C\setminus\{\infty\}$, and define
$\N=(N,N_U,N_\infty)$ where
\[
N_U=N\cap(\VR_U\otimes_FM) \quad\text{and}\quad
N_\infty=N\cap(\VR_\infty\otimes_FM).
\]
The orthogonal of $N_U$ in $\VR_U\otimes_FM$ for the form extended
from $h$ is $N^\perp\cap(\VR_U\otimes_FM)$, and likewise the
orthogonal of $N_\infty$ in $\VR_\infty\otimes_FM$ is
$N^\perp\cap(\VR_\infty\otimes_FM)$, hence the orthogonal
$\N^\perp$ of $\N$ in $M_C$ is the $A$-module bundle
\[
\N^\perp=\bigl(N^\perp, N^\perp\cap(\VR_U\otimes_FM),
N^\perp\cap(\VR_\infty\otimes_FM)\bigr). 
\]
From here on, we assume $N\subset N^\perp$, hence $\N\subset\N^\perp$
and we may consider the quotient $A$-module bundle $\N^\perp/\N$. It
carries a $\delta$-hermitian form $h_0$ obtained by sublagrangian
reduction, see Proposition~\ref{prop:sublagredherm}.

For the excellence proof, the following result is key:

\begin{prop}
  \label{prop:nonsing}
  If $h$ is nonsingular, then the form $h_0$ on $\N^\perp/\N$ is
  nonsingular. 
\end{prop}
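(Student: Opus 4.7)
My approach is to adapt to the bundle setting the diagrammatic argument used in the proof of Proposition~\ref{prop:sublagredherm}, replacing $A$-modules and their duals by locally free coherent $\VR_C\otimes A$-modules on $C$ and their sheaf-theoretic duals. The conclusion will then follow from two applications of the Snake Lemma in this abelian category, once we know that the relevant short exact sequences remain exact after dualization.

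First I would verify that $\N$, $\N^\perp$, $\M_C/\N$, and $\M_C/\N^\perp$ are locally free $A$-module bundles. Each is torsion-free since its stalks embed into the generic fibers $N$, $N^\perp$, $M_L/N$, and $M_L/N^\perp$ respectively. Because $C$ is a smooth curve and $\VR_C\otimes A$ is an Azumaya algebra over $\VR_C$, torsion-freeness forces local projectivity as an $\VR_C\otimes A$-module. Hence the short exact sequences $0\to\N\to\M_C\to\M_C/\N\to0$ and $0\to\N^\perp\to\M_C\to\M_C/\N^\perp\to0$ are locally split, and dualizing and $\sigma$-twisting preserves exactness.

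Applying the Snake Lemma to the commutative diagram
\[
\xymatrix{
0\ar[r] & \N^\perp\ar[r]\ar[d]_{\varphi} & \M_C\ar[r]\ar[d]_{\widehat{h_C}} & \M_C/\N^\perp\ar[r]\ar[d]^{\psi} & 0\\
0\ar[r] & {}^\sigma(\M_C/\N)^*\ar[r] & {}^\sigma\M_C^*\ar[r] & {}^\sigma\N^*\ar[r] & 0
}
\]
in which $\varphi$ and $\psi$ are induced by $\widehat{h_C}$ forces $\varphi$ to be an isomorphism: $\widehat{h_C}$ is an isomorphism (base-changed from the nonsingular form $\widehat h$ on $M$), and $\psi$ is injective stalkwise by the definition of $\N^\perp$. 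Dualizing $\varphi$ and using biduality for locally free sheaves then produces an isomorphism ${}^\sigma\varphi^*\colon\M_C/\N\xrightarrow{\sim}{}^\sigma(\N^\perp)^*$, and composing with the projection $\M_C\to\M_C/\N$ gives a surjection $\varphi''\colon\M_C\to{}^\sigma(\N^\perp)^*$ with kernel $\N$. A second Snake Lemma applied to
\[
\xymatrix{
0\ar[r] & \N\ar[r]\ar[d]_{i} & \M_C\ar[r]^-{\varphi''}\ar[d]_{\widehat{h_C}} & {}^\sigma(\N^\perp)^*\ar[r]\ar[d]^{{}^\sigma i^*} & 0\\
0\ar[r] & \N^\perp\ar[r] & {}^\sigma\M_C^*\ar[r] & {}^\sigma\N^*\ar[r] & 0
}
\]
then yields an isomorphism $\N^\perp/\N\xrightarrow{\sim}{}^\sigma(\N^\perp/\N)^*$, which one identifies with $\widehat{h_0}$ (up to the sign $\delta$) via the local computation carried out at the generic fiber in the proof of Proposition~\ref{prop:sublagredherm}.

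The main technical obstacle I anticipate is the local projectivity of $\M_C/\N$ and $\M_C/\N^\perp$ as $\VR_C\otimes A$-modules, required to keep the dual sequences exact so that the two diagrams above live in the abelian category of locally free $\VR_C\otimes A$-modules. Once this is secured via the Azumaya/torsion-free reduction sketched above, the two Snake Lemma diagrams transcribe the module-theoretic argument verbatim, and the nonsingularity of $h_0$ drops out.
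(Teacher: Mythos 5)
Your proposal is correct and takes essentially the same route as the paper: the paper verifies nonsingularity on the generic fiber and then stalkwise at each closed point $P$, using Lemma~\ref{lem:proj} (finitely generated torsion-free modules over $\VR_P\otimes_FA$ are projective --- the same separability/Goldman-element fact you invoke via the Azumaya property of $\VR_C\otimes A$) to rerun the two Snake-Lemma diagrams of Proposition~\ref{prop:sublagredherm} with $\M_P$ and $\N_P$ in place of $M$ and $N$. Your globally sheafified version of that diagram chase is the same argument in different packaging.
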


The proof uses the following lemma:

\begin{lemma}
  \label{lem:proj}
  Let $R$ be an $F$-algebra that is a Dedekind ring. Every finitely
  generated right $(R\otimes_FA)$-module that is torsion-free as an
  $R$-module is projective.
\end{lemma}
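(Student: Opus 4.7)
The plan is to combine two classical facts: first, finitely generated torsion-free modules over a Dedekind domain are projective; and second, the Goldman-element trick already used in the proof of Proposition~\ref{prop:Amodirsum}, which produces, for any commutative $F$-algebra $R$ and any right $R\otimes_FA$-module $P$, a canonical $R\otimes_FA$-linear splitting of the multiplication map $P^\natural\otimes_FA\to P$, where $P^\natural$ denotes $P$ viewed merely as an $R$-module.

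First I would observe that if $P$ is a finitely generated right $R\otimes_FA$-module, then $P$ is also finitely generated as an $R$-module, because $R\otimes_FA$ is finitely generated over~$R$ (the $F$-algebra $A$ being finite-dimensional). Assuming $P$ is torsion-free as an $R$-module and that $R$ is a Dedekind ring, the structure theorem for finitely generated modules over a Dedekind domain then shows that $P^\natural$ is projective over~$R$, hence a direct summand of some free $R$-module $R^{(I)}$.

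Next I would tensor this splitting with~$A$ over~$F$: since $R^{(I)}\otimes_FA\simeq(R\otimes_FA)^{(I)}$ as $R\otimes_FA$-modules, the $R\otimes_FA$-module $P^\natural\otimes_FA$ is a direct summand of a free $R\otimes_FA$-module and is therefore projective over $R\otimes_FA$. Finally, invoking Proposition~\ref{prop:Amodirsum} (whose proof applies verbatim to any commutative $F$-algebra in place of the structure sheaf used there), $P$ is an $R\otimes_FA$-linear direct summand of $P^\natural\otimes_FA$, and thus is itself projective as an $R\otimes_FA$-module.

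I do not expect any serious obstacle: the only delicate point is to notice that the Goldman element construction of Proposition~\ref{prop:Amodirsum} is stated in precisely the generality required here (an arbitrary commutative $F$-algebra $R$ and an $R\otimes_FA$-module), so the proof amounts to assembling the Dedekind-domain structure theorem with an argument already carried out earlier in the paper.
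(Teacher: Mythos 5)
Your proposal is correct and follows essentially the same route as the paper: the paper also deduces projectivity of $Q^\natural$ over the Dedekind ring $R$ from finite generation and torsion-freeness, notes that $Q^\natural\otimes_FA$ is then a projective $(R\otimes_FA)$-module, and invokes the Goldman-element splitting from the proof of Proposition~\ref{prop:Amodirsum} to exhibit $Q$ as a direct summand of $Q^\natural\otimes_FA$. Your extra remark that $Q$ is finitely generated over $R$ because $A$ is finite-dimensional is a detail the paper leaves implicit, but nothing more.
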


\begin{proof}
  Let $Q$ be a finitely generated right $(R\otimes_FA)$-module, and
  let $Q^\natural$ be the $R$-module obtained from $Q$ by forgetting
  the $A$-module structure. Recall from the proof of
  Proposition~\ref{prop:Amodirsum} that $Q$ is a direct summand of
  $Q^\natural\otimes_FA$. The $R$-module $Q^\natural$ is projective
  because it is finitely generated and torsion-free, hence
  $Q^\natural\otimes_FA$ is a projective $(R\otimes_FA)$-module. The
  lemma follows.
\end{proof}

\begin{proof}[Proof of Proposition~\ref{prop:nonsing}]
  Assume $h$ is nonsingular. Proposition~\ref{prop:sublagredherm}
  shows that the form $h_0$ is nonsingular on the generic fiber
  $N^\perp/N$ of $\N^\perp/\N$. We show that it is nonsingular on the
  stalk at each closed point of $C$.

  Fix some closed point $P$ of $C$, and let $\M_P=\VR_P\otimes_FM$ and
  $A_P=\VR_P\otimes_FA$. The right $A_P$-module $\M_P/\N_P$ is
  finitely generated and torsion-free as an $\VR_P$-module, hence it
  is projective by Lemma~\ref{lem:proj}, and the following exact
  sequence splits:
  \[
  0\to\N_P\to \M_P\to \M_P/\N_P\to 0.
  \]
  Lemma~\ref{lem:proj} also applies to show $\N_P^\perp/\N_P$ and
  $\M_P/\N_P$ are projective $A_P$-modules. On the other hand, the map
  $\widehat h_P=\Id\otimes\widehat h\colon\M_P\to{}^\sigma\M_P^*$ is
  bijective because $h$ is nonsingular. Substituting $\M_P$ for $M$
  and $\N_P$ for $N$ in the proof of
  Proposition~\ref{prop:sublagredherm}, we see that the arguments in
  that proof establish that the induced map $\N_P^\perp/\N_P\to
  {}^\sigma(\N_P^\perp/\N_P)^*$ is bijective.
\end{proof}

The excellence of $L$ for hermitian forms readily follows:

\begin{thm}
  \label{thm:excelherm}
  Let $h$ be a nonsingular $\delta$-hermitian form $(\delta=\pm1)$ on
  a finitely generated right $A$-module. The anisotropic kernel of
  $h_L$ is extended from $A$.
\end{thm}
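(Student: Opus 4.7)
The plan is to realize the anisotropic kernel of $h_L$ as the generic fiber of a nonsingular $\delta$-hermitian form on a right $A$-module bundle of degree zero over $C$, and then invoke Corollary~\ref{cor:qfbndl} to conclude it is extended from $A$. Concretely, I would first fix a maximal sublagrangian $N\subset M_L$ for $h_L$, form the $A$-module bundle $\N\subset\M_C=\VR_C(0)\otimes_FM$ as described in Subsection~\ref{subsec:excelherm}, and consider the quotient $A$-module bundle $\N^\perp/\N$ together with its induced $\delta$-hermitian form $h_0$. By construction, the generic fiber of $\N^\perp/\N$ is $N^\perp/N\subset M_L$, and the generic fiber of $h_0$ coincides with the sublagrangian reduction of $h_L$ along $N$ produced by Proposition~\ref{prop:sublagredherm}; since $N$ is a maximal sublagrangian of the nonsingular form $h_L$, that reduction is precisely the anisotropic kernel of $h_L$.

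Now I would apply the two key bundle-level results already established. Proposition~\ref{prop:nonsing} tells us that $h_0$ is nonsingular on $\N^\perp/\N$, and Proposition~\ref{prop:nondeg0} then forces $\deg(\N^\perp/\N)=0$. Combined with the fact that the generic fiber of $h_0$ is anisotropic, Corollary~\ref{cor:qfbndl} yields a right $A$-module $N_0$ and a $\delta$-hermitian form $h'$ on $N_0$ such that $(\N^\perp/\N,h_0)\simeq (\VR_C(0)\otimes_FN_0,h')$. Taking generic fibers, the anisotropic kernel of $h_L$ is isometric to $h'\otimes_FL$, which is exactly the assertion that it is extended from $A$.

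Essentially all the difficulty has been absorbed into the preliminary results. The nontrivial step is Proposition~\ref{prop:nonsing}, which upgrades the generic-fiber nonsingularity furnished by Proposition~\ref{prop:sublagredherm} to nonsingularity stalkwise at every closed point, using Lemma~\ref{lem:proj} to secure the projectivity needed to mimic the module-theoretic argument pointwise. Once that is in hand, the remaining bookkeeping is routine: one only has to check that the bundle $\N^\perp$ built in Subsection~\ref{subsec:excelherm} computes the orthogonal of $\N$ fiberwise (which is built into its definition), and that the fiberwise sublagrangian reduction agrees with the module-theoretic one on the generic fiber.
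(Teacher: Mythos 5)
Your argument is correct and follows the paper's own proof essentially verbatim: fix a maximal sublagrangian $N\subset M_L$, pass to the bundle $\N^\perp/\N$ with the form $h_0$ obtained by sublagrangian reduction, then apply Proposition~\ref{prop:sublagredherm} (anisotropy of the generic fiber), Proposition~\ref{prop:nonsing} (nonsingularity on the bundle), Proposition~\ref{prop:nondeg0} (degree zero), and Corollary~\ref{cor:qfbndl} (extended from $A$). Nothing is missing; the identification of the generic fiber of $h_0$ with the anisotropic kernel of $h_L$ is exactly how the paper concludes as well.
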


\begin{proof}
  We apply the discussion above with $N\subset M_L$ a maximal
  sublagrangian. The induced $\delta$-hermitian form $h_0$ on
  $N^\perp/N$ is anisotropic by Proposition~\ref{prop:sublagredherm},
  and it is the generic fiber of a nonsingular $\delta$-hermitian form
  on the $A$-module bundle
  $\N^\perp/\N$ by
  Proposition~\ref{prop:nonsing}. Proposition~\ref{prop:nondeg0}
  yields 
  $\deg(\N^\perp/\N)=0$, hence Corollary~\ref{cor:qfbndl} shows that
  $h_0$ is extended from $A$.
\end{proof}

\subsection{Quadratic forms}
\label{subsec:excelquad}

We use the same notation as in \S\ref{subsec:excelherm}: $M$ is a
finitely generated right $A$-module and $\M_C=\VR_C(0)\otimes_FM$ is
the right $A$-module bundle obtained from $M$ by scalar extension,
with generic fiber $M_L$. We now consider a nonsingular quadratic form
$\ovb$ on 
$M$, and the extended quadratic form $\ovb_C$ on $\M_C$, with generic
fiber $\ovb_L$. Let $N\subset M_L$ be a maximal totally isotropic
subspace for $\ovb_L$. This subspace is totally isotropic (but maybe
not a maximal sublagrangian) for the hermitianized form
$\beta(\ovb_L)$, hence it lies 
in its orthogonal $N^\perp$ for $\beta(\ovb_L)$. By
Proposition~\ref{prop:sublagredquad}, $\ovb_L$ induces a nonsingular
quadratic 
form $\ovb_0$ on $N^\perp/N$, which is the anisotropic kernel of
$\ovb_L$. To prove that $L$ is excellent, we need to show that
$\ovb_0$ is extended from $A$.

The proof follows the same pattern as for
Theorem~\ref{thm:excelherm}. We consider the $A$-module bundles $\N$,
$\N^\perp$, and $\N^\perp/\N$ as in \S\ref{subsec:excelherm}. As
observed in the proof of Proposition~\ref{prop:nonsing}, for each
closed point $P$ of $C$ the $A_P$-modules $\M_P/\N_P$, $\M_P/\N_P^\perp$,
and $\N_P^\perp/\N_P$ are projective. Substituting $\M_P$ for $M$ and
$\N_P$ for $N$ in the proof of Proposition~\ref{prop:sublagredquad},
we see that the form $q_0$ is the generic fiber of a nonsingular
quadratic form $q_0$ on $\N_P^\perp/\N_P$. We have
$\deg(\N^\perp/\N)=0$ by Proposition~\ref{prop:nondeg0}, and since
$\ovb_0$ is anisotropic on $N^\perp/N$ it is extended from~$A$ by
Corollary~\ref{cor:qfbndl}. We have thus proved:

\begin{thm}
  \label{thm:excelquad}
  Let $\ovb$ be a nonsingular quadratic form on a finitely generated
  right $A$-module. The anisotropic kernel of $\ovb_L$ is extended
  from~$A$. 
\end{thm}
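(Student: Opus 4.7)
The plan is to follow the same blueprint as the proof of Theorem \ref{thm:excelherm}, replacing the hermitian sublagrangian reduction of Proposition \ref{prop:sublagredherm} by the quadratic version in Proposition \ref{prop:sublagredquad}. Concretely, I pick a maximal totally isotropic submodule $N\subset M_L$ for $\ovb_L$; Proposition \ref{prop:sublagredquad} then produces a nonsingular anisotropic quadratic form $\ovb_0\in Q(N^\perp/N)$, where the orthogonal is taken with respect to the hermitianized form $\beta(\ovb_L)$, and this $\ovb_0$ is the anisotropic kernel of $\ovb_L$. As in \S\ref{subsec:excelherm}, I build $A$-module bundles $\N$ and $\N^\perp$ on $C$ by intersecting the constant sheaves $N$ and $N^\perp$ with $\M_C$, and I form the quotient $A$-module bundle $\N^\perp/\N$, whose generic fiber is $N^\perp/N$.

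The crucial step is to realize $\ovb_0$ as the generic fiber of a nonsingular quadratic form on $\N^\perp/\N$. This is the quadratic analogue of Proposition \ref{prop:nonsing}, and I expect it to be the main obstacle, because for quadratic forms one must actually construct a representative sesquilinear form at each stalk rather than merely verify bijectivity of a single bundle map. Fix a closed point $P\in C$ and set $\M_P=\VR_P\otimes_FM$, $A_P=\VR_P\otimes_FA$. Since $\VR_P$ is a DVR, Lemma \ref{lem:proj} shows that $\M_P/\N_P$, $\M_P/\N_P^\perp$, and $\N_P^\perp/\N_P$ are all projective $A_P$-modules, being finitely generated and torsion-free over $\VR_P$. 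In particular, projectivity of $\N_P^\perp/\N_P$ furnishes a splitting $\pi\colon\N_P^\perp\to\N_P$ of the inclusion, and this is precisely the ingredient required to run the argument in the proof of Proposition \ref{prop:sublagredquad} with $\M_P$ in place of $M$ and $\N_P$ in place of $N$. The output is a quadratic form on $\N_P^\perp/\N_P$ whose restriction to the generic fiber agrees with $\ovb_0$; performing this construction compatibly at every closed point produces the desired quadratic form on $\N^\perp/\N$. Nonsingularity at each stalk then follows from nonsingularity of its hermitianization $\beta(\ovb)_0$, which is handled by exactly the argument of Proposition \ref{prop:nonsing} (projectivity of the same local quotients being what makes that argument go through locally).

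To finish, Proposition \ref{prop:nondeg0} applied to the nonsingular hermitianization on $\N^\perp/\N$ yields $\deg(\N^\perp/\N)=0$. Since the generic fiber $\ovb_0$ is anisotropic, Corollary \ref{cor:qfbndl} forces the quadratic form on $\N^\perp/\N$ to be extended from $A$, and therefore so is its generic fiber $\ovb_0$. This gives the excellence statement for quadratic forms.
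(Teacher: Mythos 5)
Your proposal is correct and follows essentially the same route as the paper: a maximal totally isotropic submodule $N\subset M_L$, the bundles $\N\subset\N^\perp\subset\M_C$, the local projectivity from Lemma~\ref{lem:proj} used to rerun the sublagrangian reduction of Proposition~\ref{prop:sublagredquad} stalkwise (with nonsingularity handled as in Proposition~\ref{prop:nonsing}), and then Proposition~\ref{prop:nondeg0} together with Corollary~\ref{cor:qfbndl} to conclude that $\ovb_0$ is extended from $A$. No substantive differences from the paper's argument.
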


\section*{Appendix: Vector bundles over conics}
\renewcommand{\thesection}{\Alph{section}}
\setcounter{section}{1}
\setcounter{subsection}{0}
\setcounter{lemma}{0}

We give in this appendix an elementary proof of the classification of
vector bundles over conics used in \S\ref{sec:qfbndl}. The elementary
character of our approach is based on the representation of vector
bundles over conics or over the projective line as triples consisting
of the generic fiber, the module of sections over an affine open set,
and the stalks at the complement, which consists in one or two closed
points; see \S\ref{sec:VBP} and \S\ref{sec:VBc}.

\subsection{Matrices}
\label{sec:mat}

Let $K$ be an arbitrary field and let $u$ be an indeterminate on
$K$. Let $w_0$ and $w_\infty$ be respectively the $u$-adic and the
$u^{-1}$-adic valuations on the field $K(u)$ (with value group
$\mathbb{Z}$). Consider the following subrings of $K(u)$:
\[
\VR_V=K[u,u^{-1}],\qquad \VR_S=\{x\in K(u)\mid w_0(x)\geq0 \text{ and
} w_\infty(x)\geq0\}.
\]

The following theorem is equivalent to Grothendieck's classification
of vector bundles over the projective line~\cite{Groth}, as we will
see in \S\ref{sec:VBP}. (See~\cite{HM} for an 
elementary proof of another statement on matrices that is equivalent
to Grothendieck's theorem.)

\begin{thm}
  \label{thm:Grot}
  For every matrix $g\in\GL_n(K(u))$ there exist matrices
  $p\in\GL_n(\VR_S)$ and $q\in\GL_n(\VR_V)$  such that
  \[
  pgq=\diag\bigl((u-1)^{k_1},\ldots,(u-1)^{k_n}\bigr)\qquad
  \text{for some $k_1$, \ldots, $k_n\in\mathbb{Z}$}.
  \]
\end{thm}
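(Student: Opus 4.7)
The plan is induction on $n$. The base case $n=1$ is a direct factorization: for $g \in K(u)^\times$, write $g = c \cdot u^{n_u} \cdot \prod_\pi \pi^{n_\pi}$, where $c \in K^\times$ and $\pi$ ranges over the monic irreducibles of $K[u]$ distinct from $u$. The factor $c u^{n_u}$ is a unit of $\VR_V$; for each such $\pi$, the quotient $\pi/(u-1)^{\deg \pi}$ satisfies $w_0 = w_\infty = 0$ and so lies in $\VR_S^\times$. Setting $k = \sum_\pi n_\pi \deg \pi$, collecting factors gives $g = p \cdot (u-1)^k \cdot q$ with $p \in \VR_S^\times$ and $q \in \VR_V^\times$.

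For $n > 1$, the aim is to produce $p \in \GL_n(\VR_S)$ and $q \in \GL_n(\VR_V)$ so that $pgq$ takes the block-diagonal form $\operatorname{diag}\bigl((u-1)^{k_1},g'\bigr)$ with $g' \in \GL_{n-1}(K(u))$; the inductive hypothesis then handles $g'$. To set up the first column, pick any $q \in \GL_n(\VR_V)$ and set $v := gq \cdot e_1 \in K(u)^n$. Since $\VR_S$ is a principal ideal domain (it is a semilocal Dedekind ring), the $\VR_S$-submodule of $K(u)$ generated by the coordinates of $v$ is principal, say $\VR_S \cdot \ell$, whence $v/\ell \in \VR_S^n$ is unimodular. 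Completing $v/\ell$ to an $\VR_S$-basis of $\VR_S^n$ yields $p \in \GL_n(\VR_S)$ with $pv = \ell e_1$, so the first column of $pgq$ is $(\ell,0,\ldots,0)^T$; the $n=1$ case applied to $\ell$ then allows us to arrange that this entry equals $(u-1)^{k_1}$.

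The main obstacle is to clear the remaining first-row entries $(1,j)$ for $j \geq 2$ without disturbing the prepared first column. The operations preserving that first column are the block upper-triangular subgroups of $\GL_n(\VR_S)$ (acting from the left) and $\GL_n(\VR_V)$ (acting from the right); combining them yields a system of equations of the form $m_{1j} + \sum_{i \geq 2} s_i g_{ij} + a_j (u-1)^{k_1} = 0$ that $s_i \in \VR_S$ and $a_j \in \VR_V$ must solve. Solvability forces a careful minimality choice of $q$ and $v$, so that no further Gaussian-type reduction of $(u-1)^{k_1}$ is possible; this is the matrix-theoretic shadow of the ``sub-line bundle of maximal degree'' step in the standard vector-bundle proof of Grothendieck's theorem, and it relies on the partial-fractions identity $\VR_V + \VR_S = K(u)$. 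Once block-diagonal form is achieved, induction on $g'$ concludes the proof.
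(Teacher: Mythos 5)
Your base case and the first reduction are fine: the $n=1$ factorization matches the paper's, and using that $\VR_S$ (or $\VR_V$) is a principal ideal domain to bring the first column (resp.\ row) to the form $\bigl((u-1)^{k_1},0,\dots,0\bigr)$ is essentially how the paper also begins its induction. The problem is that the entire content of the theorem sits in the step you do not carry out: clearing the remaining first-row entries while preserving the prepared column. You correctly write down the equations $m_{1j}+\sum_{i\ge2}s_ig_{ij}+a_j(u-1)^{k_1}=0$ with $s_i\in\VR_S$, $a_j\in\VR_V$, and then assert that ``solvability forces a careful minimality choice of $q$ and $v$.'' That sentence \emph{is} the theorem. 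To make it a proof you would need to (1) say precisely which quantity is extremized (the exponent $k_1$, i.e.\ the degree of the sub-line bundle), (2) prove it is bounded so that an extremal choice exists --- this is not formal, it is the matrix analogue of the boundedness of degrees of sub-line bundles and requires an argument in terms of $w_0$, $w_\infty$ and the other valuations, (3) show that extremality forces $k_1$ to dominate the exponents occurring in the lower block (otherwise produce a better column), and (4) deduce solvability of your equations from a valuation-refined approximation statement: the crude identity $\VR_V+\VR_S=K(u)$ is not enough, because after dividing by $(u-1)^{k_1}$ you must place the error term in $(u-1)^{k_j-k_1}\VR_S$, i.e.\ you need approximants $\lambda\in\VR_V$ with prescribed lower bounds on \emph{both} $w_0(f-\lambda)$ and $w_\infty(f-\lambda)$. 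There is also a structural wrinkle: as stated, the $s_i$ are shared among all the equations for $j\ge2$, so the system only decouples after you have first diagonalized the lower block by induction, which your outline does in the opposite order.

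None of steps (1)--(4) appear in your write-up, and they are exactly where the actual work lies. The paper takes a different route that avoids any maximality or boundedness argument: after a PID reduction it boils everything down to a $2\times2$ lemma (Lemma~\ref{lem:diag}), proved by induction on $w(a)-w(c)$ with $w=w_0+w_\infty$, using the approximation statement of Proposition~\ref{prop:approx} (for every $f$ there is $\lambda\in\VR_V$ with $w_0(f-\lambda)\ge0$ and $w_\infty(f-\lambda)>0$). Your outline --- the classical ``sub-line bundle of maximal degree'' proof of Grothendieck's theorem transcribed into matrices --- is a viable plan, but as written it has a genuine gap at its crucial point and does not yet constitute a proof.
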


\begin{proof}
  The case $n=1$ is easy: using unique factorization in $K[u]$, we may
  factor every element in $K(u)^\times$ as $g=p\cdot(u-1)^k\cdot
  u^\alpha$ where $w_0(p)=w_\infty(p)=0$, hence
  $p\in\VR_S^\times$. The rest of the proof is by induction on $n$. In
  view of the $n=1$ case, it suffices to show that we may find
  $p\in\GL_n(\VR_S)$, $q\in\GL_n(\VR_V)$ such that $p\cdot g\cdot q$
  is diagonal. Since $\VR_V$ is a principal ideal domain, we may find
  a matrix $q_1\in\GL_n(\VR_V)$ such that
  \[
  gq_1=
  \begin{pmatrix}
    a_1&0&\cdots&0\\
    * & & & \\
    \vdots& &g_1 & \\
    * & & & 
  \end{pmatrix}
  \]
  where $a_1$ is the gcd of the entries in the first row of $g$. By
  induction, we may assume the theorem holds for $g_1$ and thus find
  $p_2\in\GL_n(\VR_S)$, $q_2\in\GL_n(\VR_V)$ such that
  \[
  p_2gq_1q_2=
  \begin{pmatrix}
    a_1 & 0 & 0 &\cdots&0\\
    b_2 & a_2&0&\cdots&0\\
    b_3 & 0& a_3&\cdots&0\\
    \vdots&\vdots&\vdots&\ddots&\vdots\\
    b_n &0&0&\cdots&a_n
  \end{pmatrix}
  \]
  for some $a_2$, \ldots, $a_n\in K(u)^\times$ and some $b_2$, \ldots,
  $b_n\in K(u)$. To complete the proof, it now suffices to apply
  $(n-1)$ times the following lemma:
  \renewcommand{\qed}{\relax}
\end{proof}

\begin{lemma}
  \label{lem:diag}
  Let $a$, $b$, $c\in K(u)$ with $a$, $c\neq0$. There exists
  $p\in\GL_2(\VR_S)$, $q\in\GL_2(\VR_V)$ such that the matrix
  \[
  p\cdot
  \begin{pmatrix}
    a&0\\ b&c
  \end{pmatrix}\cdot q
  \]
  is diagonal.
\end{lemma}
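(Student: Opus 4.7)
The strategy is to first normalize the diagonal entries $a$ and $c$ using the already-established $n=1$ case of Theorem~\ref{thm:Grot}, then to eliminate the off-diagonal entry $b$ by row and column operations.

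By the $n=1$ case, I write $a=p_a(u-1)^{k_a}u^{\alpha_a}$ and $c=p_c(u-1)^{k_c}u^{\alpha_c}$ with $p_a,p_c\in\VR_S^\times$ and integers $k_a,k_c,\alpha_a,\alpha_c$. Multiplying $M$ on the left by $\diag(p_a^{-1},p_c^{-1})\in\GL_2(\VR_S)$ and on the right by $\diag(u^{-\alpha_a},u^{-\alpha_c})\in\GL_2(\VR_V)$, I reduce to the case $a=(u-1)^{k_a}$ and $c=(u-1)^{k_c}$ (with $b$ replaced by a new element of $K(u)$ which I continue to call $b$). The swap matrix $\begin{pmatrix}0&1\\1&0\end{pmatrix}$ lies in both $\GL_2(\VR_S)$ and $\GL_2(\VR_V)$, so after a permutation I may additionally assume $k_a\leq k_c$.

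The lower-triangular operations $\begin{pmatrix}1&0\\\alpha&1\end{pmatrix}\in\GL_2(\VR_S)$ (with $\alpha\in\VR_S$) on the left and $\begin{pmatrix}1&0\\\beta&1\end{pmatrix}\in\GL_2(\VR_V)$ (with $\beta\in\VR_V$) on the right modify $b$ by $\alpha(u-1)^{k_a}+\beta(u-1)^{k_c}$. To diagonalize it is thus enough to prove that $b\in(u-1)^{k_a}\VR_S+(u-1)^{k_c}\VR_V$ for every $b\in K(u)$, that is, that this sum equals $K(u)$. When $k_c-k_a\leq 1$ this follows from the partial-fraction identity $K(u)=\VR_V+\VR_S$ (with intersection $K$), combined with the surjectivity $\VR_V\twoheadrightarrow\VR_V/(u-1)\VR_V\cong K$, which absorbs one extra factor of $u-1$.

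The main obstacle is the case $k_c-k_a\geq2$, where $(u-1)^{k_a}\VR_S+(u-1)^{k_c}\VR_V$ is a \emph{proper} subspace of $K(u)$ (for instance, one checks $u-1\notin\VR_S+(u-1)^2\VR_V$, since any equation $u-1=s+(u-1)^2 v$ with $s\in\VR_S$, $v\in\VR_V$ forces $v\in K[u]$ and then $s$ to be a non-constant polynomial, contradicting $v_\infty(s)\geq0$). I would resolve this case by an iterative ``rebalancing'' argument: applying non-triangular $p\in\GL_2(\VR_S)$ and $q\in\GL_2(\VR_V)$, one transforms $M$ into an equivalent matrix whose normalized diagonal exponents have a strictly smaller gap $k_c-k_a$. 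Constructing this rebalancing step is the crux of the proof; it uses the $n=1$ case applied to $b$ itself (writing $b=b_0(u-1)^{k_b}u^{\alpha_b}$ with $b_0\in\VR_S^\times$) to select $p$ and $q$ that shift the $(u-1)$-adic weight from the off-diagonal entry back onto the diagonal. Iteration of this step reaches the base case $k_c-k_a\leq1$, where the partial-fraction argument above clears $b$ and completes the diagonalization.
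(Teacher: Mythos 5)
Your normalization via the $n=1$ case and your treatment of the easy range are fine: with $a=(u-1)^{k_a}$, $c=(u-1)^{k_c}$, the unipotent operations available (left lower-triangular with entry in $\VR_S$, right lower-triangular with entry in $\VR_V$) change $b$ exactly by $a\VR_S+c\VR_V$, and your partial-fraction argument correctly shows this is all of $K(u)$ when $k_c-k_a\leq 1$; your counterexample $u-1\notin\VR_S+(u-1)^2\VR_V$ is also correct. (A minor slip: conjugating by the swap matrix does not exchange which of $a$, $c$ is paired with $\VR_S$ and which with $\VR_V$ --- the clearable set stays $a\VR_S+c\VR_V$ --- but this is harmless, since the case $k_c\leq k_a$ is easy anyway.) The genuine gap is that the ``rebalancing'' step for $k_c-k_a\geq2$ is only announced, not constructed, and that step is precisely the mathematical content of the lemma (and, in effect, of Grothendieck's theorem): it is where an apparently unbalanced triangular matrix must be converted into one with a strictly smaller gap by \emph{non}-triangular $p\in\GL_2(\VR_S)$, $q\in\GL_2(\VR_V)$, and writing $b=b_0(u-1)^{k_b}u^{\alpha_b}$ by itself does not tell you which $p,q$ to take, nor why any natural move keeps the entries in the correct rings and strictly decreases the gap.

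For comparison, the paper carries out exactly this step by induction on $w(a)-w(c)$, where $w=w_0+w_\infty$ (which in your normal form equals $k_c-k_a$, so your induction quantity is the right one). Using Proposition~\ref{prop:approx} it first replaces $b$ by $b-\lambda c$ so that $w_0(b)\geq w_0(a)=w_0(c)$ and $w_\infty(b)>w_\infty(c)$; if $w_\infty(b)\geq w_\infty(a)$ then $a^{-1}b\in\VR_S$ and $b$ is cleared by a single row operation, and otherwise (after possibly replacing $b$ by $a+b$) one has $ab^{-1}\in\VR_S$ and applies
\[
\begin{pmatrix} 1&-ab^{-1}\\ 0&1 \end{pmatrix}
\begin{pmatrix} a&0\\ b&c \end{pmatrix}
\begin{pmatrix} 0&1\\ 1&0 \end{pmatrix}
=
\begin{pmatrix} -ab^{-1}c&0\\ c&b \end{pmatrix},
\]
for which $w(-ab^{-1}c)-w(b)<w(a)-w(c)$, so induction applies. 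Your proposal would be complete if you supplied a step of this kind; as written, the hardest case is left unproved.
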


The proof uses the following approximation property:

\begin{prop}
  \label{prop:approx}
  For every $f\in K(u)^\times$, there exists $\lambda\in\VR_V$ such
  that $w_0(f-\lambda)\geq0$ and $w_\infty(f-\lambda)>0$.
\end{prop}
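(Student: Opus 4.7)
The plan is to construct $\lambda$ as a sum $\lambda = \lambda_0 + \lambda_\infty$, where $\lambda_0 \in u^{-1}K[u^{-1}]$ absorbs the principal part of $f$ at $u = 0$ and $\lambda_\infty \in K[u]$ absorbs the ``polynomial part at infinity'' of $f - \lambda_0$. The crucial observation is that the two corrections do not interfere: $\lambda_0$ lies in $u^{-1}K[u^{-1}]$, which vanishes at infinity, while $\lambda_\infty$ lies in $K[u]$, which has non-negative $w_0$-valuation. Both summands lie in $\VR_V = K[u,u^{-1}]$, so $\lambda$ does too.

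First I would let $\lambda_0 \in u^{-1}K[u^{-1}]$ be the (finite) principal part of $f$ at $u = 0$, obtained from the Laurent expansion of $f$ in $K((u))$; since $f$ is rational, only finitely many negative powers occur, so $\lambda_0$ is indeed a polynomial in $u^{-1}$ without constant term. Then $g := f - \lambda_0$ satisfies $w_0(g) \geq 0$, so I may write $g = A(u)/B(u)$ with $A$, $B \in K[u]$ coprime and $B(0) \neq 0$. Euclidean division in $K[u]$ yields $A = QB + R$ with $\deg R < \deg B$, so $g = Q(u) + R(u)/B(u)$. I then set $\lambda_\infty := Q(u) \in K[u]$ and $\lambda := \lambda_0 + Q(u) \in \VR_V$.

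Finally I would verify the two conditions. We have $f - \lambda = g - Q(u) = R(u)/B(u)$. Since $B(0) \neq 0$ and $R \in K[u]$, this expression has $w_0 \geq 0$. Since $\deg R < \deg B$, the rational function $R/B$ has a zero at infinity of order $\deg B - \deg R \geq 1$, giving $w_\infty(f - \lambda) > 0$. No real obstacle arises: the argument is essentially a two-step variant of partial-fraction decomposition, combining principal-part extraction at $0$ with ordinary polynomial division to extract the part with a pole at $\infty$.
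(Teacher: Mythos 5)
Your proof is correct and follows essentially the same route as the paper: correct the pole at $u=0$ by a polynomial in $u^{-1}$, then correct at infinity by an element of $K[u]$, observing that the second step does not disturb the condition at $0$. The only difference is cosmetic — you produce the two corrections in closed form via the Laurent principal part and Euclidean division, where the paper builds them term by term by induction on $w_0(f)$ and $w_\infty(f-\lambda_0)$.
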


\begin{proof}
  We first show, by descending induction on $w_0(f)$, that there
  exists $\lambda_0\in\VR_V$ such that $w_0(f-\lambda_0)\geq0$: if
  $w_0(f)\geq0$ we may take $\lambda_0=0$. Otherwise, let
  $f=ab^{-1}u^\alpha$ where $a$, $b\in F[u]$ are not divisible by
  $u$. For $\mu=a(0)b(0)^{-1}u^\alpha\in\VR_V$ we have
  \[
  w_0(f-\mu)>\alpha=w_0(f),
  \]
  hence induction yields $\mu_0\in\VR_V$ such that
  $w_0\bigl((f-\mu)-\mu_0\bigr)\geq0$, and we may take
  $\lambda_0=\mu+\mu_0$.

  Fix $\lambda_0\in\VR_V$ such that $w_0(f-\lambda_0)\geq0$. If
  $w_\infty(f-\lambda_0)>0$ we are done. Otherwise, let
  \[
  f-\lambda_0=\frac{a_nu^n+\cdots+a_0}{b_mu^m+\cdots+b_0}
  \]
  with $a_n$, \ldots, $a_0$, $b_m$, \ldots, $b_0\in K$, $a_n$,
  $b_m\neq0$, so that $w_\infty(f-\lambda_0)=m-n\leq0$. Let
  $\mu_1=a_nb_m^{-1}u^{n-m}\in F[u]$. We have
  \[
  w_\infty\bigl((f-\lambda_0)-\mu_1\bigr)>m-n=w_\infty(f-\lambda_0).
  \]
  Again, arguing by induction on $w_\infty(f-\lambda_0)$, we may find
  $\mu_2\in F[u]$ such that
  \[
  w_\infty\bigl((f-\lambda_0)-\mu_2\bigr)>0.
  \]
  Note that $w_0(\mu_2)\geq0$ since $\mu_2\in F[u]$. Therefore, 
  \[
  w_0\bigl((f-\lambda_0)-\mu_2\bigr)\geq\min\bigl(w_0(f-\lambda_0),
  w_0(\mu_2)\bigr)\geq0,
  \]
  so we may choose $\lambda=\lambda_0+\mu_2$.
\end{proof}

\begin{proof}[Proof of Lemma~\ref{lem:diag}]
  For $f\in K(u)^\times$, let $w(f)=w_0(f)+w_\infty(f)$. Note that $w$
  is \emph{not} a valuation, but it is multiplicative and $w(u)=0$. We
  shall argue by induction on $w(a)-w(c)\in\mathbb{Z}$; but first note
  that by multiplying $\bigl(
  \begin{smallmatrix}
    a&0\\b&c
  \end{smallmatrix}\bigr)$
  on the right by $\bigl(
  \begin{smallmatrix}
    1&0\\0&u^\alpha
  \end{smallmatrix}\bigr)$
  for $\alpha=w_0(a)-w_0(c)$, we may assume $w_0(a)=w_0(c)$. By
  Proposition~\ref{prop:approx}, there exists $\lambda\in\VR_V$ such
  that
  \[
  w_0(bc^{-1}-\lambda)\geq0\qquad\text{and}\qquad
  w_\infty(bc^{-1}-\lambda)>0.
  \]
  We then have $w_0(b-\lambda c)\geq w_0(c)=w_0(a)$ and
  $w_\infty(b-\lambda c)> w_\infty(c)$. Multiplying $\bigl(
  \begin{smallmatrix}
    a&0\\b&c
  \end{smallmatrix}\bigr)$ on the right by $\bigl(
  \begin{smallmatrix}
    1&0\\-\lambda&1
  \end{smallmatrix}\bigr)$ yields
  \[
  \begin{pmatrix}
    a&0\\b&c
  \end{pmatrix}\cdot
  \begin{pmatrix}
    1&0\\-\lambda&1
  \end{pmatrix}=
  \begin{pmatrix}
    a&0\\b-\lambda c&c
  \end{pmatrix}.
  \]
  Thus, we may substitute $b-\lambda c$ for $b$ and thus assume
  \begin{equation}
    \label{eq:wb}
    w_0(b)\geq w_0(c)=w_0(a) \qquad\text{and}\qquad w_\infty(b)>
    w_\infty(c).
  \end{equation}
  If $w_\infty(b)\geq w_\infty(a)$, then $a^{-1}b\in\VR_S$ and the
  lemma follows from the equation
  \begin{equation}
    \label{eq:mat}
    \begin{pmatrix}
      1&0\\-a^{-1}b&1
    \end{pmatrix}\cdot
    \begin{pmatrix}
      a&0\\b&c
    \end{pmatrix}=
    \begin{pmatrix}
      a&0\\0&c
    \end{pmatrix}.
  \end{equation}
  We now start our induction on $w(a)-w(c)$. If $w(a)-w(c)\leq0$, then
  since $w_0(a)=w_0(c)$ we have $w_\infty(a)\leq w_\infty(c)$. By
  \eqref{eq:wb} it follows that $w_\infty(b)>w_\infty(a)$ and we are
  done by \eqref{eq:mat}. If $w(a)-w(c)>0$ but $w_\infty(b)\geq
  w_\infty(a)$, we may also conclude by \eqref{eq:mat}. For the rest
  of the proof, we may thus assume
  $w_\infty(a)>w_\infty(b)>w_\infty(c)$. If $w_0(b)>w_0(a)$, then in
  view of the equation
  \[
  \begin{pmatrix}
    1&0\\1&1
  \end{pmatrix}\cdot
  \begin{pmatrix}
    a&0\\b&c
  \end{pmatrix}=
  \begin{pmatrix}
    a&0\\a+b&c
  \end{pmatrix}
  \]
  we may substitute $a+b$ for $b$. In that case, we have
  \[
  w_0(a+b)=\min\bigl(w_0(a),w_0(b)\bigr)=w_0(a)
  \]
  and
  \[
  w_\infty(a+b)=\min\bigl(w_\infty(a),w_\infty(b)\bigr)=w_\infty(b).
  \]
  Thus, in all cases we may assume
  \[
  w_0(b)=w_0(a)=w_0(c)\qquad\text{and}\qquad
  w_\infty(a)>w_\infty(b)>w_\infty(c).
  \]
  Then $ab^{-1}\in\VR_S$. Consider
  \[
  \begin{pmatrix}
    1&-ab^{-1}\\0&1
  \end{pmatrix}\cdot
  \begin{pmatrix}
    a&0\\b&c
  \end{pmatrix}\cdot
  \begin{pmatrix}
    0&1\\1&0
  \end{pmatrix}=
  \begin{pmatrix}
    -ab^{-1}c&0\\c&b
  \end{pmatrix}.
  \]
  We have 
  \[
  w(-ab^{-1}c)-w(b)=w(a)+w(c)-2w(b)=w_\infty(a)+w_\infty(c)-2w_\infty(b).
  \]
  Since $w_\infty(b)>w_\infty(c)$ we have
  \[
  w_\infty(a)+w_\infty(c)-2w_\infty(b)< w_\infty(a)-w_\infty(c).
  \]
  But $w(a)-w(c)=w_\infty(a)-w_\infty(c)$, hence
  $w(-ab^{-1}c)-w(b)<w(a)-w(c)$. By induction, the lemma holds for
  $\bigl(
  \begin{smallmatrix}
    -ab^{-1}c&0\\c&b
  \end{smallmatrix}\bigr)$, hence also for
  $\bigl(
  \begin{smallmatrix}
    a&0\\b&c
  \end{smallmatrix}\bigr)$.
\end{proof}

\subsection{Vector bundles over $\Proj$}
\label{sec:VBP}

We use the same notation as in \S\ref{sec:mat}.

\begin{defis}
  A \emph{vector bundle} over $\Proj$ is a triple $\E=(E,E_V,E_S)$
  consisting of a finite-dimensional $K(u)$-vector space $E$, a
  finitely generated $\VR_V$-module $E_V\subset E$, and a finitely
  generated $\VR_S$-module $E_S\subset E$ such that
  \[
  E=E_V\otimes_{\VR_V}K(u) = E_S\otimes_{\VR_S}K(u).
  \]
  The \emph{rank} of $\E$ is $\rk \E=\dim E$. The intersection $E_V\cap
  E_S$ is a $K$-vector space, which is called the space of
  \emph{global sections} of $\E$. We use the notation
  \[
  \Gamma(\E)=E_V\cap E_S.
  \]
  Since $\VR_V$ and $\VR_S$ are principal ideal
  domains, the $\VR_V$- and $\VR_S$-modules $E_V$ and $E_S$ are
  free. Their rank is the rank $n$ of $\E$. Let $(e_i)_{i=1}^n$ (resp.\
  $(f_i)_{i=1}^n$) be a base of the $\VR_V$-module $E_V$ (resp.\ the
  $\VR_S$-module $E_S$). Each of these bases is a $K(u)$-base of $E$,
  hence we may find a matrix $g=(g_{ij})_{i,j=1}^n\in\GL_n(K(u))$ such
  that
  \[
    e_j=\sum_{i=1}^nf_ig_{ij}\qquad\text{for $j=1$, \ldots, $n$.}
  \]
  The \emph{degree} $\deg \E$ is defined as
  \[
  \deg \E = w_0(\det g) + w_\infty(\det g)\in\mathbb{Z}.
  \]
  To see that this integer does not depend on the choice of bases,
  observe that a change of bases substitutes for the matrix $g$ a
  matrix $g'$ of the form $g'=pgq$ for some $p\in\GL_n(\VR_S)$ and
  $q\in\GL_n(\VR_V)$. We have $\det p\in\VR_S^\times$, hence $w_0(\det
  p)=w_\infty(\det p)=0$. Likewise, $\det
  q\in\VR_V^\times=K^\times\oplus u^{\mathbb Z}$, so $w_0(\det
  q)+w_\infty(\det q) =0$, and it follows that $w_0(\det g) +
  w_\infty(\det g) = w_0(\det g')+w_\infty(\det g')$.

  A \emph{morphism} of vector bundles $(E,E_V,E_S)\to (E',E'_V,E'_S)$
  over $\Proj$ is a $K(u)$-linear map $\varphi\colon E\to E'$ such
  that $\varphi(E_V)\subset E'_V$ and $\varphi(E_S)\subset E'_S$.
\end{defis}

\begin{example}
  \label{ex:rk1}
  \emph{Vector bundles of rank~$1$.} 
  Since $\VR_V$ and $\VR_S$ are principal ideal domains, every vector
  bundle of rank~$1$ is isomorphic to a triple $\E=(K(u), f\VR_V,
  g\VR_S)$ for some $f$, $g\in K(u)^\times$. Using unique
  factorization in $K[u]$ we may find $p\in\VR_S^\times$, $k$,
  $\alpha\in\mathbb{Z}$ such that $fg^{-1}=p\cdot(u-1)^k\cdot
  u^\alpha$. Multiplication by $g^{-1}p^{-1}(u-1)^{-k}$ is a
  $K(u)$-linear map $\varphi\colon K(u)\to K(u)$ such that
  $\varphi(f)=u^\alpha$ and $\varphi(g)=p^{-1}(u-1)^{-k}$. Since
  $u\in\VR_V^\times$, it follows that
  $\varphi(f\VR_V)=\VR_V$. Likewise, since
  $p\in\VR_S^\times$, we have
  $\varphi(g\VR_S)=(u-1)^{-k}\VR_S$. Therefore, $\varphi$ defines an
  isomorphism $\E\stackrel{\sim}{\to} (K(u),\VR_V,
  (u-1)^{-k}\VR_S)$. For $n\in\mathbb{Z}$, we write
  \[
  \VR_{\Proj}(n)=(K(u),\VR_V,(u-1)^n\VR_S).
  \]
  If $g\in K(u)^\times$ satisfies $w_0(g)+w_\infty(g)= -n$, then
  $g\cdot(u-1)^{-n} u^{-w_0(g)}\in\VR_S^\times$, hence the arguments
  above yield
  \begin{equation}
    \label{eq:On}
    (K(u),\VR_V,g\VR_S) \simeq (K(u),\VR_V,(u-1)^n\VR_S) =
    \VR_{\Proj}(-w_0(g)-w_\infty(g)). 
  \end{equation}
  By definition of the degree,
  \[
  \deg\VR_{\Proj}(n)=w_0\bigl((u-1)^{-n}\bigr) +
  w_\infty\bigl((u-1)^{-n}\bigr) = n.
  \]
  The vector space of global sections of $\VR_{\Proj}(n)$ is easily
  determined: by definition, we have
  \begin{align*}
  \Gamma(\VR_{\Proj}(n)) & =\VR_V\cap (u-1)^n\VR_S \\ & = \{f\in\VR_V
  \mid w_0(f)\geq w_0((u-1)^n), \; w_\infty(f)\geq w_\infty((u-1)^n)\}.
  \end{align*}
  Since $w_0(u-1)=0$ and $w_\infty(u-1)=-1$, we have
  \[
  \Gamma(\VR_{\Proj}(n))=\{f\in K[u]\mid \deg f\leq n\},
  \]
  hence
  \[
  \dim\Gamma(\VR_{\Proj}(n))=
  \begin{cases}
    0&\text{if $n<0$},\\
   1+n&\text{if $n\geq0$}.
  \end{cases}
  \]
\end{example}

\begin{thm}[Grothendieck]
  \label{thm:Gro}
  For every vector bundle $\E$ on $\Proj$, there exist integers
  $k_1$, \ldots, $k_n\in\mathbb{Z}$ such that
  \[
  \E\simeq \VR_{\Proj}(k_1)\oplus\cdots\oplus\VR_{\Proj}(k_n).
  \]
\end{thm}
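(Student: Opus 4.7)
The plan is to reduce the statement directly to Theorem~\ref{thm:Grot} by translating the matrix normal form into a decomposition of the triple $\E=(E,E_V,E_S)$.

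Let $n=\rk\E$. Since $\VR_V$ and $\VR_S$ are principal ideal domains, the modules $E_V$ and $E_S$ are free of rank $n$; pick an $\VR_V$-basis $(e_j)_{j=1}^n$ of $E_V$ and an $\VR_S$-basis $(f_i)_{i=1}^n$ of $E_S$. Both are $K(u)$-bases of $E$, so there is a matrix $g=(g_{ij})\in\GL_n(K(u))$ with $e_j=\sum_i f_i g_{ij}$. Apply Theorem~\ref{thm:Grot} to obtain $p\in\GL_n(\VR_S)$ and $q\in\GL_n(\VR_V)$ such that
\[
pgq=\diag\bigl((u-1)^{k_1},\ldots,(u-1)^{k_n}\bigr).
\]

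Now define new $K(u)$-bases of $E$ by $e'_j=\sum_i e_i q_{ij}$ and $f'_i=\sum_\ell f_\ell (p^{-1})_{\ell i}$. Because $q\in\GL_n(\VR_V)$, the tuple $(e'_j)$ is again an $\VR_V$-basis of $E_V$; because $p^{-1}\in\GL_n(\VR_S)$, the tuple $(f'_i)$ is again an $\VR_S$-basis of $E_S$. From the relation $pgq$ being diagonal, one computes $e'_j = f'_j\,(u-1)^{k_j}$ for each $j$. Letting $E^{(j)}=K(u)\cdot f'_j\subset E$, we get
\[
E=\bigoplus_{j=1}^n E^{(j)},\qquad E_V=\bigoplus_{j=1}^n f'_j(u-1)^{k_j}\VR_V,\qquad E_S=\bigoplus_{j=1}^n f'_j\VR_S,
\]
so $\E$ is the direct sum of the rank-one subbundles $\E^{(j)}=\bigl(E^{(j)},\,f'_j(u-1)^{k_j}\VR_V,\,f'_j\VR_S\bigr)$.

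It remains to identify each $\E^{(j)}$. The $K(u)$-linear isomorphism $E^{(j)}\to K(u)$ sending $f'_j\mapsto 1$ identifies $\E^{(j)}$ with $(K(u),(u-1)^{k_j}\VR_V,\VR_S)$, and multiplication by $(u-1)^{-k_j}$ then identifies this with $(K(u),\VR_V,(u-1)^{-k_j}\VR_S)=\VR_{\Proj}(-k_j)$ (compare Example~\ref{ex:rk1} and~\eqref{eq:On}). Therefore $\E\simeq\VR_{\Proj}(-k_1)\oplus\cdots\oplus\VR_{\Proj}(-k_n)$, as required.

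The substantive content lies entirely in Theorem~\ref{thm:Grot}; here there is no real obstacle, only a careful unpacking of what it means to change bases in $E_V$ by $q$ and in $E_S$ by $p^{-1}$ and to rescale within each rank-one summand. (The uniqueness of the integers $k_j$ up to permutation is not asserted in the theorem but follows from the fact that $\dim_K\Gamma(\VR_{\Proj}(n))$ and $\dim_K\Gamma(\VR_{\Proj}(n)\otimes\VR_{\Proj}(-1))$ determine the multiplicities.)
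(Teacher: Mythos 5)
Your proof is correct and follows essentially the same route as the paper: pick $\VR_V$- and $\VR_S$-bases, apply Theorem~\ref{thm:Grot} to the change-of-basis matrix, and convert the resulting diagonal form into a direct-sum decomposition into rank-one bundles identified via Example~\ref{ex:rk1}. The only differences are cosmetic sign conventions (the paper normalizes $pgq$ to $\diag((u-1)^{-k_i})$ so the summands come out as $\VR_{\Proj}(k_i)$ directly) and your expressing the summands in terms of $(f'_j)$ rather than $(e'_j)$.
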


\begin{proof}
  Let $\E=(E,E_V,E_S)$ be of rank~$n$. Let $(e_i)_{i=1}^n$ (resp.\
  $(f_i)_{i=1}^n$) be a base of the $\VR_V$-module $E_V$ (resp.\ the
  $\VR_S$-module $E_S$), and let $g=(g_{ij})_{i,j=1}^n\in\GL_n(K(u))$ be such
  that
  \begin{equation}
    \label{eq:1}
    e_j=\sum_{i=1}^nf_ig_{ij}\qquad\text{for $j=1$, \ldots, $n$.}
  \end{equation}
  Theorem~\ref{thm:Grot} yields matrices $p\in\GL_n(\VR_S)$ and
  $q\in\GL_n(\VR_V)$ such that
  \begin{equation}
    \label{eq:2}
    pgq=\diag\bigl((u-1)^{-k_1},\ldots, (u-1)^{-k_n}\bigr)
    \qquad\text{for some $k_1$, \ldots, $k_n\in\mathbb{Z}$.}
  \end{equation}
  Let $p^{-1}=(p_{ij})_{i,j=1}^n$ and $q=(q_{ij})_{i,j=1}^n$, and
  define for $j=1$, \ldots, $n$
  \[
  f'_j=\sum_{i=1}^nf_ip_{ij} \quad\text{and}\quad
  e'_j=\sum_{i=1}^ne_iq_{ij}.
  \]
  Because $p\in\GL_n(\VR_S)$, the sequence $(f'_i)_{i=1}^n$ is a base
  of $E_S$. Likewise, $(e'_i)_{i=1}^n$ is a base of $E_V$, and
  from~\eqref{eq:1} and \eqref{eq:2} we derive $e'_i=f'_i(u-1)^{-k_i}$
  for $i=1$, \ldots, $n$. Thus,
  \[
  E=\bigoplus_{i=1}^ne'_iK(u),\quad E_V=\bigoplus_{i=1}^n e'_i\VR_V,
  \quad
  E_S = \bigoplus_{i=1}^ne'_i(u-1)^{k_i}\VR_S.
  \]
  These equations mean that the map $E\to K(u)^{\oplus n}$ that
  carries each vector to the $n$-tuple of its coordinates in the base
  $(e'_i)_{i=1}^n$ defines an isomorphism of vector bundles
  \[
  \E\stackrel{\sim}{\to}\VR_{\Proj}(k_1)\oplus\cdots\oplus\VR_{\Proj}(k_n).
  \qedhere
  \]
\end{proof}

\begin{cor}
  \label{cor:glsecP}
  For every vector bundle $\E$ on $\Proj$, the $K$-vector space of
  global sections $\Gamma(\E)$ is finite-dimensional. More precisely,
  if $\E\simeq\VR_{\Proj}(k_1)\oplus\cdots\oplus\VR_{\Proj}(k_n)$ for
  some $k_1$, 
  \ldots, $k_n\in\mathbb{Z}$, then
  \[
  \dim\Gamma(\E)=\sum_{i=1}^n\max(1+k_i,0) \quad\text{and}\quad \deg \E=
  \sum_{i=1}^n k_i.
  \]
\end{cor}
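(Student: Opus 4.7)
The plan is to derive the corollary directly from Grothendieck's classification (Theorem~\ref{thm:Gro}) together with the explicit computations already carried out in Example~\ref{ex:rk1}. By Theorem~\ref{thm:Gro} we may fix an isomorphism $\E\simeq\VR_{\Proj}(k_1)\oplus\cdots\oplus\VR_{\Proj}(k_n)$, and from Example~\ref{ex:rk1} we already know, for each $i$,
\[
  \dim\Gamma(\VR_{\Proj}(k_i))=\max(1+k_i,0)\qquad\text{and}\qquad \deg\VR_{\Proj}(k_i)=k_i.
\]
Thus, once additivity of $\Gamma$ and of $\deg$ under direct sums is established, the two formulas follow by summation over $i=1,\ldots,n$, and finite-dimensionality of $\Gamma(\E)$ is automatic.

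For additivity of $\Gamma$, I would unwind the definition: if $\E=(E,E_V,E_S)$ and $\E'=(E',E'_V,E'_S)$, then $\E\oplus\E'=(E\oplus E',E_V\oplus E'_V,E_S\oplus E'_S)$, and intersecting with $E\oplus E'$ shows
\[
  \Gamma(\E\oplus\E')=(E_V\oplus E'_V)\cap(E_S\oplus E'_S)=(E_V\cap E_S)\oplus(E'_V\cap E'_S)=\Gamma(\E)\oplus\Gamma(\E').
\]
Iterating gives $\Gamma(\E)=\bigoplus_{i=1}^n\Gamma(\VR_{\Proj}(k_i))$, and combining with the dimension count from Example~\ref{ex:rk1} yields the first formula.

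For additivity of degree, I would observe that if bases of the $\VR_V$- and $\VR_S$-modules of $\E$ and $\E'$ give transition matrices $g\in\GL_m(K(u))$ and $g'\in\GL_{n-m}(K(u))$, then concatenating them yields a transition matrix for $\E\oplus\E'$ of block-diagonal form $\diag(g,g')$, with $\det\diag(g,g')=\det g\cdot\det g'$. Since $w_0$ and $w_\infty$ are valuations, $w_0(\det\diag(g,g'))+w_\infty(\det\diag(g,g'))$ is the sum of the corresponding quantities for $g$ and $g'$, which by definition means $\deg(\E\oplus\E')=\deg\E+\deg\E'$. Applying this inductively and using $\deg\VR_{\Proj}(k_i)=k_i$ from Example~\ref{ex:rk1} gives $\deg\E=\sum k_i$.

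There is no real obstacle here: both additivity statements are bookkeeping, and all substantive content (the rank~$1$ dimension count and the fact that the transition matrices can be brought to diagonal form $\diag((u-1)^{-k_1},\ldots,(u-1)^{-k_n})$) has already been done. The only minor point to check is that the isomorphism of $\E$ with a direct sum of $\VR_{\Proj}(k_i)$ provided by Theorem~\ref{thm:Gro} is an isomorphism of vector bundles (hence induces a $K$-linear isomorphism on global sections and preserves the degree), which is immediate since the transformation used in the proof of Theorem~\ref{thm:Gro} sends the $\VR_V$- and $\VR_S$-lattices to the prescribed ones.
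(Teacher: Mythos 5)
Your proof is correct and follows essentially the same route as the paper: the paper also reduces to the rank-one computations of Example~\ref{ex:rk1} via Theorem~\ref{thm:Gro}, using additivity of $\Gamma$ and $\deg$ under direct sums (which the paper states without the bookkeeping you spell out). No gaps.
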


\begin{proof}
  If $\E=\E_1\oplus \E_2$, then $\Gamma(\E)=\Gamma(\E_1)\oplus\Gamma(\E_2)$
  and $\deg \E=\deg \E_1+\deg \E_2$. Since each $\Gamma(\VR_{\Proj}(n))$ is
  finite-dimensional and $\deg\VR_{\Proj}(n)=n$ (see
  Example~\ref{ex:rk1}), the corollary follows.
\end{proof}

From the formula for $\dim\Gamma(\E)$, it is easily seen by tensoring $\E$
with $\VR_{\Proj}(k)$ for various $k\in\mathbb{Z}$ that the integers $k_1$,
\ldots, $k_n$ such that
$\E\simeq\VR_{\Proj}(k_1)\oplus\cdots\oplus\VR_{\Proj}(k_n)$ 
are uniquely determined up to permutation.

\subsection{Vector bundles over conics}
\label{sec:VBc}

Let $L$ be the function field of a smooth projective conic $C$ over a field
$F$. Assume $C$ has no rational point over $F$, and let $\infty$ be a
point of degree~$2$ on $C$ with residue field $K$ separable over
$F$. Let $v_\infty$ be the corresponding discrete valuation on $L$ and
$\VR_\infty$ be its valuation ring. Let also $\VR_U\subset L$ be the
affine ring of $C\setminus\{\infty\}$, which is the intersection of
all the valuation rings of the $F$-valuations on $L$ other than
$v_\infty$.

Let $C_K=C\times\operatorname{Spec} K$ be the conic over $K$ obtained
by base change, and let $f\colon C_K\to C$ be the projection. Since
$C_K$ has a rational point, we have $C_K\simeq\Proj$, i.e., the
composite field $KL$ is a purely transcendental extension of $K$. We
may find $u\in KL$ such that $KL=K(u)$ and the two valuations of
$K(u)$ extending $v_\infty$ are $w_0$ and $w_\infty$, the $u$-adic and
$u^{-1}$-adic valuation of $K(u)$. Thus,
using the notation of \S\ref{sec:VBP}, 
\[
\VR_U\otimes_FK=\VR_V \quad\text{and}\quad \VR_\infty\otimes_FK=\VR_S.
\]

\begin{remark}
  \label{rem:explicit}
  A concrete description of the rings defined above can be obtained by
  representing $C$ as the Severi--Brauer variety of a quaternion
  division algebra $Q$. Write $V$ for the $3$-dimensional subspace of
  trace $0$ quaternions. Then $q(v):=v^2$ is a quadratic form on $V$
  and the conic $C$ is the quadric in the projective plane $\mathbb{P}(V)$
  given by the equation $q=0$. Every closed point of degree~$2$ on $C$
  is determined by an equation $\varphi=0$ for some nonzero linear form
  $\varphi\in V^*$. If $(r,s)$ is a base of $\ker\varphi\subset V$,
  then the equation $(xr+ys)^2=0$ has the solution $x=-q(s)$, $y=rs$
  in $F(rs)$, hence $F(rs)$ is the residue field of the corresponding
  point. Let $\infty$ be the closed point on $C$ determined by a
  linear form $\varphi$ such that $F(rs)$ is a separable quadratic
  extension of $F$. Let also $t\in V$ be a nonzero vector orthogonal
  to $\ker\varphi$ for the polar form $b_q$ of $q$. If
  $t\in\ker\varphi$, then $b_q(t,t)=0$, hence $\charac F=2$. Moreover,
  $t$ is a linear combination of $r$ and $s$, and the equations
  $b_q(t,r)=b_q(t,s)=0$ yield $b_q(r,s)=0$. This is a contradiction
  because then the minimal polynomial of $rs$, which is
  $X^2-b_q(r,s)X+q(r)q(s)$, is not separable. Therefore, in all cases
  the choice of $\infty$ guarantees that $(r,s,t)$ is a base of
  $V$. Let $(x,y,z)$ be the dual base of $V^*$. Then the conic $C$
  is given by the equation 
  \[
  (xr+ys+zt)^2=0,
  \]
  and $\infty$ is the point determined by the equation $z=0$. Because
  $t$ is orthogonal to $r$ and $s$, the equation of the conic
  simplifies to
  \[
  (xr+ys)^2+z^2t^2=0.
  \]
  Let $U=C\setminus\{\infty\}$; then
  \[
  \VR_U=F \Big [\frac{x}{z}, \frac{y}{z}\Big ]\subset F\Big
  (\frac{x}{z}, \frac{y}{z}\Big )=L.
  \] 
  The equation of the conic shows that $\frac yz$ is a root of a
  quadratic equation over $F(\frac xz)$, hence every element in $L$
  has a unique expression of the form $f(\frac xz)+\frac yz g(\frac
  xz)$ for some rational functions $f$, $g$ with coefficients in $F$.
  If $v_{\infty}$ is the discrete valuation of the local ring
  $\VR_{\infty}$, then  
  \[
  v_{\infty}\Big (\frac{x}{z}\Big )=v_{\infty}\Big (\frac{y}{z}\Big )=-1.
  \]
  More precisely, for $f$, $g$, $h$ polynomials in one variable over
  $F$, with $h\neq0$,
  \[
  v_\infty\Big(\frac{f(\frac xz)+\frac yzg(\frac xz)}{h(\frac
    xz)}\Big) = \deg h-\max(\deg f,\: 1+\deg g).
  \]
  We claim that we may take for $u$ the element $\frac xzrs+\frac
  yzq(s)$. To see this, let $\iota$ denote the nontrivial
  $L$-automorphism of $KL$.  For $u=\frac xzrs+\frac yzq(s)$ we have
  $\iota(u)=\frac xzsr+\frac yzq(s)$, and from the equation of the conic
  it follows that
  \begin{equation}
  \label{eq:iotau}
  u\:\iota(u)=\frac{q(s)}{z^2}(xr+ys)^2=-q(s)q(t)\in F^\times.
  \end{equation}
  This equation shows that for every valuation $w$ of $KL$ extending
  $v_\infty$ we have $w(u)=-w\bigl(\iota(u)\bigr)$. Moreover, from
  $u=\frac xzrs+\frac yzq(s)$ and $u-\iota(u)=\frac xz(rs-sr)$ it
  follows that
  \[
  w(u)\geq\min\Bigl(v_\infty\Bigl(\frac xz\Bigr),v_\infty\Bigl(\frac
  yz\Bigr)\Bigr)=-1 \quad\text{and}\quad -1=v_\infty\Bigl(\frac xz\Bigr)
  \geq\min\bigl(w(u),w\bigl(\iota(u)\bigr)\bigr).
  \]
  Therefore, either $w(u)=-w\bigl(\iota(u)\bigr)=1$, i.e., $w=w_0$, or
  $w(u)=w\bigl(\iota(u)\bigr)=-1$, i.e., $w=w_\infty$. 
\end{remark}

The following result is folklore. (For a proof in characteristic
different from~$2$, see Pfister \cite[Prop.~1]{P}.)

\begin{lemma}
  The ring $\VR_U$ is a principal ideal domain.
\end{lemma}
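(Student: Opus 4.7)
The plan is to argue that $\VR_U$ is a Dedekind domain with trivial Picard group, hence a principal ideal domain. The Dedekind property is standard, as $\VR_U$ is the coordinate ring of the smooth affine curve $U = C\setminus\{\infty\}$. For the Picard group, the excision exact sequence for Picard groups of curves
\[
\mathbb{Z}\cdot[\infty]\to\operatorname{Pic}(C)\to\operatorname{Pic}(U)\to 0
\]
reduces the problem to showing that $\operatorname{Pic}(C)$ is generated by $[\infty]$.

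Since $C$ has genus $0$, the canonical class has degree $-2$, so Riemann--Roch yields $\ell(D)=\deg D+1$ for every divisor $D$ with $\deg D\geq -1$. Applied to a divisor $D$ of degree $0$ this gives $\ell(D)=1$, so $D$ is linearly equivalent to an effective divisor of degree $0$, namely the zero divisor; hence the degree map $\operatorname{Pic}(C)\to\mathbb{Z}$ is injective. Applied to a divisor $D$ of degree $1$ it gives $\ell(D)=2$, so $D$ is linearly equivalent to an effective degree-$1$ divisor, that is, a rational point of $C$; this contradicts the hypothesis that $C$ has no rational point. Consequently the image of the degree map is contained in $2\mathbb{Z}$ and contains $\deg[\infty]=2$, so $\operatorname{Pic}(C)=\mathbb{Z}\cdot[\infty]$ and $\operatorname{Pic}(U)=0$.

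I do not expect a serious obstacle here provided Riemann--Roch and the excision sequence may be invoked. If an elementary argument in the spirit of the paper's appendix is preferred, one could instead proceed by Galois descent from the PID $\VR_V = \VR_U\otimes_F K=K[u,u^{-1}]$: given an ideal $I\subset\VR_U$, its extension $I\cdot\VR_V=(f)$ is Galois stable, so $\iota(f)=\alpha u^n f$ for some $\alpha\in K^\times$ and $n\in\mathbb{Z}$, and one would modify $f$ by a unit of $\VR_V$ into a Galois-invariant generator. The parity of $n$ is the Hilbert~90 obstruction to this step, addressed via the identity $u\iota(u)\in F^\times$ from Remark~\ref{rem:explicit}; showing that $n$ is always even ultimately relies once more on the absence of rational points on $C$.
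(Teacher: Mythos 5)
Your main argument is correct, but it takes a genuinely different route from the paper. You treat $\VR_U$ as the coordinate ring of the smooth affine curve $U=C\setminus\{\infty\}$, hence a Dedekind domain, and kill its class group via the excision sequence together with Riemann--Roch on the genus-zero curve $C$: degree-zero classes are trivial, a degree-one class would produce a rational point, so $\operatorname{Pic}(C)=\mathbb{Z}\cdot[\infty]$ and $\operatorname{Pic}(U)=0$; this is complete and conceptually transparent, but it imports exactly the machinery (Riemann--Roch) that the appendix is designed to avoid --- the authors explicitly contrast their elementary treatment with the Riemann--Roch--based proofs of Parimala and Van~Geel. The paper instead proves the lemma by the Galois-descent computation that you only sketch as an alternative: for an ideal $I\subset\VR_U$, the extended ideal of $\VR_V=K[u,u^{-1}]$ is principal, say $f\VR_V$, stability under $\iota$ gives $\iota(f)f^{-1}=au^\alpha$ with $a\in K^\times$, and taking norms together with the identity $u\,\iota(u)=-q(s)q(t)$ shows that an odd $\alpha$ would force an equation $(cr+bq(s)^{-1}s)^2+t^2=0$, i.e.\ an $F$-rational point on $C$; with $\alpha=2\beta$ even, Hilbert~90 yields $b\in K^\times$ such that $bu^\beta f$ is $\iota$-invariant, whence $I=bu^\beta f\,\VR_U$. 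So you correctly identified both the structure of the paper's argument and its key difficulty (the parity of the exponent, settled by the absence of rational points), but carried out only the geometric proof; the paper's route buys a self-contained argument using nothing beyond Hilbert~90 and the explicit quaternionic description of $C$, while yours buys brevity at the cost of invoking Riemann--Roch, the excision sequence for class groups, and the fact that a Dedekind domain with trivial class group is a principal ideal domain.
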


\begin{proof}
  Let $I\subset\VR_U$ be an ideal. Since $\VR_V=K[u,u^{-1}]$ is a
  principal ideal domain, we may find $f\in\VR_V$ such that
  $I\otimes_FK=f\VR_V$. As $I\otimes_FK$ is preserved by $\iota$, we
  have $f\VR_V=\iota(f)\VR_V$, hence
  $\iota(f)f^{-1}\in\VR_V^\times=K^\times\oplus u^{\mathbb Z}$. Let
  $a\in K^\times$ and $\alpha\in\mathbb Z$ be such that
  \begin{equation}
    \label{eq:iotaf}
    \iota(f)f^{-1}=au^\alpha.
  \end{equation}
  Since $N_{KL/L}(\iota(f)f^{-1})=1$, it follows by~\eqref{eq:iotau}
  that
  \[
  N_{KL/L}(au^\alpha)=N_{K/F}(a)\bigl(-q(s)q(t)\bigr)^\alpha=1.
  \]
  If $\alpha$ is odd, let $\alpha=2\beta-1$ and
  $a\bigl(-q(s)q(t)\bigr)^\beta=b+crs$ with $b$, $c\in F$. Then
  $N_{K/F}(b+crs)= -q(s)q(t)$, hence
  \[
  (cr+bq(s)^{-1}s)^2+t^2=0.
  \]
  Thus, the conic $C$ has an $F$-rational point, a
  contradiction. Therefore, $\alpha$ is even. Let
  $\alpha=2\beta$. Then from \eqref{eq:iotau} and \eqref{eq:iotaf} we
  have
  \[
  \iota(u^\beta f)\cdot(u^\beta
  f)^{-1}=a\bigl(-q(s)q(t)\bigr)^\beta\in K^\times.
  \]
  By Hilbert's Theorem~90, we may find $b\in K^\times$ such that
  $a\bigl(-q(s)q(t)\bigr)^\beta=b\iota(b)^{-1}$. Then
  \[
  \iota(bu^\beta f)=bu^\beta f\in L^\times.
  \]
  Since $bu^\beta\in\VR_V^\times$, we have $f\VR_V=bu^\beta f\VR_V$,
  hence $I=bu^\beta f\VR_U$.
\end{proof}

\begin{defis}
  A \emph{vector bundle} over $C$ is a triple $\E=(E,E_U,E_\infty)$
  consisting of a finite-dimensional $L$-vector space $E$, a finitely
  generated $\VR_U$-module $E_U\subset E$, and a finitely generated
  $\VR_\infty$-module $E_\infty\subset E$ such that
  \[
  E=E_U\otimes_{\VR_U}L=E_\infty\otimes_{\VR_\infty}L.
  \]
  The \emph{rank} of $\E$ is $\rk \E=\dim E$. The intersection
  $E_U\cap E_\infty$ is an $F$-vector space called the space of
  \emph{global sections} of $\E$. We write
  \[
  \Gamma(\E)=E_U\cap E_\infty.
  \]
  The degree of a vector bundle over $C$ is defined as for vector
  bundles over $\Proj$: Since $\VR_U$ and $\VR_\infty$ are
  principal ideal domains, the $\VR_U$- and $\VR_\infty$-modules $E_U$
  and $E_\infty$ are free of rank $\rk\E$. Let $(e_i)_{i=1}^n$ (resp.\
  $(f_i)_{i=1}^n$) be a base of the $\VR_U$-module $E_U$ (resp.\ the
  $\VR_\infty$-module $E_\infty$). Each of these bases is an $L$-base
  of $E$, hence we may find a matrix $g=(g_{ij})_{i,j=1}^n\in\GL_n(L)$
  such that
  \begin{equation}
  \label{eq:g}
  e_j=\sum_{i=1}^nf_ig_{ij}\qquad\text{for $j=1$, \ldots, $n$.}
  \end{equation}
  The \emph{degree} $\deg\E$ is defined as
  \[
  \deg\E = 2v_\infty(\det g)\in\mathbb{Z}.
  \]
  To see that this integer does not depend on the choice of bases,
  observe that a change of bases substitutes for the matrix $g$ a
  matrix $g'$ of the form $g'=pgq$ for some $p\in\GL_n(\VR_\infty)$
  and $q\in\GL_n(\VR_U)$. We have $\det p\in\VR_S^\times$, hence
  $v_\infty(\det p)=0$. Likewise, $\det q\in\VR_U^\times$, hence
  $v(\det q)=0$ for every $F$-valuation $v$ of $L$ other than
  $v_\infty$. Since the degree of every principal divisor is zero, it
  follows that we also have $v_\infty(\det q)=0$. Therefore,
  $v_\infty(\det g)=v_\infty(\det g')$.

  A \emph{morphism} of vector bundles $(E,E_U,E_\infty)\to
  (E',E'_U,E'_\infty)$ over $C$ is an $L$-linear map $\varphi\colon
  E\to E'$ such that $\varphi(E_U)\subset E'_U$ and
  $\varphi(E_\infty)\subset E'_\infty$. When $\varphi\colon
  E\hookrightarrow E'$ is an inclusion map, the vector bundle
  $\E=(E,E_U,E_\infty)$ is said to be a \emph{subbundle} of
  $\E'=(E',E'_U,E'_\infty)$. If moreover $E_U=E\cap E'_U$ and
  $E_\infty=E\cap E'_\infty$, then the triple $(E'/E,E'_U/E_U,
  E'_\infty/E_\infty)$ is a vector bundle, which we call the
  \emph{quotient bundle} and denote by $\E'/\E$. In particular, for
  every morphism $\varphi\colon\E\to\E'$ we may consider a subbundle
  $\ker\varphi$ of $\E$ and, provided that
  $\varphi(E_U)=\varphi(E)\cap E'_U$ and
  $\varphi(E_\infty)=\varphi(E)\cap E'_\infty$, a vector bundle
  $\coker\varphi$, which is a quotient of $\E'$.
\end{defis}

\begin{example}
  \textit{Vector bundles of rank~$1$}. We use the representation of
  $C$ in Remark~\ref{rem:explicit}.
  The same arguments as in Example~\ref{ex:rk1} show that every vector
  bundle of rank~$1$ over $C$ is isomorphic to a triple
  $(L,\VR_U,(\frac xz)^n\VR_\infty)$ for some $n\in\mathbb{Z}$. The degree of
  this vector bundle is $2n$; therefore we write
  \[
  \VR_C(2n)=(L,\VR_U,\bigl(\frac xz\bigr)^n\VR_\infty).
  \]
  Note that for any $g\in L^\times$ we have as in \eqref{eq:On}
  \[
  (L,\VR_U,g\VR_\infty)\simeq \VR_C(-2v_\infty(g)).
  \]
  For the vector space of global sections we have
  \begin{align*}
  \Gamma(\VR_C(2n)) & =\{f\in\VR_U\mid v_\infty(f)\geq n\}\\
  & = \left\{f\bigl(\frac xz\bigr)+\frac yz g\bigl(\frac xz\bigr)\mid \deg
    f\leq n,\;\deg g\leq n-1\right\}. 
  \end{align*}
  Therefore,
  \begin{equation}
  \label{eq:dimGamma}
  \dim\Gamma(\VR_C(2n))=
  \begin{cases}
    2n+1 &\text{if $n\geq0$,}\\
    0 & \text{if $n<0$}.
  \end{cases}
  \end{equation}
\end{example}

We may therefore extend scalars of every vector bundle over $C$ to
get a vector bundle over $\Proj$: for any vector bundle
$\E=(E,E_U,E_\infty)$ over $C$, we define
\[
f^*(\E)=(E\otimes_FK,\: E_U\otimes_FK,\: E_\infty\otimes_FK).
\]
This $f^*(\E)$ is a vector bundle over $\Proj$ of rank $\rk
f^*(\E)=\rk\E$. If $K=F(\alpha)$, every vector in $E\otimes_FK$ has a
unique expression in the form $x\otimes1+y\otimes\alpha$ with $x$,
$y\in E$. This vector is in $E_U\otimes_FK$ (resp.\
$E_\infty\otimes_FK$) if and only if $x$, $y\in E_U$ (resp.\ $x$,
$y\in E_\infty$), hence
\begin{equation}
\label{eq:glsecext}
\Gamma\bigl(f^*(\E)\bigr) = \Gamma(\E)\otimes_FK.
\end{equation}
Since every $\VR_U$-base of $E_U$ is an $\VR_V$-base of
$E_U\otimes_FK$ and every $\VR_\infty$-base of $E_\infty$ is an
$\VR_S$-base of $E_\infty\otimes_FK$, we can compute the degree of $\E$
and the degree of $f^*(\E)$ with the same matrix $g\in \GL_n(L)$
(see~\eqref{eq:g}). We 
get $\deg\E=2v_\infty(\det g)$ and $\deg f^*(\E)=w_0(\det
g)+w_\infty(\det g)$. Because $w_0$ and $w_\infty$ are the two
valuations of $K(u)$ extending $v_\infty$, it follows that
\begin{equation}
\label{eq:degf}
\deg f^*(\E)=\deg\E.
\end{equation}
There is a construction in the opposite direction: every vector
bundle $\E'=(E',E'_V, E'_S)$ over $\Proj$ 
yields a vector bundle $f_*(\E')$ over $C$ by restriction of scalars,
i.e., by viewing $E'$ as a vector space over $L$, $E'_V$ as a module
over $\VR_U$, and $E'_S$ as a module over $\VR_\infty$. Thus, $\rk
f_*(\E')=2\rk\E'$, and
\[
\Gamma\bigl(f_*(\E')\bigr) = \Gamma(\E') \quad\text{(viewed as an
  $F$-vector space).}
\]

For the next proposition, we let $\iota$ denote the nontrivial
automorphism of $K(u)$ over $L$. For every $K(u)$-vector space $E'$,
we let ${}^\iota E'$ denote the twisted $K(u)$-vector space defined by
\[
{}^\iota E'=\{ {}^\iota x\mid x\in E'\}
\]
with the operations
\[
{}^\iota x+{}^\iota y={}^\iota(x+y) \qquad\text{and}\qquad ({}^\iota
x)\lambda = {}^\iota(x\iota(\lambda))
\]
for $x$, $y\in E'$ and $\lambda\in K(u)$. For every $\VR_V$-module
$E'_V$ and every $\VR_S$-module $E'_S$, the twisted modules ${}^\iota
E'_V$ and ${}^\iota E'_S$ are defined similarly. We may thus associate
a twisted vector bundle ${}^\iota \E'$ to every vector bundle $\E'$
over $\Proj$. Note that $\iota(u)\in u^{-1}F^\times$ (see
\eqref{eq:iotau}), hence $\iota$ interchanges 
the valuations $w_0$ and $w_\infty$. Therefore,
$w_0(\iota(\delta))+w_\infty(\iota(\delta))=w_0(\delta)+w_\infty(\delta)$
for every $\delta\in K(u)^\times$. It follows that $\deg{}^\iota
\E'=\deg\E'$; in particular, ${}^\iota\VR_{\Proj}(n)\simeq
\VR_{\Proj}(n)$ for all $n\in\mathbb{Z}$, and Grothendieck's theorem
(Theorem~\ref{thm:Gro}) yields ${}^\iota\E'\simeq\E'$ for every vector
bundle $\E'$ over $\Proj$.

\begin{prop}
  \label{prop:ff}
  \begin{enumerate}
  \item[(i)]
  For every vector bundle $\E$ over $C$, we have
  \[
  f_*f^*(\E)\simeq\E\oplus\E.
  \]
  \item[(ii)]
  For every vector bundle $\E'$ over $\Proj$, we have a canonical
  isomorphism
  \[
  f^*f_*(\E')\simeq \E'\oplus{}^\iota \E',
  \]
  and an isomorphism $f^*f_*(\E')\simeq \E'\oplus \E'$.
  \end{enumerate}
\end{prop}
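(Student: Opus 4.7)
The plan is to exploit the identifications $\VR_V = \VR_U\otimes_FK$, $\VR_S = \VR_\infty\otimes_FK$, and $K(u)=L\otimes_FK$ noted in \S\ref{sec:VBc}. In this way, each of the changes of rings $L\subset K(u)$, $\VR_U\subset\VR_V$, $\VR_\infty\subset\VR_S$ is simply scalar extension from $F$ to $K$, and both parts of the proposition reduce to properties of the degree two separable extension $K/F$.

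For part~(i), I fix any $\alpha\in K\setminus F$; since $[K:F]=2$, the pair $\{1,\alpha\}$ is simultaneously an $F$-basis of $K$, an $\VR_U$-basis of $\VR_V$, an $\VR_\infty$-basis of $\VR_S$, and an $L$-basis of $K(u)$. Applied component by component to $f^*(\E)=(E\otimes_FK,\,E_U\otimes_FK,\,E_\infty\otimes_FK)$, this basis produces mutually compatible splittings, each into two copies of the original module, yielding $f_*f^*(\E)\simeq\E\oplus\E$.

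For part~(ii), the main input is the isomorphism of $K$-algebras
\[
K\otimes_FK \longrightarrow K\times K,\qquad a\otimes b\longmapsto(ab,\,\iota(a)b),
\]
provided by the separability of $K/F$; here $\iota|_K$ is the nontrivial $F$-automorphism of $K$, which must coincide with the restriction of the $L$-automorphism $\iota$ of $K(u)$ (otherwise $\iota$ would fix $K\cdot L=K(u)$). Let $e_1,e_2\in K\otimes_FK$ be the associated orthogonal idempotents. The module $E'\otimes_FK$ carries two commuting $K$-actions, one from $K\subset K(u)$ acting on $E'$ and one from the right tensor factor, which together form a $K\otimes_FK$-action split by $e_1$ and $e_2$ as $E'\otimes_FK=P_1\oplus P_2$. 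Under the $K(u)$-module structure of $f^*f_*(\E')$, I expect the summand $P_1$ to be canonically isomorphic to $E'$ (the two $K$-actions coincide on $P_1$) and $P_2$ to be canonically isomorphic to ${}^\iota E'$ (the two $K$-actions differ by $\iota|_K$, which combined with the unchanged $L$-action produces the twist by the full $\iota$). Since the idempotents $e_1,e_2$ also lie in $\VR_V\otimes_{\VR_U}\VR_V$ and $\VR_S\otimes_{\VR_\infty}\VR_S$, the same decomposition is compatibly inherited by $E'_V\otimes_FK$ and $E'_S\otimes_FK$, giving the canonical isomorphism $f^*f_*(\E')\simeq\E'\oplus{}^\iota\E'$. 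The second isomorphism $f^*f_*(\E')\simeq\E'\oplus\E'$ then follows from ${}^\iota\E'\simeq\E'$, already noted in the paragraph preceding the statement.

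The main obstacle will be the precise identification $P_2\simeq{}^\iota\E'$. Splitting $E'\otimes_FK$ abstractly via the idempotents $e_1,e_2$ is immediate, but matching the $K(u)$-module structure on $P_2$ with the twist by the full automorphism $\iota$ of $K(u)$ requires the remark that a $K(u)$-module is determined by its $L$- and $K$-actions (since $K(u)=L\otimes_FK$): twisting only the $K$-part of the action by $\iota|_K$ while leaving the $L$-action alone is therefore the same as twisting the whole $K(u)$-action by $\iota$.
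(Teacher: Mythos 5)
Your proposal is correct and takes essentially the same route as the paper: part (i) is the same argument via the basis $\{1,\alpha\}$ of $K$ over $F$, and for part (ii) the paper simply writes down the explicit map $x\otimes\lambda\mapsto\bigl(x\lambda,\,({}^\iota x)\lambda\bigr)$, which is exactly the decomposition produced by your idempotents of $K\otimes_FK\simeq K\times K$. In particular the identification you flag as the main obstacle, $P_2\simeq{}^\iota E'$, is settled by the second component of that map, which is compatible with the lattices $E'_V\otimes_FK$ and $E'_S\otimes_FK$ just as you indicate.
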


\begin{proof}
  (i) Let $\alpha\in K$ be such that $K=F(\alpha)$. For every
  $L$-vector space $E$, mapping $x\otimes1+y\otimes\alpha$ to $(x,y)$
  for $x$, $y\in E$ defines an $L$-linear isomorphism
  $E\otimes_FK\stackrel{\sim}{\to} E\oplus E$. We thus get an
  isomorphism $f_*f^*(\E)\simeq\E\oplus\E$.

  (ii) For every $K(u)$-vector space $E'$, we identify $E'\otimes_FK$
  with $E'\otimes{}^\iota E'$ by mapping $x\otimes\lambda$ to
  $(x\lambda, ({}^\iota x)\lambda)$. We thus get a canonical
  isomorphism $f^*f_*(\E')\simeq\E'\oplus{}^\iota\E'$.
\end{proof}

\begin{cor}
  \label{cor:degf}
  For every vector bundle $\E'$ over $\Proj$,
  \[
  \deg f_*(\E')=2\deg\E'.
  \]
\end{cor}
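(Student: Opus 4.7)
The plan is to combine the two facts already established: Proposition~\ref{prop:ff}(ii), which gives $f^*f_*(\E')\simeq\E'\oplus\E'$, and equation~\eqref{eq:degf}, which gives $\deg f^*(\E)=\deg\E$ for every vector bundle $\E$ over~$C$. The bundle $f_*(\E')$ is a vector bundle over $C$, so applying \eqref{eq:degf} to it yields $\deg f^*f_*(\E')=\deg f_*(\E')$. Substituting via Proposition~\ref{prop:ff}(ii) then gives
\[
\deg f_*(\E')=\deg\bigl(\E'\oplus\E'\bigr)=2\deg\E',
\]
which is the desired conclusion.

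The only small detail one must justify is that the degree function is additive over direct sums of vector bundles over $\Proj$ (and, in the analogous argument, over $C$). This is immediate from the definition: if $\E_1'$ and $\E_2'$ are vector bundles over $\Proj$, one can take the concatenation of an $\VR_V$-base of $E'_{1,V}$ with one of $E'_{2,V}$, and similarly for the $\VR_S$-modules. The resulting transition matrix for $\E_1'\oplus\E_2'$ is block diagonal with blocks given by the transition matrices for $\E_1'$ and $\E_2'$, so its determinant factors and $w_0+w_\infty$ applied to the determinant adds up. Hence $\deg(\E_1'\oplus\E_2')=\deg\E_1'+\deg\E_2'$.

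There is no serious obstacle here; the statement is an immediate corollary of Proposition~\ref{prop:ff} and equation~\eqref{eq:degf}, and all the substance lies in those earlier results (the identification of $f^*f_*(\E')$ with $\E'\oplus{}^\iota\E'$, together with Grothendieck's classification being used via ${}^\iota\E'\simeq\E'$). The proof itself should occupy only a couple of lines.
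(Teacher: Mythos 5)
Your proposal is correct and follows exactly the paper's argument: apply \eqref{eq:degf} to $f_*(\E')$ and then use Proposition~\ref{prop:ff}(ii) to identify $f^*f_*(\E')$ with $\E'\oplus\E'$, together with additivity of degree on direct sums (which the paper already noted in the proof of Corollary~\ref{cor:glsecP}). Your extra justification of the additivity via block-diagonal transition matrices is fine but not a departure from the paper's route.
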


\begin{proof}
  Proposition~\ref{prop:ff}(ii) and \eqref{eq:degf} yield
  \[
  \deg f_*(\E')=\deg(\E'\oplus\E') = 2\deg\E'.
  \qedhere
  \]
\end{proof}

\begin{cor}
  \label{cor:fO}
  For every $n\in\mathbb{Z}$ we have
  \begin{enumerate}
  \item[(i)]
  $f^*\bigl(\VR_C(2n)\bigr) \simeq \VR_{\Proj}(2n)$,
  \item[(ii)]
  $f_*\bigl(\VR_{\Proj}(2n)\bigr)\simeq \VR_C(2n)\oplus \VR_C(2n)$.
  \end{enumerate}
  Moreover, $f_*\bigl(\VR_{\Proj}(2n+1)\bigr)$ is an indecomposable
  vector bundle of rank~$2$ and degree~$4n+2$ over $C$.
\end{cor}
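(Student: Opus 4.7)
Here is my plan.

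\textbf{Part (i).} I will unwind the definitions. By definition $\VR_C(2n)=\bigl(L,\VR_U,(x/z)^n\VR_\infty\bigr)$, so applying $f^*$ and using $\VR_U\otimes_FK=\VR_V$, $\VR_\infty\otimes_FK=\VR_S$ yields
\[
f^*\bigl(\VR_C(2n)\bigr)=\bigl(K(u),\VR_V,(x/z)^n\VR_S\bigr).
\]
Now the key point is that, since $\infty$ has residue field $K$ and $K/F$ is separable, the point $\infty$ splits in $C_K$, so $w_0$ and $w_\infty$ are the two distinct extensions of $v_\infty$ to $KL$, each with $e=f=1$. Therefore for any $g\in L^\times$ we have $w_0(g)=w_\infty(g)=v_\infty(g)$. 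Applied to $g=(x/z)^n$ (for which $v_\infty(x/z)=-1$) this gives $w_0(g)+w_\infty(g)=-2n$, and the isomorphism formula \eqref{eq:On} identifies the triple above with $\VR_{\Proj}(2n)$.

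\textbf{Part (ii).} This is immediate: combine (i) with Proposition~\ref{prop:ff}(i), which gives $f_*f^*\bigl(\VR_C(2n)\bigr)\simeq\VR_C(2n)\oplus\VR_C(2n)$.

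\textbf{Indecomposability of $f_*\bigl(\VR_{\Proj}(2n+1)\bigr)$.} The rank is $2$ by the very definition of $f_*$, and the degree equals $2(2n+1)=4n+2$ by Corollary~\ref{cor:degf}. For indecomposability, suppose for contradiction that $f_*\bigl(\VR_{\Proj}(2n+1)\bigr)\simeq\E_1\oplus\E_2$ with each $\E_i$ of rank~$1$. The classification of rank-$1$ vector bundles over $C$ (established in the preceding example) gives $\E_i\simeq\VR_C(2m_i)$ for some $m_i\in\mathbb{Z}$; in particular each $\deg\E_i$ is even. Pulling back by $f^*$ and applying part~(i), we would obtain
\[
\VR_{\Proj}(2m_1)\oplus\VR_{\Proj}(2m_2)\simeq f^*f_*\bigl(\VR_{\Proj}(2n+1)\bigr)\simeq\VR_{\Proj}(2n+1)\oplus\VR_{\Proj}(2n+1),
\]
where the second isomorphism is Proposition~\ref{prop:ff}(ii). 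The uniqueness part of Grothendieck's classification (remark following Corollary~\ref{cor:glsecP}) forces $\{2m_1,2m_2\}=\{2n+1,2n+1\}$, which is impossible on parity grounds. Hence $f_*\bigl(\VR_{\Proj}(2n+1)\bigr)$ is indecomposable.

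The only step with any subtlety is verifying $w_0(g)=w_\infty(g)=v_\infty(g)$ for $g\in L$ in part~(i); the rest is formal bookkeeping with the results already proved in the appendix. I would expect the parity argument in the indecomposability step to be the cleanest obstacle to address, but it is transparent once rank-$1$ bundles over $C$ are known to have even degree.
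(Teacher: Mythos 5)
Your proposal is correct and follows essentially the same route as the paper: unwind $f^*\VR_C(2n)$ and apply the rank-one normalization \eqref{eq:On}, deduce (ii) from Proposition~\ref{prop:ff}(i), and rule out a decomposition of $f_*\bigl(\VR_{\Proj}(2n+1)\bigr)$ by pulling back via $f^*$ and invoking the uniqueness in Grothendieck's theorem together with the parity of degrees of rank-one bundles on $C$. Your explicit check that $w_0$ and $w_\infty$ restrict to $v_\infty$ on $L$ is exactly the fact the paper uses implicitly (it underlies \eqref{eq:degf}), so there is no substantive difference.
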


\begin{proof}
  From the definitions of $\VR_C(2n)$ and $f^*$, we have
  \[
  f^*\bigl(\VR_C(2n)\bigr) = (K(u),\VR_V,t^n\VR_S).
  \]
  By~\eqref{eq:On} it follows that
  \[
  f^*\bigl(\VR_C(2n)\bigr) \simeq \VR_{\Proj}(-w_0(t^n)-w_\infty(t^n))
  = \VR_{\Proj}(2n).
  \]
  This proves~(i). Moreover, applying $f_*$ to each side, we get
  \[
  f_*\bigl(\VR_{\Proj}(2n)\bigr)\simeq f_*f^*\bigl(\VR_C(2n)\bigr),
  \]
  and (ii) follows from Proposition~\ref{prop:ff}(i).

  By definition, it is clear that $f_*\bigl(\VR_{\Proj}(2n+1)\bigr)$
  is a vector bundle of rank~$2$. Corollary~\ref{cor:degf} shows that
  its degree is $4n+2$, and it only remains to show that this vector
  bundle is indecomposable. Any nontrivial decomposition involves two
  vector bundles of rank~$1$, and has therefore the form
  \[
  f_*\bigl(\VR_{\Proj}(2n+1)\bigr) \simeq \VR_C(2m_1) \oplus
  \VR_C(2m_2)
  \]
  for some $m_1$, $m_2\in\mathbb{Z}$. By applying $f^*$ to each side
  and using~(i) and Proposition~\ref{prop:ff}(ii), we obtain
  \[
  \VR_{\Proj}(2n+1)\oplus \VR_{\Proj}(2n+1) \simeq \VR_{\Proj}(2m_1)
  \oplus \VR_{\Proj}(2m_2).
  \]
  This is a contradiction because the Grothendieck decomposition in
  Theorem~\ref{thm:Gro} is unique up to permutation of the summands.
\end{proof}

We write $\Indec_C(4n+2)=f_*\bigl(\VR_{\Proj}(2n+1)\bigr)$. In the rest of
this section, our goal is to prove that every vector bundle over $C$
decomposes in a unique way in a direct sum of vector bundles of the
form $\VR_C(2n)$ and $\Indec_C(4n+2)$.

\begin{prop}
  \label{prop:dimglsecfinite}
  For every vector bundle $\E$ over $C$, the space of global sections
  $\Gamma(\E)$ is finite-dimensional.
\end{prop}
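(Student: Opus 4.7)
The plan is to reduce to the corresponding statement over $\Proj$, which has already been established in Corollary~\ref{cor:glsecP}. The bridge is provided by the base change functor $f^*$ introduced above, together with the identity~\eqref{eq:glsecext} relating global sections before and after base change to $K$.

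Concretely, given a vector bundle $\E$ over $C$, I would first form the vector bundle $f^*(\E)$ over $\Proj$. By Grothendieck's Theorem~\ref{thm:Gro}, $f^*(\E)$ decomposes as a finite direct sum of line bundles $\VR_{\Proj}(k_i)$, and then Corollary~\ref{cor:glsecP} gives that $\Gamma\bigl(f^*(\E)\bigr)$ is a finite-dimensional $K$-vector space, with explicit dimension $\sum_i\max(1+k_i,0)$.

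Next, I would invoke~\eqref{eq:glsecext}, which identifies $\Gamma\bigl(f^*(\E)\bigr)$ with $\Gamma(\E)\otimes_FK$. Since $K/F$ is a finite extension, finite-dimensionality of $\Gamma(\E)\otimes_FK$ as a $K$-vector space is equivalent to finite-dimensionality of $\Gamma(\E)$ as an $F$-vector space; in fact $\dim_F\Gamma(\E)=\dim_K\Gamma\bigl(f^*(\E)\bigr)$. This concludes the proof.

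There is essentially no obstacle here: both ingredients (the classification over $\Proj$ and the compatibility of global sections with base change to $K$) have been proved in the appendix, and neither relies on any fact about $\E$ beyond the definitions. The argument is a formal reduction, and no further analysis of $\VR_U$ or $\VR_\infty$ is needed.
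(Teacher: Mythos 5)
Your argument is exactly the paper's proof: the paper deduces the proposition directly from the identity $\Gamma\bigl(f^*(\E)\bigr)=\Gamma(\E)\otimes_FK$ in~\eqref{eq:glsecext} together with Corollary~\ref{cor:glsecP}, which is precisely the reduction you carry out. Your proposal is correct and follows the same route, just spelled out in a bit more detail.
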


\begin{proof}
  This readily follows from~\eqref{eq:glsecext} and
  Corollary~\ref{cor:glsecP}. 
\end{proof}

\begin{cor}
  \label{cor:EndB}
  For every vector bundle $\E$ over $C$, the $F$-algebra $\End\E$ is
  finite-dimensional. Moreover, the idempotents in $\End\E$ split: any
  idempotent $e\in\End\E$ yields a decomposition $\E=\ker e \oplus
  \image e$. If $\E$ does not decompose into a sum of nontrivial vector
  bundles, then $\End\E$ is a local ring (i.e., the noninvertible
  elements form an ideal).
\end{cor}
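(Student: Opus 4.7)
The plan is to establish the three assertions in succession. For the finite-dimensionality of $\End\E$, I would introduce the internal-Hom vector bundle $\calEnd(\E)$ over $C$ as the triple $(\End_L(E),\,\End_{\VR_U}(E_U),\,\End_{\VR_\infty}(E_\infty))$; the two local modules are free (since $E_U$ and $E_\infty$ are free over the respective PIDs) and sit inside $\End_L(E)$ via extension of scalars, and they visibly tensor up to $\End_L(E)$ over $L$. Thus $\calEnd(\E)$ is a vector bundle of rank $(\rk\E)^2$, and an $L$-endomorphism of $E$ preserves $E_U$ and $E_\infty$ exactly when it lies in the intersection $\End_{\VR_U}(E_U)\cap\End_{\VR_\infty}(E_\infty)$, so $\End\E=\Gamma(\calEnd(\E))$. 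Finite-dimensionality then follows from Proposition~\ref{prop:dimglsecfinite}.

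For the splitting of idempotents, given $e\in\End\E$ with $e^2=e$, the three restrictions of $e$ to $E$, $E_U$, $E_\infty$ are idempotent endomorphisms of modules over $L$, $\VR_U$, $\VR_\infty$ respectively, so each splits as a direct sum of its kernel and image. The key compatibility is that $(\image e)\cap E_U=e(E_U)$ (because any $v\in\image e$ satisfies $v=e(v)$) and $(\ker e)\cap E_U=\ker(e|_{E_U})$, with the analogous identities at $\infty$. Together with the flatness of $L$ over $\VR_U$ and $\VR_\infty$, these identifications show that the triples $(\ker e,\ker(e|_{E_U}),\ker(e|_{E_\infty}))$ and $(\image e,e(E_U),e(E_\infty))$ are genuine vector bundles, and that $\E$ is their direct sum.

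Finally, if $\E$ is indecomposable, the second step shows that the only idempotents in $\End\E$ are $0$ and $1$. Since $\End\E$ is finite-dimensional over $F$ by the first step, it is Artinian, so its Jacobson radical $J$ is nilpotent and the quotient $\End\E/J$ is semisimple, i.e., a finite product of matrix algebras over division rings. Idempotents lift modulo any nil ideal (via the iterated Newton step $e_0\mapsto 3e_0^2-2e_0^3$), so $\End\E/J$ also has only trivial idempotents; this forces $\End\E/J$ to be a single division ring, so $J$ is the unique maximal ideal of $\End\E$ and $\End\E$ is local. The main technical point throughout is really the verification in the second step that the kernel and image of an idempotent morphism satisfy the compatibility conditions that make them subbundles of $\E$; everything else is either a direct appeal to Proposition~\ref{prop:dimglsecfinite} or standard structure theory for finite-dimensional algebras.
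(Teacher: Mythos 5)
Your proposal is correct and follows essentially the same route as the paper: identify $\End\E$ with $\Gamma(\calEnd\E)$ for $\calEnd\E=(\End_LE,\End_{\VR_U}E_U,\End_{\VR_\infty}E_\infty)$ and invoke Proposition~\ref{prop:dimglsecfinite}, split an idempotent by checking that its kernel and image give compatible decompositions of $E$, $E_U$, $E_\infty$, and conclude locality from the absence of nontrivial idempotents in a finite-dimensional (hence Artinian) algebra. The only difference is cosmetic: the paper cites Lam \cite[Cor.~(19.19)]{Lam} for the last step, whereas you reprove that standard fact via idempotent lifting modulo the nilpotent radical, which is fine.
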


\begin{proof}
  For $\E=(E,E_U,E_\infty)$, we have $\End\E=\Gamma(\calEnd\E)$ where
  \[
  \calEnd\E = (\End_LE,\: \End_{\VR_U}E_U,\:
  \End_{\VR_\infty}E_\infty).
  \]
  Therefore, Proposition~\ref{prop:dimglsecfinite} shows that the
  dimension of $\End\E$ is finite. This algebra is therefore right
  (and left) Artinian. If $e\in\End\E$ is an idempotent, then for
  every vector $x\in E$ we have $x=\bigl(x-e(x)\bigr)+e(x)$, hence
  \[
  E=\ker e\oplus\image e,\qquad E_U=(E_U\cap\ker e)\oplus
  (E_U\cap\image e),
  \]
  \[
  E_\infty=(E_\infty\cap\ker e)\oplus
  (E_\infty\cap\image e).
  \]
  This shows that $e$ splits. If $\E$ is indecomposable, then $\End\E$
  has no nontrivial idempotents. It follows from Lam
  \cite[Cor.~(19.19)]{Lam} that $\End\E$ is a local ring.
\end{proof}

The properties of $\End\E$ established in Corollary~\ref{cor:EndB}
allow us to use the general approach to the Krull--Schmidt theorem in
Bass \cite[Ch.~I, (3.6)]{Bass} (see also Lam \cite[(19.21)]{Lam}) to
derive the following ``Krull--Schmidt'' result:

\begin{cor}
  \label{cor:KS}
  Every vector bundle over $C$ decomposes into a sum of
  indecomposable vector bundles, and the decomposition is unique up
  to isomorphism and the order of summands.
\end{cor}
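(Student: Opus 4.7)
The plan is to treat existence and uniqueness separately, exploiting the two facts already extracted in Corollary~\ref{cor:EndB}: for every vector bundle $\E$ over $C$, the endomorphism ring $\End\E$ is a finite-dimensional $F$-algebra (hence right Artinian), and if in addition $\E$ is indecomposable then $\End\E$ is a local ring. These are exactly the hypotheses of the Krull--Schmidt--Azumaya theorem as formulated in \cite{Bass} and \cite{Lam}.

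For existence I would induct on $\rk\E$. If $\rk\E=0$ then $\E$ is the zero bundle, which is an empty direct sum of indecomposables. If $\E$ is indecomposable the statement is immediate. Otherwise write $\E\simeq\E_1\oplus\E_2$ with both summands nonzero; then $\rk\E_i<\rk\E$ for $i=1,2$, so the induction hypothesis applies to each $\E_i$ and produces a decomposition of $\E$ into finitely many indecomposables. Finiteness of this process is guaranteed by the rank being a nonnegative integer that strictly decreases on nontrivial summands.

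For uniqueness, suppose $\E\simeq\E_1\oplus\cdots\oplus\E_m\simeq\E'_1\oplus\cdots\oplus\E'_n$ with each $\E_i$ and each $\E'_j$ indecomposable. By Corollary~\ref{cor:EndB} each $\End\E_i$ and each $\End\E'_j$ is local. I would then run the standard exchange argument: composing the projection $\E\to\E_1$ coming from the first decomposition with the inclusions $\E'_j\hookrightarrow\E$ coming from the second gives maps $\varphi_j\colon\E'_j\to\E_1$, whose sum after precomposing with the projections $\E_1\to\E\to\E'_j$ is $\Id_{\E_1}$. Since $\End\E_1$ is local, some $\varphi_j\circ\psi_j$ must be a unit, and the splitting of idempotents (again Corollary~\ref{cor:EndB}) promotes this to an isomorphism $\E_1\simeq\E'_j$ together with a matching summand decomposition. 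Peeling off $\E_1$ and $\E'_j$ and iterating yields $m=n$ and a permutation $\sigma$ with $\E_i\simeq\E'_{\sigma(i)}$.

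I do not anticipate a serious obstacle: the genuinely geometric input -- finite-dimensionality of $\End\E$ (from the finite-dimensionality of global sections, Proposition~\ref{prop:dimglsecfinite}) and splitting of idempotents on triples $(E,E_U,E_\infty)$ -- has already been secured in Corollary~\ref{cor:EndB}. Consequently the proof reduces to invoking \cite[Ch.~I, (3.6)]{Bass} or \cite[(19.21)]{Lam} verbatim, which is precisely what the paragraph preceding the corollary advertises.
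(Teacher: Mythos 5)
Your proposal is correct and matches the paper's argument: the paper likewise obtains existence by induction on the rank and derives uniqueness by invoking the Krull--Schmidt machinery of Bass and Lam, using exactly the facts from Corollary~\ref{cor:EndB} (finite-dimensional endomorphism algebras, splitting of idempotents, locality of $\End\E$ for indecomposable $\E$). Your sketch of the exchange argument is just the content of the cited references, so there is nothing genuinely different here.
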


Note that the existence of a decomposition into indecomposable vector
bundles is clear by induction on the rank.

\begin{thm}
  \label{thm:classbnd}
  Every vector bundle $\E$ over $C$ has a decomposition of the form
  \[
  \E\simeq \VR_C(2k_1)\oplus\cdots \oplus\VR_C(2k_r) \oplus
  \Indec_C(4\ell_1+2) \oplus\cdots\oplus \Indec_C(4\ell_m+2)
  \]
  for some $k_1$, \ldots, $k_r$, $\ell_1$, \ldots,
  $\ell_m\in\mathbb{Z}$. The sequences $(k_1,\ldots, k_r)$ and
  $(\ell_1,\ldots,\ell_m)$ are uniquely determined by $\E$ up to
  permutation of the entries.
\end{thm}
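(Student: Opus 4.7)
The plan is to prove existence by reducing via Krull--Schmidt (Corollary~\ref{cor:KS}) to the case where $\E$ is indecomposable, and then to classify the indecomposables by pulling back to $\Proj$, applying Grothendieck's theorem, and pushing back down. Uniqueness follows almost immediately from Krull--Schmidt once the building blocks are known to be indecomposable and pairwise non-isomorphic.

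For uniqueness, I would first check that the bundles $\VR_C(2k)$ and $\Indec_C(4\ell+2)$ are indecomposable and mutually non-isomorphic. The bundles $\VR_C(2k)$ are of rank~$1$, so indecomposable; the bundles $\Indec_C(4\ell+2)$ are indecomposable by the final assertion of Corollary~\ref{cor:fO}. The two families are distinguished by rank ($1$ vs.\ $2$), and within each family by degree ($2k$ or $4\ell+2$). Thus Corollary~\ref{cor:KS} directly gives uniqueness of the multiplicities.

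For existence, let $\E$ be an indecomposable vector bundle over $C$; by Corollary~\ref{cor:KS} it suffices to handle this case. By Theorem~\ref{thm:Gro}, we have
\[
f^*(\E)\simeq \VR_{\Proj}(k_1)\oplus\cdots\oplus\VR_{\Proj}(k_n)
\]
for some integers $k_i$. Applying $f_*$ and using Proposition~\ref{prop:ff}(i) yields
\[
\E\oplus\E \simeq f_*f^*(\E) \simeq \bigoplus_{i=1}^{n} f_*\bigl(\VR_{\Proj}(k_i)\bigr).
\]
Now I would split the sum according to the parity of $k_i$. By Corollary~\ref{cor:fO}(ii), each summand with $k_i=2m$ contributes $\VR_C(2m)\oplus\VR_C(2m)$; by the last sentence of Corollary~\ref{cor:fO}, each summand with $k_i=2m+1$ contributes the indecomposable bundle $\Indec_C(4m+2)$. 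Hence $\E\oplus\E$ has been exhibited as a direct sum of bundles of the two required indecomposable types.

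At this point the Krull--Schmidt theorem (Corollary~\ref{cor:KS}) applied to $\E\oplus\E$ forces the trivial decomposition $\E\oplus\E$ to agree, up to reordering, with the one just constructed; since $\E$ is indecomposable, it must be isomorphic to one of the $\VR_C(2m)$ or $\Indec_C(4m+2)$ appearing above. This completes the classification of indecomposables and hence the theorem. The only real point requiring care is making sure every summand of the derived decomposition of $\E\oplus\E$ is genuinely indecomposable, so that Krull--Schmidt can be applied to extract $\E$; this is precisely what Corollary~\ref{cor:fO} provides, and everything else is bookkeeping with results established earlier.
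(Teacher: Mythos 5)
Your proposal is correct and follows essentially the same route as the paper: reduce to an indecomposable $\E$ via Corollary~\ref{cor:KS}, apply $f^*$ and Theorem~\ref{thm:Gro}, push back down with $f_*$ using Proposition~\ref{prop:ff}(i) and Corollary~\ref{cor:fO}, and invoke Krull--Schmidt on $\E\oplus\E$ to identify $\E$ with one of the bundles $\VR_C(2m)$ or $\Indec_C(4m+2)$. Your explicit remark that the building blocks are pairwise non-isomorphic (by rank and degree) is a point the paper leaves implicit, but it is the same argument.
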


\begin{proof}
  In view of Corollary~\ref{cor:KS}, it only remains to show that the
  vector bundles $\VR_C(2k)$ and $\Indec_C(4\ell+2)$ are the only
  indecomposable vector bundles over $C$ up to isomorphism. Suppose
  $\E$ is an indecomposable vector bundle over $C$. Grothendieck's
  theorem (Theorem~\ref{thm:Gro}) yields integers $n_1$, \ldots,
  $n_p\in\mathbb{Z}$ such that
  \[
  f^*(\E)\simeq \VR_{\Proj}(n_1)\oplus\cdots\oplus \VR_{\Proj}(n_p).
  \]
  Applying $f_*$ to each side, we get by Proposition~\ref{prop:ff}(i)
  \[
  \E\oplus\E \simeq f_*\bigl(\VR_{\Proj}(n_1)\bigr) \oplus \cdots\oplus
  f_*\bigl(\VR_{\Proj}(n_p)\bigr).
  \]
  If $n_1$ is even, then $f_*\bigl(\VR_{\Proj}(n_1)\bigr)\simeq
  \VR_C(n_1)\oplus \VR_C(n_1)$ by Corollary~\ref{cor:fO}, hence $p=1$
  and $\E\simeq\VR_C(n_1)$. If $n_1$ is odd, then
  $f_*\bigl(\VR_{\Proj}(n_1)\bigr)$ is indecomposable by
  Corollary~\ref{cor:fO}, hence we must have $\E\simeq
  f_*\bigl(\VR_{\Proj}(n_1)\bigr)=\Indec_C(2n_1)$ (and $p=2$, and $n_2=n_1$).
\end{proof}

\begin{example}
  \textit{The tautological vector bundle.}
  We use the representation of $C$ in Remark~\ref{rem:explicit}. Let
  \[
  \Q_C=\VR_C(0)\otimes_FQ=(Q_L,Q_U,Q_\infty)
  \]
  where $Q_L=L\otimes_FQ$, $Q_U=\VR_U\otimes_FQ$,
  $Q_\infty=\VR_\infty\otimes_FQ$. 
  Consider the element 
  \[
  e:=\frac{x}{z}r+\frac{y}{z}s+t\in Q_L
  \]
  and the $2$-dimensional right ideal $E=eQ_L$. We define the bundle
  $\T=(E, E_U, E_{\infty})$ by
  \[
  E_U=E\cap Q_U \quad \text{and} \quad E_{\infty}=E\cap Q_{\infty}.
  \]
\begin{lemma}
We have
\begin{enumerate}
  \item[(a)] $E_U=eQ\cdot\VR_U=er\VR_U \oplus es\VR_U$,
  \item[(b)] $E_{\infty}=e\frac
    zyQ\cdot\VR_\infty=e\frac{z}{y}r\VR_{\infty}\oplus 
    e\frac{z}{y}t\VR_{\infty}$. 
\end{enumerate}
\end{lemma}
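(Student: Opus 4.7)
The plan is to exploit the nilpotence $e^2 = 0$ together with Nakayama's lemma applied locally at each closed point of $C$. First I would verify $e^2 = 0$: since $t$ is orthogonal to $r$ and $s$ for the polar form $b_q$, we have $tr = -rt$ and $ts = -st$, so $(xr+ys)(zt) + (zt)(xr+ys) = 0$; combining this with $(xr+ys)^2 + z^2 t^2 = 0$ (the conic equation) gives $(xr+ys+zt)^2 = 0$, and dividing by $z^2$ yields $e^2 = 0$. Expanding $e \cdot e = 0$ in the $F$-basis $\{1,r,s,t\}$ of $Q$ yields the fundamental identity
\[
  et = -\tfrac{x}{z}\,er - \tfrac{y}{z}\,es.
\]

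The next step, and the main technical input, is a dimension count. Since $L$ splits $Q$, we have $Q_L \simeq M_2(L)$, and the nonzero nilpotent $e$ corresponds to a rank-$1$ matrix, so $E = eQ_L$ is $2$-dimensional over $L$. A direct computation in $M_2(L)$ shows that the right annihilator $\{q \in Q_L : eq = 0\}$ meets the pure quaternions $V_L$ in precisely the line $L \cdot e$. From this I would deduce the $L$-linear independence of $er$ and $es$: any relation $e(r\lambda + s\mu) = 0$ forces $r\lambda + s\mu \in L \cdot e$, and comparing $t$-coefficients gives $\lambda=\mu=0$. By the same argument with $s$-coefficients, $e'r$ and $e't$ are $L$-linearly independent, where $e' := e\cdot \tfrac{z}{y} = \tfrac{x}{y}r + s + \tfrac{z}{y}t$. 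Note $e' \in Q_\infty$ because $v_\infty(\tfrac{x}{y})=0$ and $v_\infty(\tfrac{z}{y})=1$, and $e'Q_L = E$.

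For part~(a), I would establish the chain
\[
  er\,\VR_U + es\,\VR_U \;\subseteq\; eQ\cdot\VR_U \;\subseteq\; E_U,
\]
using that $e \in Q_U$ and that $E_U$ is a right $Q_U$-submodule. Equality throughout follows from the reverse inclusion $E_U \subseteq er\,\VR_U + es\,\VR_U$, which can be checked locally since both sides are free $\VR_U$-modules of rank $2$. At each closed point $P$ of $U$, the reduction $\bar e$ of $e$ is nonzero (its $t$-coefficient is $1$) and nilpotent, forcing $\kappa(P)\otimes_F Q$ to split; the same $M_2$-dimension count then gives $\overline{er},\overline{es}$ as a $\kappa(P)$-basis of the two-dimensional fiber of $E_U$, and Nakayama's lemma supplies $(er,es)$ as an $\VR_P$-basis of the stalk. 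Part~(b) is entirely parallel, with $e'$ in place of $e$: the analogous identity $e's = -\tfrac{x}{y}\,e'r - \tfrac{z}{y}\,e't$ (coming from $(e')^2 = 0$) exhibits the claimed right-hand side, and at the unique closed point $\infty$ one verifies that $\overline{e'r},\overline{e't}$ are $K$-linearly independent in $Q_K \simeq M_2(K)$, where $Q_K$ is split by the original choice of $K$. A single Nakayama step then gives $(e'r,e't)$ as an $\VR_\infty$-basis. The main obstacle is the dimension count for the right annihilator of $e$ (and of its residues $\bar e$) intersected with the pure quaternions; everything else is a routine local check.
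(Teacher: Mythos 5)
Your proof is correct, but it takes a genuinely different route from the paper's. The paper argues globally: from $e^2=0$ it gets $et=-\frac{x}{z}er-\frac{y}{z}es$, multiplies this relation by $\ell=rs$ (resp.\ $m=rt$) and uses that $(r,s,t,t\ell)$ is an $F$-basis of $Q$ to obtain $eQ\subset er\VR_U+es\VR_U$; then, writing an arbitrary $\xi=er\lambda+es\mu\in E_U$, it extracts $\lambda\in\VR_U$ from the conjugation identity $\xi s-s\overline{\xi}=(rs-sr)t\lambda\in Q_U$ and then $\mu\in\VR_U$ from $e\mu\in Q_U$ --- no localization, no Nakayama. You instead check the reverse inclusion stalkwise: at each closed point the reduction of $e$ (resp.\ of $e\frac{z}{y}$ at $\infty$) is a nonzero square-zero element, so the residue algebra splits, the annihilator-of-a-rank-one-nilpotent count shows the reduced generators are independent in the $2$-dimensional fibre, and Nakayama upgrades them to a local basis; a local-global check then yields the inclusion over $\VR_U$. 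Both arguments are sound; yours is more geometric (it exhibits $\T$ as locally free with explicit local frames and would generalize to tautological bundles on other Severi--Brauer varieties, and it isolates the pleasant fact that residue fields of closed points of $C$ split $Q$), while the paper's is more elementary and self-contained, in keeping with the spirit of the appendix. Two small repairs for your write-up: in characteristic $2$ the set $\{1,r,s,t\}$ is \emph{not} an $F$-basis of $Q$ (there $1$ lies in the trace-zero space $V$), but this is harmless since $et=-\frac{x}{z}er-\frac{y}{z}es$ follows from distributing $e\cdot e=0$ without any basis, and your coefficient comparisons should be carried out inside $V$ (or $V\otimes\kappa(P)$) with its basis $(r,s,t)$, which works in every characteristic; and you should note explicitly that each stalk $E\cap Q_P$ is finitely generated, torsion-free of generic rank $2$ over the discrete valuation ring $\VR_P$, hence free of rank $2$, so that its fibre is indeed $2$-dimensional and the Nakayama step applies.
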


\begin{proof}
We first note that
\begin{equation}\label{main}
e\frac{x}{z}r+ e\frac{y}{z}s +et=e^2=0.
\end{equation}
Since $er\VR_U+es\VR_U\subset E_U$, to prove~(a) it suffices to show
$E_U\subset eQ\cdot\VR_U$ and $eQ\subset er\VR_U+es\VR_U$. We start
with the second inclusion.

It follows from \eqref{main} that 
\begin{equation}\label{1}
et=-e\frac{x}{z}r- e\frac{y}{z}s \in er\VR_U + es\VR_U.
\end{equation}
Write $\ell:=rs\in Q$. Note that $\ell\notin F$ and
$(rF+sF)\ell=rF+sF$. Multiplying \eqref{1} by $\ell$ on the right, we
then get 
\begin{equation}\label{12}
et\ell=-e\frac{x}{z}r\ell- e\frac{y}{z}s\ell \in er\ell\VR_U+
es\ell\VR_U=er\VR_U+ es\VR_U. 
\end{equation}
Also $t\ell\notin V$: for if $t\ell\in V$ then $V\ell=V$, hence $\ell$
lies in the orthogonal of $V$ for the bilinear form $\Trd_Q(XY)$; it
follows that $\ell\in F$, a contradiction. Therefore, $(r,s,t,t\ell)$
is a base of $Q$. The inclusion $eQ\subset er\VR_U+es\VR_U$ follows
from~\eqref{1} and \eqref{12}.  

We next show $E_U\subset eQ\cdot\VR_U$. Equations~\eqref{1} and
\eqref{12} show that $eQ_L$ is spanned by $er$ and $es$, hence every
element $\xi\in E_U$ has the form $\xi=er\lambda+es\mu$ for some
$\lambda$, $\mu\in L$. We show that the hypothesis $\xi\in
Q_U$ implies $\lambda$, $\mu\in\VR_U$. Let $\invo$ denote the
quaternion conjugation. Since $\xi\in Q_U$, we have $\xi
s-s\overline\xi\in Q_U$. Computation yields
\[
\xi s-s\overline\xi= (ers-sre)\lambda=(trs-srt)\lambda.
\]
By the choice of $t$ we have $b_q(t,r)=b_q(t,s)=0$, hence $t$
anticommutes with $r$ and $s$, and therefore
\[
\xi s-s\overline\xi=(rs-sr)t\lambda.
\]
Since $rs-sr\neq0$ and $\xi s-s\overline\xi\in Q_U$, it follows that
$\lambda\in\VR_U$. Therefore, $es\mu=\xi-er\lambda\in Q_U$, hence
$e\mu\in Q_U$. It follows that $\mu\in\VR_U$, because $e\mu=r\frac
xz\mu+s\frac yz\mu+t\mu$. The proof of~(a) is thus complete.

The proof of (b) is similar. Since $e\frac zyr\VR_\infty+e\frac
zyt\VR_\infty\subset E_\infty$, it suffices to prove $E_\infty\subset
e\frac zyQ\cdot\VR_\infty$ and $eQ\subset
er\VR_\infty+es\VR_\infty$. We again start with the second inclusion.

It follows from \eqref{main} that 
\begin{equation}\label{2}
es=-e\frac{x}{y}r- e\frac{z}{y}t \in er\VR_{\infty} + et\VR_{\infty}.
\end{equation}
Write $m:=rt\in Q$. Note that $m\notin F$ and
$(rF+tF)m=rF+tF$. Multiplying \eqref{2} by $m$ on the right, we then get
\begin{equation}\label{22}
esm=-e\frac{x}{y}rm- e\frac{z}{y}tm \in erm\VR_{\infty} +
etm\VR_{\infty} =er\VR_{\infty} + et\VR_{\infty}. 
\end{equation}
Also $sm\notin V$ since $Vm\neq V$. Therefore, $(r,s,t,sm)$ is a base
of $Q$. The inclusion $eQ\subset er\VR_\infty+es\VR_\infty$ follows
from~\eqref{2} and \eqref{22}.

It also follows from~\eqref{2} and \eqref{22} that $eQ_L$ is spanned
by $e\frac zyr$ and $e\frac zyt$, hence every element $\xi\in
E_\infty$ has the form $\xi=e\frac zyr\lambda+e\frac zyt\mu$ for some
$\lambda$, $\mu\in L$. We show that $\xi\in Q_\infty$ implies
$\lambda$, $\mu\in\VR_\infty$. Since $t$ anticommutes with $r$ and
$s$, we have
\[
\xi t-t\overline\xi=(ert-tre)\frac zy\lambda=(sr-rs)t\lambda.
\]
Because $\xi t-t\overline\xi\in Q_\infty$, it follows that
$\lambda\in\VR_\infty$. Then $\xi-e\frac zyr\lambda=e\frac zyt\mu\in
Q_\infty$, and it follows that $\mu\in\VR_\infty$.
\end{proof}

It follows from \eqref{2} that the change of base matrix between the
bases $(er, es)$ and $(e\frac{z}{y}r, e\frac{z}{y}t)$ is equal to 
\[
\begin{pmatrix}
         \frac{y}{z} & -\frac{x}{z} \\
         0 & -1 \\
\end{pmatrix}.
\]
  Therefore, $\deg\T=2v_\infty(\frac yz)=-2$. Note also that 
  $\Gamma(\mathcal{T})=\{0\}$ because $E_U\cap E_\infty=E\cap Q$ and $Q$ is a
  division algebra. Therefore, $\mathcal{T}$ is indecomposable because if
  $\mathcal{T}\simeq\VR_C(2m)\oplus\VR_C(2p)$ for some $m$,
  $p\in\mathbb{Z}$ 
  then comparing the degrees we see that $m+p=-1$. But then one of
  $m$, $p$ must be nonnegative, and then $\VR_C(2m)$ or $\VR_C(2p)$
  has nonzero global sections. Thus, we must have $\mathcal{T}\simeq
  \Indec_C(-2)$. 
\end{example}

Note that $Q$ acts naturally on the bundle $\T$, i.e., $\T$ is a
$Q$-module bundle, so we have a canonical embedding
$Q^{\operatorname{op}}\hookrightarrow\End\T$. In fact, since
$\T\simeq\Indec_C(-2)$ we have by Corollary~\ref{cor:dual} and
\eqref{eq:IndecIndec}
\[
\calEnd(\T)\simeq\T\otimes\T\spcheck\simeq \Indec_C(-2)\otimes\Indec_C(2)
\simeq\VR_C(0)^{\oplus4}.
\]
Therefore, $\dim\End\T=4$, hence
\begin{equation*}
  \End\T\simeq Q^{\operatorname{op}}\simeq Q.
\end{equation*}
Since $\Indec_C(2n)=\Indec_C(-2)\otimes\VR_C(n+1)$ for all odd $n$
(see~\eqref{eq:IndecO}), we also have
\begin{equation}
  \label{eq:EndIndec2}
  \End\bigl(\Indec_C(2n)\bigr)\simeq Q \qquad\text{for all odd $n$.}
\end{equation}

\subsection{Duality}

The \emph{dual} of a vector bundle $\E=(E,E_U,E_\infty)$ over $C$ is
the vector bundle
\[
\E\spcheck=(\Hom_L(E,L),\;\Hom_{\VR_U}(E_U,\VR_U),\:
\Hom_{\VR_\infty}(E_\infty,\VR_\infty)).
\]

\begin{prop}
  \label{prop:degdual}
  $\deg\E\spcheck = -\deg\E$.
\end{prop}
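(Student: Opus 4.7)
The plan is to compute $\deg\E\spcheck$ directly from the definition by choosing dual bases. Let $\E=(E,E_U,E_\infty)$ and pick an $\VR_U$-basis $(e_i)_{i=1}^n$ of $E_U$ and an $\VR_\infty$-basis $(f_i)_{i=1}^n$ of $E_\infty$, with change-of-basis matrix $g\in\GL_n(L)$ satisfying $e_j=\sum_i f_ig_{ij}$, so that $\deg\E=2v_\infty(\det g)$.

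The key observation is that the dual basis $(e_i^*)$ of $(e_i)$ (characterized by $e_i^*(e_j)=\delta_{ij}$) is an $\VR_U$-basis of $\Hom_{\VR_U}(E_U,\VR_U)$, hence also an $L$-basis of $\Hom_L(E,L)$, and similarly $(f_i^*)$ is an $\VR_\infty$-basis of $\Hom_{\VR_\infty}(E_\infty,\VR_\infty)$. I would then compute the change-of-basis matrix $h\in\GL_n(L)$ defined by $e_j^*=\sum_i f_i^*h_{ij}$ by evaluating both sides at $e_k$: one finds $\delta_{jk}=\sum_ih_{ij}g_{ik}$, i.e., $h^Tg=I$, so $h=(g^{-1})^T$, and in particular $\det h=(\det g)^{-1}$. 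Applying the definition of degree to $\E\spcheck$ then gives
\[
\deg\E\spcheck = 2v_\infty(\det h) = -2v_\infty(\det g) = -\deg\E.
\]

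There is essentially no obstacle here beyond bookkeeping; the only thing to be slightly careful about is confirming that a dual basis of a free module over a principal ideal domain really is what one expects, so that $(e_i^*)$ and $(f_i^*)$ qualify as the local trivializations used in the degree definition for $\E\spcheck$. Once that is observed, the proof reduces to the one-line matrix identity $\det((g^{-1})^T)=(\det g)^{-1}$ and the fact that $v_\infty$ is a valuation.
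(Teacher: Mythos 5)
Your proof is correct and is essentially the same as the paper's: both compute the change-of-basis matrix between the dual bases $(e_i^*)$ and $(f_i^*)$, find it to be $(g^t)^{-1}$, and conclude from $\det\bigl((g^t)^{-1}\bigr)=(\det g)^{-1}$ that the degree changes sign. No further comment is needed.
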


\begin{proof}
  Let $(e_i)_{i=1}^n$ be an $\VR_U$-base of $E_U$ and $(f_i)_{i=1}^n$
  be an $\VR_\infty$-base of $E_\infty$, and let
  $g=(g_{ij})_{i,j=1}^n\in\GL_n(L)$ be defined by the equations
  \[
  e_j=\sum_{i=1}^n f_ig_{ij}\qquad\text{for $j=1$, \ldots, $n$}.
  \]
  So, by definition, $\deg\E=2v_\infty(\det g)$. The dual bases
  $(e_i^*)_{i=1}^n$ and $(f_i^*)_{i=1}^n$ are bases of
  $\Hom_{\VR_U}(E_U,\VR_U)$ and
  $\Hom_{\VR_\infty}(E_\infty,\VR_\infty)$ respectively, and they are
  related by
  \[
  e_j^*=\sum_{i=1}^n f_i^*g'_{ij}\qquad\text{for $j=1$, \ldots, $n$},
  \]
  where the matrix $g'=(g'_{ij})_{i,j=1}^n$ is
  $(g^t)^{-1}$. Therefore, $\det g'=(\det g)^{-1}$ and $\deg\E\spcheck
  = - \deg\E$. 
\end{proof}

\begin{cor}
  \label{cor:dual}
  If $\E\simeq \VR_C(2k_1)\oplus\cdots \oplus\VR_C(2k_r) \oplus
  \Indec_C(4\ell_1+2) \oplus\cdots\oplus \Indec_C(4\ell_m+2)$
  for some $k_1$, \ldots, $k_r$, $\ell_1$, \ldots,
  $\ell_m\in\mathbb{Z}$, then 
  \[
  \E\spcheck \simeq\VR_C(-2k_1)\oplus\cdots \oplus\VR_C(-2k_r) \oplus
  \Indec_C(-4\ell_1-2) \oplus\cdots\oplus \Indec_C(-4\ell_m-2).
  \]
\end{cor}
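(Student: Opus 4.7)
The plan is to reduce to the case of an indecomposable summand, and then use the degree formula of Proposition~\ref{prop:degdual} together with the classification Theorem~\ref{thm:classbnd} to identify the dual. Duality is an additive contravariant functor on vector bundles (it commutes with finite direct sums, as can be checked locally), so it suffices to prove $\VR_C(2k)\spcheck\simeq\VR_C(-2k)$ and $\Indec_C(4\ell+2)\spcheck\simeq\Indec_C(-4\ell-2)$ separately; then combining the summand-by-summand isomorphisms yields the statement of the corollary.

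For the rank~$1$ summands $\VR_C(2k)$, the dual $\VR_C(2k)\spcheck$ is again a vector bundle of rank~$1$, and Proposition~\ref{prop:degdual} gives $\deg\VR_C(2k)\spcheck=-2k$. By the classification of rank~$1$ bundles over $C$ (every such bundle is isomorphic to some $\VR_C(2n)$, with $n$ determined by the degree), we conclude $\VR_C(2k)\spcheck\simeq\VR_C(-2k)$.

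For the rank~$2$ summand $\Indec_C(4\ell+2)$, the dual has rank~$2$ and degree $-(4\ell+2)$ by Proposition~\ref{prop:degdual}. Theorem~\ref{thm:classbnd} leaves only two possibilities: either $\Indec_C(4\ell+2)\spcheck\simeq\VR_C(2a)\oplus\VR_C(2b)$ with $a+b=-2\ell-1$, or $\Indec_C(4\ell+2)\spcheck\simeq\Indec_C(-4\ell-2)$. The second is what we want, so the main task is to rule out the first. For this, I would use that the natural evaluation map gives a canonical isomorphism $\E\spcheck\spcheck\simeq\E$ for every vector bundle over $C$ (which follows from the corresponding statement for finitely generated projective modules over $\VR_U$ and $\VR_\infty$, both of which are principal ideal domains, together with the isomorphism on generic fibers). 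Since duality is additive, if $\Indec_C(4\ell+2)\spcheck$ were decomposable, then applying duality a second time would exhibit $\Indec_C(4\ell+2)$ itself as a nontrivial direct sum, contradicting Corollary~\ref{cor:fO}.

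The main obstacle is the rank~$2$ case, where the decomposable possibility has to be excluded; the double-dual argument above is the natural way to do it, provided one verifies $\E\spcheck\spcheck\simeq\E$ (which is essentially formal from the local description of $\E$ used throughout the appendix). Everything else is a direct application of the degree formula and the uniqueness in the Krull--Schmidt decomposition of Theorem~\ref{thm:classbnd}.
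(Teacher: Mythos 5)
Your proof is correct and takes essentially the same route as the paper's: dualize summand by summand, compute rank and degree of each dual via Proposition~\ref{prop:degdual}, and identify it using the classification of (indecomposable) bundles over $C$. The paper simply asserts that $\Indec_C(4\ell+2)\spcheck$ is indecomposable of rank~$2$; your double-dual argument (valid since $E$, $E_U$, $E_\infty$ are finite free over $L$, $\VR_U$, $\VR_\infty$) just makes that implicit step explicit.
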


\begin{proof}
  $\VR_C(2k)\spcheck$ is a vector bundle of rank~$1$ and degree~$-2k$,
  hence $\VR(2k)\spcheck\simeq\VR_C(-2k)$. Similarly,
  $\Indec_C(4\ell+2)\spcheck$ is an indecomposable vector bundle of
  rank~$2$ and degree~$-4\ell-2$, hence $\Indec_C(4\ell+2)\spcheck
  \simeq \Indec_C(-4\ell-2)$.
\end{proof}


\begin{thebibliography}{99}

\bibitem{Ara}
J.K. Arason, Excellence of $F(\varphi )/F$ for 2-fold Pfister forms,
in {\it Conference on Quadratic Forms---1976 (Proc. Conf., Queen's
  Univ., Kingston, Ont., 1976)}, p.~492. Queen's Papers in Pure and
Appl. Math., 46, Queen's Univ., Kingston, ON.  

\bibitem{Bass}
H. Bass, \textit{Algebraic $K$-theory}, Benjamin, New York--Amsterdam,
1968. 

\bibitem{BN}
I. Biswas\ and\ D. S. Nagaraj, Vector bundles over a nondegenerate
conic, J. Aust. Math. Soc. {\bf 86} (2009), no.~2, 145--154. 

\bibitem{CTS}
J.-L. Colliot-Th\'el\`ene\ and\ R. Sujatha, Unramified Witt groups of
real anisotropic quadrics, in {\it $K$-theory and algebraic geometry:
  connections with quadratic forms and division algebras (Santa
  Barbara, CA, 1992)}, 127--147, Proc. Sympos. Pure Math., 58, Part 2,
Amer. Math. Soc., Providence, RI. 

\bibitem{Groth}
A. Grothendieck, Sur la classification des fibr\'es holomorphes sur la
sph\`ere de Riemann, Amer. J. Math. {\bf 79} (1957), 121--138.  

\bibitem{HM}
M. Hazewinkel\ and\ C. F. Martin, A short elementary proof of
Grothendieck's theorem on algebraic vectorbundles over the projective
line, J. Pure Appl. Algebra {\bf 25} (1982), no.~2, 207--211.

\bibitem{Kn}
M. Knebusch, Grothendieck- und Wittringe von nichtausgearteten
symmetrischen Bilinearformen, S.-B. Heidelberger
Akad. Wiss. Math.-Natur. Kl. {\bf 1969/70} (1969/1970), 93--157. 
 
\bibitem{KMRT}
M.-A. Knus\ et al., {\it The book of involutions}, Amer. Math. Soc.,
Providence, RI, 1998. 

\bibitem{Lag}
A. Laghribi, Certaines formes quadratiques de dimension au plus 6 et
corps des fonctions en caract\'eristique 2, Israel J. Math. {\bf 129}
(2002), 317--361. 

\bibitem{Lam}
T.Y. Lam, \textit{A first course in noncommutative rings}, Springer,
New York, 1991.

\bibitem{PSS}
R. Parimala, R. Sridharan\ and\ V. Suresh, Hermitian analogue of a
theorem of Springer, J. Algebra {\bf 243} (2001), no.~2, 780--789.  

\bibitem{P}
A. Pfister, Quadratic lattices in function fields of genus $0$,
Proc. London Math. Soc. (3) {\bf 66} (1993), no.~2, 257--278. 

\bibitem{R}
L.G.~Roberts, $K_1$ of a curve of genus zero,
\textit{Trans. Amer. Math. Soc.} \textbf{188} (1974), 319--326.

\bibitem{Rost}
M. Rost, On quadratic forms isotropic over the function field of a
conic, Math. Ann. {\bf 288} (1990), no.~3, 511--513. 

\bibitem{Tits68}
J. Tits, Formes quadratiques, groupes orthogonaux et alg\`ebres de
Clifford, \textit{Invent. Math.} {\bf 5} (1968), 19--41. 

\bibitem{VG}
J. Van Geel, Applications of the Riemann-Roch theorem for curves to
quadratic forms and division algebras, preprint, Univ. cath. Louvain,
1991. 

 
\end{thebibliography}
\end{document}